\definecolor{lightgray}{gray}{0.95}
\DeclareMathSymbol{\mrq}{\mathord}{operators}{`'}
\newcommand{\thickhline}{%
    \noalign {\ifnum 0=`}\fi \hrule height 1pt
    \futurelet \reserved@a \@xhline
}
\numberwithin{equation}{section}
\journalname{Soft Computing}
\begin{document}
\title{Generalized Hukuhara Hadamard Derivative of Interval-valued Functions and Its Applications 
	to Interval Optimization
}

\titlerunning{$gH$-Hadamard Derivative of Interval-valued Functions and Its Applications 
	to Interval Optimization}

\author{Ram Surat Chauhan \and Debdas Ghosh \and Qamrul Hasan Ansari}

\authorrunning{Chauhan, R. S. et al.}

\institute{Ram Surat Chauhan \at  \email{rschauhan.rs.mat16@itbhu.ac.in}\\
	Corresponding Author\\
             Department of mathematical sciences, Indian Institute of technology (BHU), Varanasi--221005, India
            \and
           Debdas Ghosh \at
           \email{debdas.mat@iitbhu.ac.in} \\
            Department of mathematical sciences, Indian Institute of technology (BHU), Varanasi--221005, India
             \and
           Qamrul Hasan Ansari\at
           \email{qhansari@gmail.com} \\
          Department of Mathematics, Aligarh Muslim University, Aligarh 202 002, India \\
          Department of Mathematics and Statistics, King Fahd University of Petroleum \& Minerals, Dhahran, Saudi Arabia
}

\date{Received: date / Accepted: date}

\maketitle

\begin{abstract}
  In this article, we study the notion of $gH$-Hadamard derivative for interval-valued functions (IVFs) 
  and its applications to interval optimization problems (IOPs). 
  It is shown that the existence of $gH$-Hadamard derivative implies 
  the existence of $gH$-Fr\'{e}chet derivative and vise-versa. 
  Further, it is proved that the existence of $gH$-Hadamard derivative implies the existence of 
  $gH$-continuity of IVFs. 
  We found that the composition of a Hadamard differentiable real-valued function and 
  a $gH$-Hadamard differentiable IVF is $gH$-Hadamard differentiable. 
  Further, for finite comparable IVF, we prove that the $gH$-Hadamard derivative of 
  the maximum of all finite comparable IVFs is the maximum of their $gH$-Hadamard derivative. 
  The proposed derivative is observed to be useful to check the convexity of an IVF and 
  to characterize efficient points of an optimization problem with IVF. 
  For a convex IVF, we prove that if at a point the $gH$-Hadamard derivative does not dominate to zero, 
  then the point is an efficient point. Further, it is proved that at an efficient point, 
  the $gH$-Hadamard derivative does not dominate zero and also contains zero. 
  For constraint IOPs, we prove an extended Karush-Kuhn-Tucker condition by using the proposed derivative. 
  The entire study is supported by suitable examples.
		\end{abstract}

		\keywords{Interval-valued functions \and Interval optimization problems \and 
			Efficient solutions \and $gH$-Hadamard derivative \and $gH$-Fr\'{e}chet derivative} 
		
		\subclass{26A24 \and 90C30 \and 65K05}


	
	\section{Introduction}\label{sec:introd}
	

	In the study of general behavior of a real-world problem, such as static or dynamic, deterministic or probabilistic, 
	linear or nonlinear, convex or nonconvex, etc., several mathematical tools have been developed. 
	In many cases, the knowledge about the underlying parameters, which influences the system's mathematical behavior, 
	is imprecise or uncertain. Generally, one cannot measure the parameters affected by imprecision or 
	uncertainties with exact values. In such situations, the parameters cannot be modeled by a real number. 
	We usually overcome this deficiency by using fuzzy sets, interval, or stochastic values. 
	Interval analysis is based on representing an uncertain variable as an interval, 
	which is a natural way of incorporating the uncertainties of parameters. 
	As mathematical functions play a crucial role in modeling realistic problems, 
	we analyze a special derivative of interval-valued functions (IVFs) in this article.
	
	Three important aspects of a function are monotonicity, convexity and differentiability. 
	In the study of monotonicity and convexity \cite{ansari2013generalized} of an IVF, 
	an appropriate ordering of intervals is the prime issue. 
	Unlike the real numbers, intervals are not linearly ordered. 
	Due to which the whole paradigm of analyzing an IVF changes and the development of 
	calculus for IVF is not just trivial extensions of the corresponding counterpart 
	for conventional real-valued functions.
	The same reason makes the development of optimization with IVFs difficult 
	since the very optimality notion requires an ordering of the function values.
	
	Most often optimization with IVFs \cite{Chalco2013-2,Ghosh2019derivative,Stefanini2009} 
	have been analyzed with respect to a partial ordering \cite{Ishibuchi1990}. 
	Some  researchers \cite{Bhurjee2016,Ghosh2017spc,Kumar2021} used ordering relations of intervals based 
	on the parametric comparison of intervals. 
	In \cite{Costa2015}, an ordering relation of intervals is defined by a bijective map 
	from the set of intervals to $\mathbb{R}^2$. 
	However, the ordering relations  of intervals \cite{Bhurjee2016,Ghosh2017spc,Costa2015} 
	can be derived from the relations described in \cite{Ishibuchi1990}. 
	Recently, Ghosh et al. \cite{ghosh2020ordering} investigated variable ordering relations 
	for intervals and used them in interval optimization problems (IOPs).
	
	To observe the properties of an IVF, calculus plays an essential role. 
	Initially, to develop the calculus of IVFs, Hukuhara  \cite{Hukuhara1967} introduced 
	the concept of differentiability of IVFs with the help of $H$-difference of intervals. 
	However, the definition of Hukuhara differentiability ($H$-differentiability) is found to be restrictive 
	(see \cite{Chalco2013-2}). To remove the deficiencies of $H$-differentiability, 
	Bede and Gal \cite{Bede2005} defined strongly generalized derivative ($G$-derivative) for IVFs and derived 
	a Newton-Leibnitz-type formula. In order to formulate the mean-value theorem for IVFs, 
	Markov \cite{Markov1979} introduced a new concept of difference of intervals and 
	defined differentiability of IVFs by using this difference. 
	In \cite{Stefanini2009}, Stefanini and Bede defined the generalized Hukuhara differentiability 
	($gH$-differentiability) of IVFs by using the concept of generalized Hukuhara difference. 
	In defining the calculus of IVFs, the concepts of $gH$-derivative, $gH$-partial derivative, $gH$-gradient, 
	and $gH$-differentiability for IVFs have been developed in \cite{Ghosh2016newton,Stefanini2009,Stefanini2019}.
	
	To derive a Karush-Kuhn-Tucker (KKT) condtion for IOPs, Guo et al.\ \cite{Guo2019} 
	defined $gH$-symmetric derivative for IVFs. Ghosh \cite{Ghosh2016} analyzed the notion 
	of $gH$-differentiability of multi-variable IVFs to propose the Newton method for IOPs. 
	The concept of second-order differentiability of IVFs is introduced by Van \cite{Van2015} 
	to study the existence of a unique solution of interval differential equations. 
	Lupulescu \cite{Lupulescu2013} defined delta generalized Hukuhara differentiability on time scales by 
	using $gH$-difference. Chalco et al.\ \cite{Chalco2011} introduced the concept of $\pi$-derivative for 
	IVFs that generalizes Hukuhara derivative and $G$-derivative, and proved that this derivative 
	is equivalent to $gH$-derivative. 
	In \cite{Stefanini2014}, Stefanini and Bede defined level-wise $gH$-differentiability and 
	generalized fuzzy differentiability by LU-parametric representation for fuzzy-valued functions. 
	Kalani et al.\ \cite{Kalani2016} analyzed the concept of interval-valued fuzzy derivative for 
	perfect and semi-perfect interval-valued fuzzy mappings to derive a method for solving interval-valued 
	fuzzy differential equations using the extension principle. 
	Recently, Ghosh et al.\ \cite{Ghosh2019derivative} and Chauhan et al.\ \cite{Chauhand2021} have provided the idea of 
	$gH$-directional derivative, $gH$-G\^{a}teaux derivative,  $gH$-Fr\'echet derivative, and $gH$-Clarke derivative of 
	IVFs to derive the optimality conditions for IOPs.

	Despite of many attempts to develop calculus for IVFs, the existing ideas are not adequate to retain two most 
	important features of classical differential calculus---linearity of the derivative with respect to the direction 
	and the chain rule. Although Ghosh et al.\ \cite{Ghosh2019derivative} proposed some optimality conditions for IOPs 
	by using $gH$-directional and $gH$-G\^{a}teaux derivatives, but these derivatives are not sufficient to preserve 
	the continuity of IVFs (see Example 5.1 of \cite{Ghosh2019derivative}) and chain rule for the composition of IVFs 
	(see Example \ref{ex31} of this article). Even though $gH$-Fr\'echet derivative in \cite{Ghosh2019derivative} 
	preserves linearity and continuity but it does not hold the chain rule for the composition of IVFs whose lower 
	and upper functions are equal at each points (see example for Proposition 3.5 \cite{Shapiro1990}). 
	With the help of the derivative of lower and upper functions, some articles \cite{wu-kkt-iop-2009,zhang2014kkt,Ren2017} 
	reported KKT condition to characterize efficient solutions of constraint IOPs. 
	However, the derivative used in \cite{wu-kkt-iop-2009,zhang2014kkt,Ren2017} are very restrictive because this 
	derivative is very difficult to calculate even for very simple IVF (see Example 1 of \cite{Cano2015}). 
	However, in this article, we derive KKT condition of constraint IOPs by $gH$-Hadamard derivative which 
	do not depend on the existence of the Hadamard derivative of lower and upper functions. 
	Also, proposed derivative retains the linearity of the derivative with respect to direction, 
	the existence of continuity as well as the chain rule of derivative.
	
	\subsection{Motivation and Contribution}
	
	In conventional nonsmooth optimization theory, one of the mostly used idea of derivative is Hadamard derivative 
	which is applied to characterize optimal solutions. 
	An explicit expression of the derivative of an extremum with respect to parameters can be obtained with the help of Hadamard derivative. So, it works well for most differentiable optimization problems including convex or concave problems. Correspondingly, in interval analysis and interval optimization, we expect to have a notion of the Hadamard derivative for interval-valued functions. In addition, from the literature on the analysis of IVFs, one can notice that the study of Hadamard derivative for IVFs have not been developed so far. However, the basic properties of this derivative might be beneficial for characterizing and capturing the optimal solutions of IOPs.
	
	In this article, we define $gH$-Hadamard derivative of IVFs. It is proved that if an IVF is $gH$-Hadamard differentiable, then IVF is $gH$-continuous. By using the proposed concept of $gH$-Hadamard derivative, we prove that a $gH$-Fr\'{e}chet differentiable IVF is $gH$-Hadamard differentiable and vise-versa. Further, we characterize the convexity of IVFs with the help of $gH$-Hadamard derivative. Besides, with the help of $gH$-Hadamard derivative, we provide a necessary and sufficient condition for characterizing the efficient solutions to IOPs. Further, for constraint IOPs, we derive the extended KKT necessary and sufficient condition to characterize the efficient solutions.
	
	\subsection{Delineation}
	
	The rest of the article is demonstrated in the following sequence. 
	The next section covers some basic terminologies and notions of convex analysis and interval analysis, 
	followed by the convexity and calculus of IVFs. 
	Also, a few properties of intervals, $gH$-directional and $gH$-Fr\'{e}chet derivatives of an IVF are discussed in Section \ref{section1}. In Section \ref{section3}, we define the $gH$-Hadamard derivative of IVF and observe that the existence of $gH$-Hadamard derivative implies the existence of $gH$-Fr\'{e}chet derivative and vise-versa. Further, it is found that the existence of $gH$-Hadamard derivative implies $gH$-continuity. In the same section, it is shown that proposed derivatives are useful to check the convexity of an IVF. In Section \ref{section5}, we prove that at a point, in which $gH$-Hadamard derivative does not dominate zero is an efficient point of an IVF. Further, it is observed that at an efficient point of IVF, $gH$-Hadamard derivative must contain zero. In Section \ref{section5d}, a few properties of the cone of descent direction and cone of feasible direction are given. Also, we prove the extended necessary and sufficient optimality condition for constraint IOPs in the same section.  Finally, the last section concludes and draws future scopes of the study.

	
	\section{\textbf{Preliminaries and Terminology}}\label{section1}
	

	This section is devoted to some basic notions on intervals. 
	Also, we present basic convexity and calculus of IVFs which will be used throughout the paper. 
	We also use the following notations.
	\begin{itemize}
		\item $\mathcal{X}$ is a  real normed linear space with the norm $\|\cdot\|$
		
		\item $\mathcal{S}$ is a nonempty subset of $\mathcal{X}$
		
		\item $\mathbb{R}$ denotes the set of real numbers
		
		\item $\mathbb{R}_{+}$ denotes the set of nonnegative real numbers
		
		\item $I(\mathbb{R})$ is the set of all compact intervals (that is, closed and bounded intervals)
		\item $I(\mathbb{R})^n$ is the set of vectors whose componants are compact intervals
	\end{itemize}

	\subsection{Arithmetic of Intervals and their Dominance Relation}\label{ssai}
	Throughout the article, we denote the elements of $I(\mathbb{R})$ by bold capital letters: ${\textbf A}, {\textbf B}, {\textbf C}, \ldots $. We represent an element $\textbf{A}$ of $I(\mathbb{R})$ in its interval form with the help of the corresponding small letter in the following way:
	\[ \textbf{A} = [\underline{a}, \overline{a}],~\text{where}~\underline{a}~\text{and}~\overline{a}~\text{are real numbers such that}~\underline{a} \leq \overline{a}.\]
	
	Let $\textbf{A}, \textbf{B} \in I(\mathbb{R})$ and $\lambda \in \mathbb{R}$. Moore's \cite{Moore1966,Moore1987} interval addition, subtraction, product, division and scalar multiplication are denoted by $\textbf{A} \oplus \textbf{B},~ \textbf{A} \ominus \textbf{B},~ \textbf{A} \odot \textbf{B},~ \textbf{A} \oslash \textbf{B}$, and $\lambda \odot \textbf{A}$, respectively. In defining $\textbf{A} \oslash \textbf{B}$, it is assumed that $0\notin \textbf{B}$.
	
	Since $\textbf{A}\ominus\textbf{A}\neq\textbf{0}$ for any nondegenerate interval (whose lower and upper limits are not equal) $\textbf{A}$, we use the following concept of difference of intervals in this article.
	
	\begin{definition}\cite{Stefanini2008} Let $\textbf{A}$ and $\textbf{B}$ be two elements of $I(\mathbb{R})$. The    \emph{$gH$-difference} between $\textbf{A}$ and $\textbf{B}$, denoted by     $\textbf{A} \ominus_{gH} \textbf{B}$, is defined by the interval             $\textbf{C}$ such that
		\[ \textbf{A} =  \textbf{B} \oplus  \textbf{C} \quad \text{ or } \quad \textbf{B} =     \textbf{A} \ominus \textbf{C}.\]
		It is to be noted that for $\textbf{A} = \left[\underline{a},~\overline{a}\right]$ and $\textbf{B} = \left[\underline{b},~\overline{b}\right]$,
		\[ \textbf{A} \ominus_{gH} \textbf{B} = \left[\min\{\underline{a}-\underline{b}, \overline{a} - \overline{b}\},~ \max\{\underline{a}-\underline{b}, \overline{a} - \overline{b}\}\right] \text{ and } \textbf{A} \ominus_{gH} \textbf{A} = \textbf{0}.\]
	\end{definition}
	
	In the following, we provide a domination relation for intervals based on a \emph{minimization} type optimization problems: a \emph{smaller value} is \emph{better}.
	
	\begin{definition}\cite{Ishibuchi1990} \label{interval_dominance} Let $\textbf{A} = [\underline{a}, \overline{a}]$ and $\textbf{B} = [\underline{b}, \overline{b}]$ be two intervals in $I(\mathbb{R})$.
		\begin{enumerate}[(i)]
			\item $\textbf{B}$ is said to be \emph{dominated by} $\textbf{A}$ if $\underline{a}~\leq~ \underline{b}$ and $\overline{a}~\leq~\overline{b}$, and then we write $\textbf{A}~\preceq~ \textbf{B}$.
			
			\item $\textbf{B}$ is said to be \emph{strictly dominated by} $\textbf{A}$ if either `$\underline{a} \leq \underline{b}$  and $\overline{a} < \overline{b}$' or `$\underline{a} < \underline{b}$  and $\overline{a} \leq \overline{b}$', and then we write $\textbf{A}\prec \textbf{B}$.
			
			\item If $\textbf{B}$ is not dominated by $\textbf{A}$, then we write $\textbf{A}\npreceq \textbf{B}$; if $\textbf{B}$ is not strictly dominated by $\textbf{A}$, then we write $\textbf{A}\nprec \textbf{B}$.
			
			\item If $\textbf{A}$ is dominated by $\textbf{B}$ or $\textbf{B}$ is dominated by $\textbf{A}$, then $\textbf{A}$ and $\textbf{B}$ are said to be \emph{comparable}.
			
			\item If $\textbf{A}\npreceq \textbf{B}$ and $\textbf{B}~\npreceq~ \textbf{A}$, then we say that \emph{none of $\textbf{A}$ and $\textbf{B}$ dominates the other}, or $\textbf{A}$ and $\textbf{B}$ are \emph{not comparable}.
		\end{enumerate}
	\end{definition}

	Notice that if $\textbf{B}$ is strictly dominated by $\textbf{A}$, then $\textbf{B}$ is dominated by $\textbf{A}$. Moreover, if $\textbf{B}$ is not dominated by $\textbf{A}$, then $\textbf{B}$ is not strictly dominated by $\textbf{A}$.

	\begin{lemma}\label{forfrechet} For $\textbf{A}~\text{and}~\textbf{B}~\text{in}~ I(\mathbb{R})$,
		\begin{enumerate}[\rm(i)]
			\item \label{31} If $\textbf{B} \nprec \textbf{0}$ and $\textbf{B} \preceq \textbf{A}$,  then $\textbf{A} \nprec\textbf{0} $,
			
			\item \label{32} If	$\textbf{A}\oplus \textbf{B} \nprec \textbf{0} $ and $\textbf{B}\preceq \textbf{0}$, then $\textbf{A}\nprec \textbf{0}$.
		\end{enumerate}
	\end{lemma}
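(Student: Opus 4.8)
The plan is to reduce both statements to elementary inequalities on the endpoints of the intervals, using the coordinate description of the dominance relations from Definition \ref{interval_dominance}. The crucial preliminary observation I would record is an explicit characterization of the relation $\nprec\textbf{0}$. Writing $\textbf{C}=[\underline{c},\overline{c}]$, the definition of $\prec$ gives $\textbf{C}\prec\textbf{0}$ precisely when $\underline{c}<0$ and $\overline{c}\leq 0$ (the disjunction in the definition collapses to this because the constraint $\underline{c}\leq\overline{c}$ rules out the spurious case $\underline{c}=0,\ \overline{c}<0$). Negating, I obtain
\[
  \textbf{C}\nprec\textbf{0}\quad\Longleftrightarrow\quad \underline{c}\geq 0 \ \text{ or }\ \overline{c}>0.
\]
Once this characterization is in hand, both parts of the lemma become short case analyses.

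For part \eqref{31}, I would start from $\textbf{B}\nprec\textbf{0}$ and split into the two cases supplied by the characterization. If $\underline{b}\geq 0$, then the hypothesis $\textbf{B}\preceq\textbf{A}$ (that is, $\underline{b}\leq\underline{a}$ and $\overline{b}\leq\overline{a}$) gives $\underline{a}\geq\underline{b}\geq 0$; if instead $\overline{b}>0$, the same hypothesis gives $\overline{a}\geq\overline{b}>0$. In either case one of the two conditions $\underline{a}\geq 0$ or $\overline{a}>0$ holds, so $\textbf{A}\nprec\textbf{0}$ by the characterization.

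Part \eqref{32} I expect to follow most cleanly by reducing it to part \eqref{31} rather than repeating the endpoint computation. Using Moore addition $\textbf{A}\oplus\textbf{B}=[\underline{a}+\underline{b},\ \overline{a}+\overline{b}]$, the hypothesis $\textbf{B}\preceq\textbf{0}$ (that is, $\underline{b}\leq 0$ and $\overline{b}\leq 0$) immediately yields $\underline{a}+\underline{b}\leq\underline{a}$ and $\overline{a}+\overline{b}\leq\overline{a}$, i.e.\ $\textbf{A}\oplus\textbf{B}\preceq\textbf{A}$. Since $\textbf{A}\oplus\textbf{B}\nprec\textbf{0}$ is assumed, applying part \eqref{31} with $\textbf{A}\oplus\textbf{B}$ playing the role of $\textbf{B}$ forces $\textbf{A}\nprec\textbf{0}$. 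The only place demanding care is the very first step---correctly unfolding the negation of the \emph{disjunctive} definition of $\prec$ into endpoint inequalities and using $\underline{c}\leq\overline{c}$ to simplify it; everything after that is a routine transfer of inequalities and poses no real difficulty.
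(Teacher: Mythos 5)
Your proof is correct, and while it shares the paper's overall strategy of reducing everything to endpoint inequalities, its execution differs in two respects, both to your advantage. First, the paper's own proof argues, in both parts, that $\textbf{B} \nprec \textbf{0}$ gives $\overline{b} \geq 0$, transfers this through the hypothesis to get $\overline{a} \geq 0$, and then concludes $\textbf{A} \nprec \textbf{0}$ from $\overline{a} \geq 0$ alone. That final implication is not valid for the relation $\prec$ of Definition \ref{interval_dominance}: the interval $\textbf{A} = [-1, 0]$ satisfies $\overline{a} = 0 \geq 0$ and yet $\textbf{A} \prec \textbf{0}$ (via the clause $\underline{a} < 0$, $\overline{a} \leq 0$). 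The test ``$\overline{a} \geq 0$ if and only if $\textbf{A}$ is not below $\textbf{0}$'' is correct only for the stronger relation $<$ of Definition \ref{better} (cf.\ Lemma \ref{rr}), not for $\prec$. Your characterization $\textbf{C} \nprec \textbf{0} \iff \left(\underline{c} \geq 0 \ \text{or}\ \overline{c} > 0\right)$ is the exact negation of the (collapsed) disjunctive definition, and your two-case transfer through $\preceq$ closes precisely the gap that the paper's argument leaves open. Second, your part (\ref{32}) proceeds by reduction to part (\ref{31}) via the observation $\textbf{A} \oplus \textbf{B} \preceq \textbf{A}$, whereas the paper repeats the endpoint computation from scratch; your route is cleaner and makes the common structure of the two parts explicit. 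In short: same spirit as the paper, but your version is the airtight one.
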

	
	\begin{proof}
		See \ref{nbe}
	\end{proof}

	\begin{definition}\cite{Chauhan2021}\label{better} Let $\textbf{A} = [\underline{a}, \bar{a}] = \{a(t) ~:~ a(t) = \underline{a} + t (\bar a - \underline{a}), 0\le t \le 1\}$ and  $\textbf{B} = [\underline{b}, \bar{b}] = \{b(t) ~:~ b(t) = \underline{b} + t (\bar b - \underline{b}), 0\le t \le 1\}$ be two elements of $I(\mathbb{R})$. Then, $\textbf{B}$ is said to be \emph{better strictly dominated by} by $\textbf{A}$ if $a(t) < b(t)$ for all $t \in [0, 1]$, and then we write $\textbf{A} < \textbf{B}$.
	\end{definition}

	\begin{lemma}\textnormal{\cite{Chauhan2021}} \label{rr} Let                      $\textbf{A}=[\underline{a}, \overline{a}]$ and $\textbf{B}=[\underline{b},   \overline{b}]$ be in $I(\mathbb{R})$. Then $\textbf{A}<\textbf{B}$   if and only if $\underline{a}<\underline{b}$ and                            $\overline{a}<\overline{b}$.
	\end{lemma}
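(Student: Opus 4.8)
The plan is to argue both implications directly from Definition~\ref{better}, exploiting the fact that the parametric gap $t \mapsto b(t) - a(t)$ is an affine function of $t$. Since the claim is a biconditional, I would split it into the necessity ($\textbf{A} < \textbf{B} \Rightarrow \underline{a} < \underline{b}$ and $\overline{a} < \overline{b}$) and the sufficiency (the converse).

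For the necessity, suppose $\textbf{A} < \textbf{B}$, so that $a(t) < b(t)$ for every $t \in [0,1]$. The idea is simply to specialize to the two endpoints of the parameter interval. At $t = 0$ the representation gives $a(0) = \underline{a}$ and $b(0) = \underline{b}$, whence $\underline{a} < \underline{b}$; at $t = 1$ it gives $a(1) = \overline{a}$ and $b(1) = \overline{b}$, whence $\overline{a} < \overline{b}$. This direction requires nothing beyond evaluating the affine maps at $t=0$ and $t=1$.

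For the sufficiency, assume $\underline{a} < \underline{b}$ and $\overline{a} < \overline{b}$. The key step is to rewrite the gap between the two parametrizations as a convex combination of the endpoint gaps:
\begin{align*}
b(t) - a(t) &= \left(\underline{b} + t(\overline{b} - \underline{b})\right) - \left(\underline{a} + t(\overline{a} - \underline{a})\right)\\
&= (1-t)(\underline{b} - \underline{a}) + t(\overline{b} - \overline{a}).
\end{align*}
Since both $\underline{b} - \underline{a}$ and $\overline{b} - \overline{a}$ are strictly positive, while the coefficients $(1-t)$ and $t$ are nonnegative and sum to one, the right-hand side is strictly positive for every $t \in [0,1]$. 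Hence $a(t) < b(t)$ throughout $[0,1]$, which is exactly $\textbf{A} < \textbf{B}$ by Definition~\ref{better}.

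I do not anticipate any genuine obstacle here. The only thing worth noticing is that an affine function of $t$ that is positive at both $t=0$ and $t=1$ remains positive on the whole segment, and this is immediate once the gap is expressed as the convex-combination above. The argument is elementary and self-contained given the parametric form introduced in Definition~\ref{better}.
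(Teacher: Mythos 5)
Your proof is correct. Note that the paper itself offers no proof of this lemma---it is quoted directly from the reference \cite{Chauhan2021}---so there is nothing internal to compare against; your argument (evaluating the parametrizations at $t=0$ and $t=1$ for necessity, and writing the gap $b(t)-a(t)$ as the convex combination $(1-t)(\underline{b}-\underline{a})+t(\overline{b}-\overline{a})$ for sufficiency) is the natural, self-contained justification one would expect, and it is exactly what Definition~\ref{better} calls for.
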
	
	
	\begin{definition}\cite{Moore1966}\label{irnorm} A       function ${\lVert \cdot \rVert}_{I(\mathbb{R})} : I(\mathbb{R}) \rightarrow   \mathbb{R}_{+}$ defined by
		\[ {\lVert \textbf{A} \rVert}_{I(\mathbb{R})}  = \max \{|\underline{a}|,     |\bar{a}|\},~~~~\text{for all}~\textbf{A}= [\underline{a}, \overline{a}] \in I(\mathbb{R}),\]
		is called a \emph{norm on} $I(\mathbb{R})$.
	\end{definition}
	
	\begin{definition}\label{max} For two    comparable intervals $\textbf{A}$ and $\textbf{B}$ of $I(\mathbb{R})$ with   $\textbf{A}\preceq \textbf{B}$, their \emph{maximum} is $\max\{\textbf{A},   \textbf{B}\} = \textbf{B}.$
	\end{definition}

	\subsection{Convexity and Calculus of IVFs}
	A function $\textbf{F} : \mathcal{S} \to I(\mathbb{R})$ is known as an IVF. For each $x \in \mathcal{S}$, $\textbf{F}$ can be presented by the following interval:
	\[\textbf{F}(x)=\left[\underline{f}(x),\ \overline{f}(x)\right],\]
	where $\underline{f}$ and $\overline{f}$ are real-valued functions on $\mathcal{S}$ such that $\underline{f}(x) \leq \overline{f}(x)$ for all $x \in \mathcal{S}$. Also, $\textbf{F}$ is said to be degenerate IVF if $\underline{f}(x) =\overline{f}(x)$ for all $x\in \mathcal{S}$. \\

	If $\mathcal{S}$ is convex, then the IVF $\textbf{F}$ is said to be convex \cite{Wu2007} on $\mathcal{S}$ if for any $x_1$, $x_2\in\mathcal{S}$,
	\[ \textbf{F}(\lambda_1 x_1+\lambda_2 x_2)\preceq
	\lambda_1\odot\textbf{F}(x_1)\oplus\lambda_2\odot\textbf{F}(x_2) \text{ for all } \lambda_1,\lambda_2\in[0, 1] \text{ with } \lambda_1+\lambda_2=1.\]
	The IVF $\textbf{F} : \mathcal{S} \rightarrow I(R)$ is said to be \emph{$gH$-continuous} \cite{Ghosh2016newton} at a point $\bar{x}$ of $\mathcal{S}$ if
	\[
	\lim_{\substack{\lVert d \rVert\rightarrow 0 \\  \bar{x}+d \in \mathcal{S}}}\left(\textbf{F}(\bar{x}+d)\ominus_{gH}\textbf{F}(\bar{x})\right)=\textbf{0}.
	\]
	If $\textbf{F}$ is $gH$-continuous at each point $x$ in $\mathcal{S}$, then $\textbf{F}$ is said to be $gH$-continuous on $\mathcal{S}$.
	
	\begin{lemma}\label{lc1} \textnormal{\cite{Wu2007}} $\textbf{F}$ is a convex IVF on   a convex set $\mathcal{S} \subseteq \mathcal{X}$ if and only if             $\underline{f}$ and $\overline{f}$ are convex on $\mathcal{S}$.
	\end{lemma}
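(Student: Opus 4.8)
The plan is to reduce the single interval-valued convexity inequality to the pair of scalar convexity inequalities for the endpoint functions $\underline{f}$ and $\overline{f}$, by directly unfolding the interval arithmetic and the dominance relation $\preceq$. First I would fix arbitrary $x_1, x_2 \in \mathcal{S}$ and $\lambda_1, \lambda_2 \in [0,1]$ with $\lambda_1 + \lambda_2 = 1$, and rewrite both sides of the defining inequality $\textbf{F}(\lambda_1 x_1 + \lambda_2 x_2) \preceq \lambda_1 \odot \textbf{F}(x_1) \oplus \lambda_2 \odot \textbf{F}(x_2)$ in endpoint form.

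For the right-hand side, I would use that $\lambda_1, \lambda_2 \ge 0$, so Moore's scalar multiplication gives $\lambda_i \odot \textbf{F}(x_i) = [\lambda_i \underline{f}(x_i),\, \lambda_i \overline{f}(x_i)]$, the nonnegativity being exactly what keeps the endpoints in order. Interval addition then yields
\[
\lambda_1 \odot \textbf{F}(x_1) \oplus \lambda_2 \odot \textbf{F}(x_2) = \left[\lambda_1 \underline{f}(x_1) + \lambda_2 \underline{f}(x_2),\; \lambda_1 \overline{f}(x_1) + \lambda_2 \overline{f}(x_2)\right],
\]
while the left-hand side is simply $\left[\underline{f}(\lambda_1 x_1 + \lambda_2 x_2),\; \overline{f}(\lambda_1 x_1 + \lambda_2 x_2)\right]$.

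Next I would invoke Definition \ref{interval_dominance}(i), according to which $\textbf{A} \preceq \textbf{B}$ holds precisely when $\underline{a} \le \underline{b}$ and $\overline{a} \le \overline{b}$. Applied to the two intervals above, this shows that the convexity inequality for $\textbf{F}$ is equivalent to the conjunction of the two scalar inequalities
\[
\underline{f}(\lambda_1 x_1 + \lambda_2 x_2) \le \lambda_1 \underline{f}(x_1) + \lambda_2 \underline{f}(x_2) \quad\text{and}\quad \overline{f}(\lambda_1 x_1 + \lambda_2 x_2) \le \lambda_1 \overline{f}(x_1) + \lambda_2 \overline{f}(x_2).
\]
Since $x_1, x_2, \lambda_1, \lambda_2$ were arbitrary admissible data, this equivalence holds universally, and the two displayed inequalities are exactly the statements that $\underline{f}$ and $\overline{f}$ are convex on $\mathcal{S}$. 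Reading the equivalence from left to right yields that convexity of $\textbf{F}$ forces convexity of both endpoint functions, and reading it from right to left yields the converse, so both directions are settled simultaneously.

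There is essentially no hard step here; the only point requiring genuine care is the observation that nonnegativity of $\lambda_1, \lambda_2$ is what makes the endpoint formula for $\lambda_i \odot \textbf{F}(x_i)$ valid, so that the lower (respectively upper) endpoint of the right-hand interval is governed by $\underline{f}$ (respectively $\overline{f}$) alone and the inequality decouples cleanly into two independent componentwise conditions. I would therefore state this nonnegativity explicitly when expanding the scalar multiples, and note that no cross-coupling between $\underline{f}$ and $\overline{f}$ arises precisely because the convex combination coefficients lie in $[0,1]$.
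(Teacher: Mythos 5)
Your proof is correct: since $\lambda_1,\lambda_2 \ge 0$, the identities $\lambda_i \odot \textbf{F}(x_i) = [\lambda_i \underline{f}(x_i), \lambda_i \overline{f}(x_i)]$ and the endpoint-wise characterization of $\preceq$ make the interval inequality decouple exactly into the two scalar convexity inequalities, which settles both directions at once. The paper itself gives no proof of this lemma (it is quoted from Wu's 2007 paper), and your direct unfolding of the definitions is essentially the same argument as in that source, so there is nothing further to reconcile.
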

	
	\begin{definition} \cite{Ghosh2019derivative} \label{linear}
		Let $\mathcal{S}$ be a linear subspace of $\mathcal{X}$. The function $\textbf{F}: \mathcal{S} \rightarrow I(\mathbb{R})$ is said to be \emph{linear} if
		\begin{enumerate}[(i)]
			\item $\textbf{F}(\lambda x)=\lambda\odot\textbf{F}(x)~ \text{for all}~x\in \mathcal{S}~\text{and all}~\lambda \in \mathbb{R}$, and
			
			\item  for all $x,~y\in \mathcal{S}$,
			\[\textquoteleft \text{either }\textbf{F}(x)\oplus\textbf{F}(y) = \textbf{F}(x+y) \mrq \quad \text{ or } \quad ~\textquoteleft \text{none of }\textbf{F}(x)\oplus\textbf{F}(y) \text{ and } \textbf{F}(x+y) \text{ dominates the other \textquoteright. } \]
		\end{enumerate}
	\end{definition}

	\begin{lemma}\label{nb} Let $\mathcal{S}$ be a linear subspace of $\mathcal{X}$ and $\textbf{F}: \mathcal{S} \to I(\mathbb{R})$ be a linear IVF. Then, the following results hold.
		\begin{enumerate}[\rm(i)]
			\item \label{nb1} If $\textbf{F}(x) \nprec \textbf{0}$ for all $x\in \mathcal{S}$, then $\textbf{0}$ and $\textbf{F}(x)$ are not comparable.
			
			\item \label{nb2} If $\textbf{F}(x) \preceq \textbf{0}$ for all $x\in \mathcal{S}$, then $\textbf{F}(x)= \textbf{0}$.
		\end{enumerate}
	\end{lemma}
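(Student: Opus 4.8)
The plan is to prove both parts using only the homogeneity clause of Definition \ref{linear}, namely $\textbf{F}(\lambda x)=\lambda\odot\textbf{F}(x)$, together with the hypothesis that $\mathcal{S}$ is a subspace, so that $x\in\mathcal{S}$ implies $-x\in\mathcal{S}$. Writing $\textbf{F}(x)=[\underline{f}(x),\overline{f}(x)]$ and taking $\lambda=-1$, homogeneity gives the reflection $\textbf{F}(-x)=(-1)\odot\textbf{F}(x)=[-\overline{f}(x),-\underline{f}(x)]$. This ``flip about $\textbf{0}$'' is the device that lets me play the hypothesis at $x$ against the hypothesis at $-x$; notice that the additivity clause (ii) of Definition \ref{linear} will not be needed anywhere.

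For part \eqref{nb2} I would first translate the two relevant instances of the hypothesis into endpoint inequalities. From $\textbf{F}(x)\preceq\textbf{0}$, Definition \ref{interval_dominance} gives $\underline{f}(x)\le 0$ and $\overline{f}(x)\le 0$. Applying the same hypothesis to $-x\in\mathcal{S}$ and using $\textbf{F}(-x)=[-\overline{f}(x),-\underline{f}(x)]$ yields $-\overline{f}(x)\le 0$ and $-\underline{f}(x)\le 0$, i.e.\ $\underline{f}(x)\ge 0$ and $\overline{f}(x)\ge 0$. Combining the two sets of inequalities forces $\underline{f}(x)=\overline{f}(x)=0$, that is $\textbf{F}(x)=\textbf{0}$.

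For part \eqref{nb1} I would argue by contradiction, after recording the endpoint description of strict dominance: from Definition \ref{interval_dominance}(ii) and $\underline{f}(x)\le\overline{f}(x)$ one checks that $\textbf{F}(x)\prec\textbf{0}$ holds precisely when $\overline{f}(x)\le 0$ and $\textbf{F}(x)\neq\textbf{0}$. Now suppose, for some $x$ with $\textbf{F}(x)\neq\textbf{0}$, that $\textbf{0}$ and $\textbf{F}(x)$ are comparable; then either $\textbf{F}(x)\preceq\textbf{0}$ or $\textbf{0}\preceq\textbf{F}(x)$. In the first case $\overline{f}(x)\le 0$ with $\textbf{F}(x)\neq\textbf{0}$, so $\textbf{F}(x)\prec\textbf{0}$, contradicting the hypothesis at $x$. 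In the second case $\underline{f}(x)\ge 0$ and $\overline{f}(x)\ge 0$, whence the reflection gives $\textbf{F}(-x)=[-\overline{f}(x),-\underline{f}(x)]\preceq\textbf{0}$ with $\textbf{F}(-x)\neq\textbf{0}$, so $\textbf{F}(-x)\prec\textbf{0}$, contradicting the hypothesis at $-x\in\mathcal{S}$. Hence $\textbf{0}$ and $\textbf{F}(x)$ cannot be comparable.

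I expect the only real subtlety to be bookkeeping at the boundary, i.e.\ the degenerate instance $\textbf{F}(x)=\textbf{0}$ (which occurs at least at $x=0$, since homogeneity forces $\textbf{F}(0)=\textbf{0}$). There $\textbf{0}$ is trivially comparable to itself, so part \eqref{nb1} should be read for the nondegenerate values $\textbf{F}(x)\neq\textbf{0}$, and the contradiction argument above is exactly what isolates this case. The single genuine ``idea'' is therefore the reflection $x\mapsto -x$ furnished by the subspace hypothesis, which converts an instance of $\textbf{0}\preceq\textbf{F}(x)$ into an instance of $\textbf{F}(-x)\prec\textbf{0}$ that the assumption rules out.
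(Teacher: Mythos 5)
Your proof is correct and takes essentially the same route as the paper's: both parts hinge on the reflection $\textbf{F}(-x)=(-1)\odot\textbf{F}(x)$ furnished by homogeneity together with the subspace hypothesis, played against the assumption at $x$ and at $-x$. If anything, you are more careful than the paper, whose proof of part (i) passes from $\textbf{F}(x)\nprec\textbf{0}$ and $\textbf{F}(x)\nsucc\textbf{0}$ directly to ``not comparable'' --- a step that requires the non-strict relations $\npreceq$ and fails precisely at the degenerate values $\textbf{F}(x)=\textbf{0}$ (e.g.\ at $x=0$), the boundary case you explicitly isolate and handle.
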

	
	\begin{proof}
		See \ref{anb}
	\end{proof}

	\begin{definition}\label{efficient_point_def}\cite{Ghosh2019derivative} Let $\mathcal{S}$ be a nonempty subset of $\mathbb{R}^n$ and $\textbf{F}: \mathbb{R}^n \rightarrow I(\mathbb{R})$ be an IVF. A point $\bar{x}\in \mathcal{S}$ is said to be an \emph{efficient point} of the IOP
		\begin{equation}\label{IOP} \displaystyle \min_{x \in \mathcal{S}\subset \mathbb{R}^n} \textbf{F}(x)
		\end{equation}
		if $\textbf{F}(x)\nprec\textbf{F}(\bar{x})$ for all $x\in \mathcal{S}.$
	\end{definition}

	\begin{definition}\label{ddd} \cite{Delfour2012} Let $f$ be a real-valued function on a nonempty subset $\mathcal{S}$ of $\mathcal{X}$. Let $\bar{x} \in \mathcal{S}$ and $h \in \mathcal{X}$. If the limit
		\[
		\lim_{\lambda \to 0+}\frac{1}{\lambda} \left(f(\bar{x}+\lambda h)- f(\bar{x})\right)
		\]
		exists finitely, then the limit is said to be \emph{directional derivative} of $f$ at $\bar{x}$ in the direction $h$, and it is denoted by $f_\mathscr{D}(\bar{x})(h)$.
	\end{definition}

	\begin{definition}\label{frechet_derivative} \cite{Ghosh2019derivative} Let $\mathcal{S}$ be a nonempty open subset of $\mathcal{X}$, $\textbf{F}: \mathcal{S} \rightarrow I(\mathbb{R})$ be an IVF and $\bar{x} \in \mathcal{S}$. Suppose that there exists a $gH$-continuous and linear mapping $\textbf{G}:\mathcal{X}\rightarrow I(\mathbb{R})$ with the following property
		\[\lim_{{\lVert h \rVert} \to 0} \frac{{\lVert \textbf{F}(\bar{x}+h) \ominus_{gH} \textbf{F}(\bar{x}) \ominus_{gH} \textbf{G}(h) \rVert}_{I(\mathbb{R})}} {{\lVert h \rVert} } = 0,
		\]
		then $\textbf{G}$ is said to be  \emph{$gH$-Fr\'{e}chet derivative} of $\textbf{F}$ at $\bar{x}$, and we write $\textbf{G}=\textbf{F}_\mathscr{F}(\bar{x})$.
	\end{definition}


	\section{\textbf{$gH$-Hadamard Derivative of Interval-valued Functions}} \label{section3}
	
	In this section, we present the concept of  $gH$-Hadamard derivative for IVFs. It is worth noting that if an IVF $\mathbf{F}$ has a $gH$-Hadamard derivative at $\bar{x}$, then $\mathbf{F}$ must be  $gH$-continuous at $\bar{x}$.
	
	\begin{definition}\label{derivative}
		Let $\textbf{F}$ be an IVF on a nonempty subset $\mathcal{S}$ of $\mathcal{X}$. For   $\bar{x}\in \mathcal{S}$ and $v \in \mathcal{X}$, if the limit
		\[ \textbf{F}_\mathscr{H}(\bar{x})(v)\coloneqq
		\lim_{\substack{%
				\lambda \to 0+\\
				h \to v}}\frac{1}{\lambda}\odot\left(\textbf{F}(\bar{x}+\lambda h)\ominus_{gH}\textbf{F}(\bar{x})\right)
		\]
		exists and $\textbf{F}_\mathscr{H}(\bar{x})$ is a linear IVF from $\mathcal{X}$ to $I(\mathbb{R})$, then $\textbf{F}_\mathscr{H}(\bar{x})(v)$ is called\emph{ $gH$-Hadamard derivative} of $\textbf{F}$ at $\bar{x}$ in the direction $v$. If this limit exists for all $v \in \mathcal{X}$, then $\textbf{F}$ is said to be \emph{$gH$-Hadamard differentiable} at $\bar{x}$.
	\end{definition}
	
	\begin{remark}\label{re1} The limit $\textbf{F}_\mathscr{H}(\bar{x})(v)$ exists if for all sequences $\{\lambda_n\}$ and $\{h_n\}$ with $\lambda_n>0$ for all $n$ such that $\lim_{n\to \infty} \lambda_n = 0$, $\lim_{n\to \infty} h_n = v$,
		\[\lim_{n \to \infty} \frac{1}{\lambda_n}\odot\left(\textbf{F}(\bar{x}+\lambda_n h_n)\ominus_{gH}\textbf{F}(\bar{x})\right) \text{ exists and the limit value is a linear IVF on}~ \mathcal{S}.\]
	\end{remark}
	
	\begin{example}\label{ee1}
		Let $\mathcal{S}=\mathcal{X}=\mathbb{R}^n$ and consider the IVF $\textbf{F}(x) = {\lVert x \rVert}^2 \odot \textbf{C}$~\text{for all}~ $x \in \mathbb{R}^n$, where $\textbf{C} \in I(\mathbb{R})$. Then we calculate the $gH$-Hadamard derivative at $\bar{x}=0$ for $\textbf{F}$.\\
		For any $\bar{x}\in \mathcal{S}$ and $v \in \mathcal{X}$, we see that
		\begin{eqnarray*}\label{0_1}
			\lim_{\substack{%
					\lambda \to 0+\\
					h \to v}}\frac{1}{\lambda}\odot\left(\textbf{F}(\bar{x}+\lambda h)\ominus_{gH}\textbf{F}(\bar{x})\right)
			&=& \lim_{\substack{%
					\lambda \to 0+\\
					h \to v}}\frac{1}{\lambda}\odot \left({\lVert \bar{x}+\lambda h \rVert}^2 \odot \textbf{C}\ominus_{gH}{\lVert \bar{x} \rVert}^2 \odot \textbf{C}\right)\\
			&=& \lim_{\substack{%
					\lambda \to 0+\\
					h \to v}}\frac{1}{\lambda}\odot \left(\left( 2\bar{x}^{\top}(\lambda h) +{\lVert \lambda h \rVert}^2\right) \odot \textbf{C}\right)\\
			&=& 2\bar{x}^\top v \odot \textbf{C},~\text{by $gH$-continuity of}~\bar{x}^{\top}h \odot \textbf{C}.
		\end{eqnarray*}
		Hence, $\textbf{F}_\mathscr{H}(\bar{x})(v) = 2\bar{x}^{\top}v \odot \textbf{C}$ and $\textbf{F}_\mathscr{H}(\bar{x})$ is a linear IVF from $\mathcal{X}$ to $I(\mathbb{R})$. Therefore, $\textbf{F}$ is $gH$-Hadamard differentiable at $\bar{x}$ with $\textbf{F}_\mathscr{H}(\bar{x})(v)=2\bar{x}^{\top}v \odot \textbf{C}$ .
	\end{example}


	\begin{theorem}\label{th31}
		Let $\mathcal{X}=\mathbb{R}^n$, $\mathcal{S}$ be a nonempty subset of $\mathcal{X}$, $\textbf{F}$ be an IVF on $\mathcal{S}$ and $\bar{x}\in \mathcal{S}$. Then the following statements are equivalent:	\begin{enumerate}[\rm(i)]
			\item\label{th1} $\textbf{F}$ is $gH$-Fr\'{e}chet differentiable at $\bar{x}$.
			
			\item\label{th2}$\textbf{F}$ is $gH$-Hadamard differentiable at $\bar{x}$.
		\end{enumerate}
	\end{theorem}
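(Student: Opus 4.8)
The plan is to show that whichever derivative exists can serve as the other one: if $\textbf{F}$ is $gH$-Fr\'echet differentiable I will verify that its Fr\'echet derivative $\textbf{G}=\textbf{F}_\mathscr{F}(\bar{x})$ also realizes the $gH$-Hadamard limit, and conversely that the linear map $\textbf{F}_\mathscr{H}(\bar{x})$ satisfies the Fr\'echet condition. Throughout I will use three elementary facts about the $gH$-operations: for $\lambda>0$, $\lambda\odot(\textbf{A}\ominus_{gH}\textbf{B})=(\lambda\odot\textbf{A})\ominus_{gH}(\lambda\odot\textbf{B})$; the norm is positively homogeneous, $\lVert\lambda\odot\textbf{A}\rVert_{I(\mathbb{R})}=\lambda\lVert\textbf{A}\rVert_{I(\mathbb{R})}$; and $d(\textbf{A},\textbf{B}):=\lVert\textbf{A}\ominus_{gH}\textbf{B}\rVert_{I(\mathbb{R})}$ obeys the triangle inequality. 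Linearity of a derivative $\textbf{G}$ gives $\textbf{G}(\lambda v)=\lambda\odot\textbf{G}(v)$, and its $gH$-continuity gives $\textbf{G}(h_n)\to\textbf{G}(v)$ whenever $h_n\to v$.

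For $\text{(\ref{th1})}\Rightarrow\text{(\ref{th2})}$, let $\textbf{G}=\textbf{F}_\mathscr{F}(\bar{x})$ and take arbitrary sequences $\lambda_n\to 0+$ and $h_n\to v$. Since $\lVert\lambda_n h_n\rVert=\lambda_n\lVert h_n\rVert\to 0$, substituting $h=\lambda_n h_n$ into the Fr\'echet quotient and multiplying by the bounded factor $\lVert h_n\rVert$ gives $\tfrac{1}{\lambda_n}\lVert\textbf{F}(\bar{x}+\lambda_n h_n)\ominus_{gH}\textbf{F}(\bar{x})\ominus_{gH}\textbf{G}(\lambda_n h_n)\rVert_{I(\mathbb{R})}\to 0$. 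Using the homogeneity facts together with $\textbf{G}(\lambda_n h_n)=\lambda_n\odot\textbf{G}(h_n)$, this says exactly that $\tfrac{1}{\lambda_n}\odot\left(\textbf{F}(\bar{x}+\lambda_n h_n)\ominus_{gH}\textbf{F}(\bar{x})\right)\ominus_{gH}\textbf{G}(h_n)\to\textbf{0}$. Since $\textbf{G}$ is $gH$-continuous and linear, $\textbf{G}(h_n)\to\textbf{G}(v)$, so a triangle-inequality estimate for $d$ yields $\tfrac{1}{\lambda_n}\odot\left(\textbf{F}(\bar{x}+\lambda_n h_n)\ominus_{gH}\textbf{F}(\bar{x})\right)\to\textbf{G}(v)$. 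As the sequences were arbitrary (Remark \ref{re1}), the Hadamard limit exists and equals the linear map $\textbf{G}$; hence $\textbf{F}$ is $gH$-Hadamard differentiable.

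For $\text{(\ref{th2})}\Rightarrow\text{(\ref{th1})}$, write $\textbf{G}=\textbf{F}_\mathscr{H}(\bar{x})$, which is linear by Definition \ref{derivative}. Arguing by contradiction, suppose the Fr\'echet limit with this $\textbf{G}$ fails; then there exist $\varepsilon>0$ and $h_k\to 0$ in $\mathbb{R}^n$ with $h_k\neq 0$ such that $\lVert\textbf{F}(\bar{x}+h_k)\ominus_{gH}\textbf{F}(\bar{x})\ominus_{gH}\textbf{G}(h_k)\rVert_{I(\mathbb{R})}/\lVert h_k\rVert\ge\varepsilon$ for all $k$. Put $\lambda_k=\lVert h_k\rVert\to 0+$ and $u_k=h_k/\lVert h_k\rVert$, so that $\lVert u_k\rVert=1$. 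Because $\mathcal{X}=\mathbb{R}^n$, the unit sphere is compact, so along a subsequence $u_{k_j}\to v$ with $\lVert v\rVert=1$. Rewriting the quotient by homogeneity exactly as in the first part turns it into $\lVert\tfrac{1}{\lambda_{k_j}}\odot(\textbf{F}(\bar{x}+\lambda_{k_j}u_{k_j})\ominus_{gH}\textbf{F}(\bar{x}))\ominus_{gH}\textbf{G}(u_{k_j})\rVert_{I(\mathbb{R})}$. The inner difference quotient converges to $\textbf{F}_\mathscr{H}(\bar{x})(v)=\textbf{G}(v)$ by the sequential form of the Hadamard derivative (Remark \ref{re1}), while $\textbf{G}(u_{k_j})\to\textbf{G}(v)$ by $gH$-continuity; hence, by the triangle inequality, the whole quotient tends to $\lVert\textbf{G}(v)\ominus_{gH}\textbf{G}(v)\rVert_{I(\mathbb{R})}=0$, contradicting the lower bound $\varepsilon$. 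Therefore the Fr\'echet limit holds with $\textbf{G}=\textbf{F}_\mathscr{H}(\bar{x})$.

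The main obstacle is the direction $\text{(\ref{th2})}\Rightarrow\text{(\ref{th1})}$: the Hadamard hypothesis only controls difference quotients along convergent directions, whereas the Fr\'echet quotient must be controlled as $h\to 0$ in an arbitrary manner. The device that bridges this gap is normalizing $h_k$ and extracting a convergent subsequence of directions $u_{k_j}$, which is precisely where the finite-dimensionality $\mathcal{X}=\mathbb{R}^n$ is indispensable. A secondary point to settle is that $\textbf{F}_\mathscr{H}(\bar{x})$ is $gH$-continuous, so that it legitimately qualifies as a $gH$-Fr\'echet derivative; I would record this as a short preliminary step, deducing it from the joint $(\lambda,h)$-limit in Definition \ref{derivative} together with positive homogeneity, and then invoke it in the contradiction argument above.
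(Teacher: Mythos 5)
Your proof is correct and follows essentially the same route as the paper's: for (i)$\Rightarrow$(ii) you factor out $\lambda$ by linearity and use the $gH$-continuity of $\textbf{G}$, and for (ii)$\Rightarrow$(i) you normalize $h$, use compactness of the unit sphere in $\mathbb{R}^n$ to extract a convergent subsequence of directions, and let the Hadamard limit annihilate the Fr\'echet quotient. The differences are cosmetic only---you cast the second direction as a contradiction where the paper argues directly on an arbitrary sequence $h_n\to 0$, and your substitution of $h=\lambda_n h_n$ into the full Fr\'echet limit in the first direction is, if anything, slightly more careful than the paper's restriction to rays followed by an appeal to continuity.
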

	
	\begin{proof}
		(\ref{th1}) $\implies$ (\ref{th2}). Since $\textbf{F}$ is $gH$-Fr\'{e}chet differentiable at $\bar{x} \in \mathcal{S}$, there exists a $gH$-continuous and linear IVF $\textbf{G}$ such that
		\begin{align}
			& \notag \lim_{\lambda \to 0+} \frac{{\lVert \textbf{F}(\bar{x}+\lambda h) \ominus_{gH} \textbf{F}(\bar{x}) \ominus_{gH} \textbf{G}(\lambda h) \rVert}_{I(\mathbb{R})}} {{\lVert \lambda h \rVert}}=0, \quad \text{ for all } h\in \mathcal{X}\backslash\{\hat{0} \} \\
			\text{or, } & \lim_{\lambda \to 0+} \frac{1}{\lambda}\lVert \textbf{F}(\bar{x}+\lambda h) \ominus_{gH} \textbf{F}(\bar{x}) \ominus_{gH} \textbf{G}(\lambda h) \rVert_{I(\mathbb{R})}=0, \quad \text{ for all }  h\in \mathcal{X}\backslash\{\hat{0}\}. \label{equation_sub1}
		\end{align}
		Since $\textbf{G}$ is linear, and thus $\textbf{G}(\lambda ~ h) = \lambda \odot \textbf{G}(h)$, the equation (\ref{equation_sub1}) gives
		\begin{align*}
			& \lim_{\lambda \to 0+} \frac{1}{\lambda}\odot\left(\textbf{F}(\bar{x}+\lambda h) \ominus_{gH} \textbf{F}(\bar{x}) \ominus_{gH} \lambda \odot\textbf{G}(h)\right) = \textbf{0}, \quad \text{ for all } h\in \mathcal{X}\backslash\{\hat{0}\} \\
			\text{or, } & \lim_{\lambda \to 0+} \frac{1}{\lambda}\odot\left(\textbf{F}(\bar{x}+\lambda h) \ominus_{gH} \textbf{F}(\bar{x})\right)=\textbf{G}(h), \quad \text{ for all } h\in \mathcal{X}\backslash\{\hat{0}\}.
		\end{align*}
		Since $\textbf{G}$ is $gH$-continuous, we have
		\[	\lim_{\substack{%
				\lambda \to 0+\\
				h \to v}}\frac{1}{\lambda}\odot\left(\textbf{F}(\bar{x}+\lambda h)\ominus_{gH}\textbf{F}(\bar{x})\right) = \textbf{G}(v).
		\]
		Hence, $\textbf{F}$ is $gH$-Hadamard differentiable at $\bar{x}$.
		\newline \\
		\noindent (\ref{th2}) $\implies$ (\ref{th1}). As $\textbf{F}$ is $gH$-Hadamard differentiable at $\bar{x} \in \mathcal{S}$, $\textbf{F}_\mathscr{H}(\bar{x})(v)$ exists for all $v$ and $\textbf{F}_\mathscr{H}(\bar{x})$ is a linear IVF. Let
		\[Q(h)=  \frac{1}{\lVert h \rVert} \odot \left( \textbf{F}(\bar{x}+h) \ominus_{gH} \textbf{F}(\bar{x}) \ominus_{gH} \textbf{F}_\mathscr{H}(\bar{x})(h) \right),~h\neq \hat{0}.\]
		Consider a sequence $\{h_n\}$ converging to $0$. As $\mathcal{W}=\{h/\lVert h\rVert:  h \in \mathcal{X}, h \neq \hat{0}\}$ is a compact set, there exists a subsequences $\{h_{n_k}\}$ and a point $\bar{v}\in \mathcal{W}$ such that $w_{n_k}=\frac{h_{n_k}}{\lVert h_{n_k}\rVert}\to \bar{v} \in \mathcal{W}.$\\
		Note that the sequence $\{t_{n_k}\}$, defined by $t_{n_k}=\lVert h_{n_k}\rVert$, converges to $0$. Since $\textbf{F}_\mathscr{H}(\bar{x})(\bar{v})$ exists and\\ $\textbf{F}_\mathscr{H}(\bar{x})(w_{n_k}) \to \textbf{F}_\mathscr{H}(\bar{x})(\bar{v})$ as $k \to \infty$,  we have
		\[Q(h_{n_k})=\frac{1}{t_{n_k}}\odot\big(\textbf{F}(\bar{x}+t_{n_k} w_{n_k}) \ominus_{gH} \textbf{F}(\bar{x})\big)\ominus_{gH}\textbf{F}_\mathscr{H}(\bar{x})(w_{n_k}) \to \textbf{0} \text{ as } k \to \infty.\]
		This implies that $\lim_{k \to \infty}\lVert Q(h_{n_k})\rVert_{I(\mathbb{R})} = 0.$\\
		As $\{h_n\}$ is an arbitrarily chosen sequence that converges to $0$, $\lim_{\lVert h \rVert \to 0}\lVert Q(h)\rVert_{I(\mathbb{R})} = 0$. Hence, $\textbf{F}$ is $gH$-Fr\'{e}chet differentiable at $\bar{x}$.
	\end{proof}
	
	\begin{remark}
		If $\mathcal{X}$ is infinite dimensional, then Theorem \ref{th31} is not true. For instance, see Example 1 of \cite{Yu2013}. According to this example, there exists a degenerate IVF $\textbf{F}$ which is $gH$-Hadamard differentiable at $\bar{x}$ but not $gH$-Fr\'{e}chet differentiable at $\bar{x}$.
	\end{remark}

	\begin{theorem}\label{the1}
		Let $\mathcal{S}$ be a nonempty subset of $\mathcal{X}=\mathbb{R}^n$. If the function $\textbf{F}: \mathcal{S} \rightarrow I(\mathbb{R})$ has a $gH$-Hadamard derivative at $\bar{x} \in \mathcal{S}$, then the function $\textbf{F}$ is $gH$-continuous at $\bar{x}$.
	\end{theorem}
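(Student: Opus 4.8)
The plan is to deduce $gH$-continuity from the sequential form of the Hadamard limit recorded in Remark \ref{re1}, exploiting the compactness of the unit sphere in $\mathbb{R}^n$. To prove $gH$-continuity at $\bar{x}$, I must show that $\textbf{F}(\bar{x}+d)\ominus_{gH}\textbf{F}(\bar{x}) \to \textbf{0}$ as $\|d\|\to 0$ with $\bar{x}+d\in\mathcal{S}$. Equivalently, I would fix an arbitrary sequence $\{d_n\}$ with $d_n \to \hat{0}$, $d_n \neq \hat{0}$, and $\bar{x}+d_n\in\mathcal{S}$ (the case $d_n=\hat{0}$ being trivial since the $gH$-difference is then $\textbf{0}$), and establish $\textbf{F}(\bar{x}+d_n)\ominus_{gH}\textbf{F}(\bar{x}) \to \textbf{0}$.

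The central device is to write $d_n = \lambda_n w_n$ with $\lambda_n = \|d_n\| \to 0+$ and $w_n = d_n/\|d_n\| \in \mathcal{W} = \{h/\|h\| : h\in\mathcal{X},\, h\neq\hat{0}\}$, exactly as in the proof of Theorem \ref{th31}. The hard part will be that $w_n$ need not converge; it may oscillate over the unit sphere, so one cannot simply feed it into a single-direction limit. I would resolve this obstruction through compactness of $\mathcal{W}$ in $\mathbb{R}^n$ together with the subsequence criterion: it suffices to show that every subsequence of $\{d_n\}$ admits a further subsequence along which $\textbf{F}(\bar{x}+d_n)\ominus_{gH}\textbf{F}(\bar{x}) \to \textbf{0}$. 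From any given subsequence, compactness of $\mathcal{W}$ lets me extract a further subsequence $\{w_{n_k}\}$ with $w_{n_k}\to\bar{v}\in\mathcal{W}$, while still $\lambda_{n_k}\to 0+$.

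I would then invoke Remark \ref{re1}: since $\lambda_{n_k}\to 0+$ and $w_{n_k}\to\bar{v}$, the difference quotient $\frac{1}{\lambda_{n_k}}\odot\big(\textbf{F}(\bar{x}+\lambda_{n_k}w_{n_k})\ominus_{gH}\textbf{F}(\bar{x})\big)$ converges to the finite interval $\textbf{F}_\mathscr{H}(\bar{x})(\bar{v})$. Because $\lambda_{n_k}\odot\big(\tfrac{1}{\lambda_{n_k}}\odot\textbf{B}\big)=\textbf{B}$ for $\lambda_{n_k}>0$ and $\|\lambda\odot\textbf{A}\|_{I(\mathbb{R})}=\lambda\,\|\textbf{A}\|_{I(\mathbb{R})}$ for $\lambda\geq0$, multiplying the bracketed quotient (whose norms are bounded, being convergent) by $\lambda_{n_k}\to 0$ forces $\textbf{F}(\bar{x}+d_{n_k})\ominus_{gH}\textbf{F}(\bar{x}) = \lambda_{n_k}\odot\big(\tfrac{1}{\lambda_{n_k}}\odot(\textbf{F}(\bar{x}+d_{n_k})\ominus_{gH}\textbf{F}(\bar{x}))\big)\to\textbf{0}$. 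Thus the required further subsequence exists, so by the subsequence criterion the whole sequence converges to $\textbf{0}$, which yields $gH$-continuity at $\bar{x}$.

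I expect the only genuine obstacle to be precisely this non-convergence of the normalized directions $w_n$, and the entire argument turns on replacing the naive limit by a compactness-plus-subsequence passage. This is exactly where finite-dimensionality ($\mathcal{X}=\mathbb{R}^n$) enters, consistent with the remark following Theorem \ref{th31} that such equivalences break down in infinite dimensions.
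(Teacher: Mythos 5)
Your proof is correct, but it follows a genuinely different route from the paper's. The paper dispatches Theorem \ref{the1} in two lines: it invokes Theorem \ref{th31} to pass from $gH$-Hadamard to $gH$-Fr\'{e}chet differentiability, and then cites Theorem 5.1 of \cite{Ghosh2019derivative} for the fact that $gH$-Fr\'{e}chet differentiability implies $gH$-continuity. You instead argue directly from Definition \ref{derivative} and Remark \ref{re1}: decompose $d_n=\lambda_n w_n$ with $\lambda_n=\|d_n\|$, extract a convergent subsequence of the unit directions by compactness of the sphere in $\mathbb{R}^n$, use the sequential Hadamard limit to see that the difference quotients along that subsequence converge (hence are norm-bounded), and then kill the $gH$-difference by the factor $\lambda_{n_k}\to 0$, finishing with the subsequence criterion. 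This is in fact the same compactness device the paper uses inside the (ii)$\implies$(i) half of Theorem \ref{th31}, but you apply it directly to continuity, which buys a self-contained proof with no detour through Fr\'{e}chet differentiability and no reliance on an external paper. One remark: your closing claim that finite-dimensionality is \emph{essential} here is not accurate. Compactness can be avoided entirely by taking $\lambda_n=\sqrt{\|d_n\|}$ and $h_n=d_n/\lambda_n$, so that $\lambda_n\to 0+$ and $h_n\to \hat{0}$; then the Hadamard limit in the direction $v=\hat{0}$ gives $\frac{1}{\lambda_n}\odot\bigl(\textbf{F}(\bar{x}+\lambda_n h_n)\ominus_{gH}\textbf{F}(\bar{x})\bigr)\to \textbf{F}_\mathscr{H}(\bar{x})(\hat{0})=\textbf{0}$ by linearity, and the same norm-scaling step concludes. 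Thus $gH$-Hadamard differentiability implies $gH$-continuity in any normed space; what genuinely fails in infinite dimensions (per the remark after Theorem \ref{th31}) is the equivalence with $gH$-Fr\'{e}chet differentiability, which is precisely why the paper's citation-based proof needs $\mathcal{X}=\mathbb{R}^n$ while a direct argument does not.
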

	
	\begin{proof}
		Since $\textbf{F}$ is $gH$-Hadamard differentiable at $\bar{x}\in \mathcal{S}$, $\textbf{F}$ is $gH$-Fr\'{e}chet differentiable at $\bar{x}$ by Theorem  \ref{th31}. Also, from Theorem 5.1 in \cite{Ghosh2019derivative}, the function $\textbf{F}$ is $gH$-continuous at $\bar{x}$.
	\end{proof}
	
	\begin{remark}
		The converse of Theorem \ref{the1} is not true.
		For instance, consider the $gH$-continuous IVF $\textbf{F}(x) = {\lVert x \rVert} \odot \textbf{C}~\text{for all}~x\in \mathbb{R}^n.$ Therefore, for any $v \in \mathbb{R}^n$ and $\bar{x} = 0$, we see that
		\[\lim_{\substack{%
				\lambda \to 0+\\
				h \to v}}\frac{1}{\lambda}\odot\left(\textbf{F}(\bar{x}+\lambda h)\ominus_{gH}\textbf{F}(\bar{x})\right)
		=\lVert v \rVert\odot \textbf{C}.\]
		Hence, the limit value is not a linear IVF on $\mathcal{S}$. Therefore, $\textbf{F}_\mathscr{H}(\bar{x})(h)$ does not exist.
	\end{remark}

	\begin{remark}\label{r2}
		By the definitions of $gH$-directional (Definition 3.1 in \cite{Ghosh2019derivative}), $gH$-G\^{a}teaux (Definition 4.3 in \cite{Ghosh2019derivative}) and $gH$-Hadamard (Definition \ref{derivative}) derivatives of IVF $\textbf{F}$, it is clear that if $\textbf{F}_\mathscr{H}(\bar{x})(h)$ exists, then $\textbf{F}_\mathscr{D}(\bar{x})(h)$ and $\textbf{F}_\mathscr{G}(\bar{x})(h)$ exist and they are equal to $\textbf{F}_\mathscr{H}(\bar{x})(h)$. However, the converse is not true. For instance, consider the IVF $\textbf{F}: \mathbb{R}^2 \rightarrow I(\mathbb{R})$ defined by
		\[\textbf{F}(x, y)=
		\begin{cases}
		\left( \frac{x^6}{(y-x^2)^2+x^8}\right) \odot [3,~9], & \text{if } (x,y) \neq (0, 0),\\
		\textbf{0}, & \text{otherwise}.
		\end{cases}
		\]
		For $\bar{x}=(0, 0)$ and arbitrary $h=(h_1, h_2)\in \mathbb{R}^2$, we have
		\[\lim_{\substack{%
				\lambda \to 0+\\}}\frac{1}{\lambda}\odot\left(\textbf{F}(\bar{x}+\lambda h)\ominus_{gH}\textbf{F}(\bar{x})\right)
		= \lim_{\substack{%
				\lambda \to 0+\\
			}}\frac{1}{\lambda}\odot\left(	\left( \frac{\lambda^6h_1^6}{(\lambda h_2-\lambda^2 h_1^2)^2+\lambda ^8 h_1^8}\right) \odot [3,~9] \right)\\
			= \textbf{0}.
			\]
			Hence, $\textbf{F}$ is $gH$-directional and $gH$-G\^{a}teaux differentiable at $\bar{x}$ with $\textbf{F}_\mathscr{D}(\bar{x})(h)= \textbf{F}_\mathscr{G}(\bar{x})(h)=\textbf{0}$.\\
			Let $\lambda_n = \frac{1}{n}$ and $h_n=(\frac{1}{n}, \frac{1}{n^3})~\text{for}~n \in \mathbb{N}$. Then, for $\bar{x}=(0, 0)$, we have
			\begin{equation}\label{eq1}
				\lim_{n\to \infty} \frac{1}{\lambda_n}\odot\left(\textbf{F}(\bar{x}+\lambda_n h_n)\ominus_{gH}\textbf{F}(\bar{x})\right) = \lim_{n\to \infty} n^5\odot [3, 9].
			\end{equation}
			Hence, $\textbf{F}_\mathscr{H}(\bar{x})(0)$ does not exist.
		\end{remark}
		
		\begin{theorem}\label{eth33}
			Let $\mathcal{S}$ be a nonempty convex subset of $\mathbb{R}^n$ and the IVF $\textbf{F}: \mathcal{S} \rightarrow I(\mathbb{R})$ has $gH$-Hadamard derivative at every $\bar{x} \in \mathcal{S}$. If the function $\textbf{F}$ is convex on $\mathcal{S}$, then
			\[
			\textbf{F}(v)\ominus_{gH}\textbf{F}(\bar{x}) \nprec\textbf{F}_\mathscr{H}(\bar{x})(v-\bar{x}), \quad \text{ for all } v\in \mathcal{S}.
			\]
		\end{theorem}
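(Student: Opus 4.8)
The plan is to reduce this interval inequality to two scalar convexity inequalities for the endpoint functions and then push them through the $\min$/$\max$ that defines $\ominus_{gH}$. Writing $\textbf{F}(x)=[\underline{f}(x),\overline{f}(x)]$, Lemma \ref{lc1} identifies convexity of $\textbf{F}$ with convexity of both $\underline{f}$ and $\overline{f}$ on $\mathcal{S}$. First I would compute the right-hand side explicitly. Since $\textbf{F}_\mathscr{H}(\bar{x})(v-\bar{x})$ exists, Remark \ref{r2} lets me evaluate it as the $gH$-directional derivative along the segment $\lambda\mapsto\bar{x}+\lambda(v-\bar{x})$, $\lambda\to 0+$, which is legitimate because $\mathcal{S}$ is convex so the whole segment lies in $\mathcal{S}$. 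Using the closed-form expression for $\ominus_{gH}$ and dividing by $\lambda>0$, the difference quotient equals $\left[\min\{u_\lambda/\lambda,w_\lambda/\lambda\},\max\{u_\lambda/\lambda,w_\lambda/\lambda\}\right]$ with $u_\lambda=\underline{f}(\bar{x}+\lambda(v-\bar{x}))-\underline{f}(\bar{x})$ and $w_\lambda=\overline{f}(\bar{x}+\lambda(v-\bar{x}))-\overline{f}(\bar{x})$. Because $\underline{f},\overline{f}$ are convex, each difference quotient is nondecreasing in $\lambda$, so the scalar limits $\alpha=\underline{f}_\mathscr{D}(\bar{x})(v-\bar{x})$ and $\beta=\overline{f}_\mathscr{D}(\bar{x})(v-\bar{x})$ exist; the hypothesis that the compact interval $\textbf{F}_\mathscr{H}(\bar{x})(v-\bar{x})$ exists forces them finite. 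Passing to the limit and using continuity of $\min,\max$ gives $\textbf{F}_\mathscr{H}(\bar{x})(v-\bar{x})=[\min\{\alpha,\beta\},\max\{\alpha,\beta\}]$.

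Next I would record the scalar subgradient inequalities. For a finite convex function the nondecreasing difference quotient $\lambda\mapsto(\phi(\lambda)-\phi(0))/\lambda$ has its limit at $0+$ bounded above by its value at $\lambda=1$. Applying this to $\underline{f}$ and $\overline{f}$ along the segment yields
\[
\alpha \le \underline{f}(v)-\underline{f}(\bar{x})=:p \quad\text{and}\quad \beta \le \overline{f}(v)-\overline{f}(\bar{x})=:q.
\]
Set $\textbf{A}:=\textbf{F}(v)\ominus_{gH}\textbf{F}(\bar{x})=[\min\{p,q\},\max\{p,q\}]$ and $\textbf{B}:=\textbf{F}_\mathscr{H}(\bar{x})(v-\bar{x})=[\min\{\alpha,\beta\},\max\{\alpha,\beta\}]$. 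From $\alpha\le p$ and $\beta\le q$ the monotonicity of $\min$ and $\max$ gives $\min\{\alpha,\beta\}\le\min\{p,q\}$ and $\max\{\alpha,\beta\}\le\max\{p,q\}$, that is, $\textbf{B}\preceq\textbf{A}$ in the sense of Definition \ref{interval_dominance}(i).

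Finally I would convert $\textbf{B}\preceq\textbf{A}$ into the desired non-dominance. Suppose, for contradiction, that $\textbf{A}\prec\textbf{B}$. By Definition \ref{interval_dominance}(ii) this forces $\underline{a}\le\underline{b}$ and $\overline{a}\le\overline{b}$ with at least one strict inequality, i.e. $\textbf{A}\preceq\textbf{B}$ together with $\textbf{A}\neq\textbf{B}$. Combining $\textbf{A}\preceq\textbf{B}$ with $\textbf{B}\preceq\textbf{A}$ and the antisymmetry of $\preceq$ (inherited from $\le$ on $\mathbb{R}$) gives $\textbf{A}=\textbf{B}$, contradicting $\textbf{A}\neq\textbf{B}$. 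Hence $\textbf{A}\nprec\textbf{B}$, which is exactly $\textbf{F}(v)\ominus_{gH}\textbf{F}(\bar{x})\nprec\textbf{F}_\mathscr{H}(\bar{x})(v-\bar{x})$.

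I expect the main obstacle to be the careful bookkeeping forced by $\ominus_{gH}$: since the derivative and the difference are both built from $\min$/$\max$ of endpoint quantities, the endpoints of $\textbf{A}$ and $\textbf{B}$ need not come from the same endpoint function, so the scalar inequalities $\alpha\le p$ and $\beta\le q$ cannot be read off the interval endpoints directly but must be transported through $\min$ and $\max$. The delicate step is justifying that the interval limit defining $\textbf{B}$ genuinely splits into the two separate scalar directional derivatives $\alpha,\beta$ rather than some interchanged quantity; this is precisely where convexity is indispensable, as it guarantees monotone (hence convergent) difference quotients for each of $\underline{f}$ and $\overline{f}$ individually.
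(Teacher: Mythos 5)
Your proof is correct, and it reaches the paper's key intermediate fact --- $\textbf{F}_\mathscr{H}(\bar{x})(v-\bar{x})\preceq \textbf{F}(v)\ominus_{gH}\textbf{F}(\bar{x})$, followed by the observation that dominance one way rules out strict dominance the other way --- but by a genuinely different route. The paper stays at the interval level: it applies the interval convexity inequality, subtracts $\textbf{F}(\bar{x})$ in the $gH$ sense on both sides, uses the identity $(\lambda\odot\textbf{F}(h)\oplus\lambda'\odot\textbf{F}(\bar{x}))\ominus_{gH}\textbf{F}(\bar{x})=\lambda\odot(\textbf{F}(h)\ominus_{gH}\textbf{F}(\bar{x}))$, divides by $\lambda$, and then passes to the Hadamard limit $\lambda\to 0+$, $h\to v$, invoking $gH$-continuity (Theorem \ref{the1}) to handle the moving right-hand side. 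You instead scalarize at the outset via Lemma \ref{lc1}, fix the direction $v-\bar{x}$ (legitimate by Remark \ref{r2}, since the Hadamard derivative, when it exists, coincides with the directional derivative), and run the classical monotone-difference-quotient argument for $\underline{f}$ and $\overline{f}$ separately before reassembling through $\min$/$\max$. What your version buys is precisely what the paper leaves implicit: the existence and finiteness of the two endpoint limits, the fact that the interval limit splits into the two scalar directional derivatives, and the preservation of $\preceq$ in the limit all become transparent; you also avoid any appeal to $gH$-continuity, and your antisymmetry argument for converting $\textbf{B}\preceq\textbf{A}$ into $\textbf{A}\nprec\textbf{B}$ is cleaner than the paper's contradiction step, which is marred by unexplained primed points $\bar{x}', v'$. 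What the paper's version buys is brevity and a formulation that works directly with the interval-valued convexity definition without unpacking endpoints, at the cost of relying on several unproved (though true) interval-arithmetic manipulations.
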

		
		\begin{proof}
			Since $\textbf{F}$ is convex on $\mathcal{S}$, for any $\bar{x},~h \in \mathcal{S}$ and $\lambda,~\lambda' \in (0,1]$ with $\lambda+\lambda'=1$, we have
			\begin{align*}
				&\textbf{F}(\bar{x}+\lambda (h-\bar{x}))=\textbf{F}(\lambda h+\lambda'\bar{x})\preceq\lambda\odot\textbf{F}(h)\oplus\lambda'\odot\textbf{F}(\bar{x})\\
				\implies	&	\textbf{F}(\bar{x}+\lambda (h-\bar{x}))\ominus_{gH}\textbf{F}(\bar{x})\preceq(\lambda\odot\textbf{F}(h)\oplus\lambda'\odot\textbf{F}(\bar{x}))\ominus_{gH}\textbf{F}(\bar{x})\\
				\implies&\textbf{F}(\bar{x}+\lambda (h-\bar{x}))\ominus_{gH}\textbf{F}(\bar{x})\preceq \lambda\odot(\textbf{F}(h)\ominus_{gH}\textbf{F}(\bar{x}))\\
				\implies&\frac{1}{\lambda}\odot(\textbf{F}(\bar{x}+\lambda (h-\bar{x}))\ominus_{gH}\textbf{F}(\bar{x}))\preceq\textbf{F}(h)\ominus_{gH}\textbf{F}(\bar{x}).
			\end{align*}
			From Theorem \ref{the1}, $\textbf{F}$ is $gH$-continuous. Thus, as $\lambda\to 0+$ and $h\to v$, we obtain
			\begin{equation}\label{gc}
				\textbf{F}_\mathscr{H}(\bar{x})(v-\bar{x})\preceq \textbf{F}(v)\ominus_{gH}\textbf{F}(\bar{x}), \quad \text{for all}~v\in \mathcal{S}.
			\end{equation}
			If possible, let
			\[
			\textbf{F}(v') \ominus_{gH} \textbf{F}(\bar{x}')\prec \textbf{F}_\mathscr{H}(\bar{x}')(v'-\bar{x}')~\text{for some}~v'\in \mathcal{X}.
			\]
			Then,
			\[
			\textbf{F}(v')\ominus_{gH}\textbf{F}(\bar{x}')\prec\textbf{F}_\mathscr{H}(\bar{x}')(v'-\bar{x}'),
			\]
			which contradicts (\ref{gc}). Hence,
			\[
			\textbf{F}(v)\ominus_{gH}\textbf{F}(\bar{x}) \nprec\textbf{F}_\mathscr{H}(\bar{x})(v-\bar{x}), \quad \text{ for all } v\in \mathcal{S}.
			\]
		\end{proof}

		\begin{remark}\label{nee1}
			The converse of Theorem \ref{eth33} is not true. For example, let us consider the IVF $\textbf{F}: \mathbb{R} \rightarrow I(\mathbb{R})$ defined by
			\[\textbf{F}(x)=[-4x^2, 6x^2].\]
			At $\bar{x}=0\in \mathbb{R}$, for arbitrary $v\in \mathbb{R}$, we have
			\begin{eqnarray*}
				&& \textbf{F}_\mathscr{H}(\bar{x})(v)=\lim_{\substack{%
						\lambda \to 0+\\
						h \to v}} \frac{1}{\lambda}\odot\left(\textbf{F}(\bar{x} + \lambda h) \ominus_{gH} \textbf{F}(\bar{x})\right)
				=\textbf{0}.
			\end{eqnarray*}
			Hence, $
			\textbf{F}(v) \ominus_{gH}\textbf{F}(\bar{x}) \nprec\textbf{F}_\mathscr{H}(\bar{x})(v-\bar{x})  \text{ for all } v\in \mathbb{R}.$ However, $\underline{f}$ is not convex on $\mathbb{R}$. Thus, from Lemma \ref{lc1}, $\textbf{F}$ is not convex on $\mathbb{R}$.
		\end{remark}
		
		\begin{remark}
			For a convex IVF \textbf{F} on $\mathcal{S}\subset \mathbb{R}^n$, the inequality \textquoteleft$\textbf{F}_\mathscr{H}(\bar{x})(v-\bar{x}) \ominus_{gH} \textbf{F}_\mathscr{H}(v)(v-\bar{x}) \preceq \textbf{0}$ for all $\bar{x}, v \in \mathcal{S}$\textquoteright~is not true. For instance,  consider the convex IVF $\textbf{F}: \mathbb{R} \rightarrow I(\mathbb{R})$ defined by
			\[\textbf{F}(x)= [x^2, 3x^2].\]
			At $\bar{x}\in \mathbb{R}$, for arbitrary $v\in \mathbb{R}$, we have $\textbf{F}_\mathscr{H}(\bar{x})(v-\bar{x}) = 2 \bar{x}(v-\bar{x})\odot [1, 3]$. For $\bar{x}=1$ and $v=2$, we obtain $\textbf{F}_\mathscr{H}(\bar{x})(v-\bar{x}) \ominus \textbf{F}_\mathscr{H}(v)(v-\bar{x})=[-10, 2]\npreceq \textbf{0}.$
		\end{remark}
		
		\begin{theorem}\label{th4444ed}
			Let $\textbf{F}: \mathbb{R}^n \rightarrow I(\mathbb{R})$ be an IVF and $\bar{x} \in \mathbb{R}^n$. Then, for a given direction $v\in \mathbb{R}^n$, the following statements are equivalent:
			\begin{enumerate}[\rm(i)]
				\item \label{th321ed} $\textbf{F}$ is $gH$-Hadamard differentiable at $\bar{x}$;
				
				\item \label{th322ed} There exists a linear IVF $\textbf{L}: \mathbb{R}^n \rightarrow I(\mathbb{R})$ such that for any path  $f: \mathbb{R} \to \mathbb{R}^n$ with $f(0)=\bar{x}$ for which $f_\mathscr{D}(0)(1)$ exists, we have
				\[(\textbf{F}\circ f)_\mathscr{D}(0)(1)= \textbf{L}(\bar{x})(v), \quad \text{ where } v=f_\mathscr{D}(0)(1).\]
				
			\end{enumerate}
		\end{theorem}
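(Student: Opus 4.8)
The plan is to prove the two implications separately, relying on the sequential characterization of the $gH$-Hadamard derivative recorded in Remark \ref{re1} together with the observation that composing $\textbf{F}$ with a path $f$ converts the joint limit $(\lambda, h) \to (0+, v)$ into a one-parameter limit along a curve. Throughout I identify the linear IVF $\textbf{L}$ with $\textbf{F}_\mathscr{H}(\bar{x})$, so that $\textbf{L}(v)$ denotes its value in direction $v$.

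For the implication (\ref{th321ed}) $\implies$ (\ref{th322ed}), I would set $\textbf{L} = \textbf{F}_\mathscr{H}(\bar{x})$, which is linear by the definition of $gH$-Hadamard differentiability. Given any path $f$ with $f(0) = \bar{x}$ for which $v = f_\mathscr{D}(0)(1)$ exists, write $h_\lambda = \frac{1}{\lambda}(f(\lambda) - \bar{x})$ for $\lambda > 0$; then $h_\lambda \to v$ as $\lambda \to 0+$. The difference quotient of the composition becomes $\frac{1}{\lambda}\odot(\textbf{F}(\bar{x} + \lambda h_\lambda) \ominus_{gH} \textbf{F}(\bar{x}))$, which is precisely a slice of the joint limit defining $\textbf{F}_\mathscr{H}(\bar{x})(v)$ along the admissible approach $\lambda \to 0+$, $h_\lambda \to v$. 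Hence $(\textbf{F}\circ f)_\mathscr{D}(0)(1) = \textbf{F}_\mathscr{H}(\bar{x})(v) = \textbf{L}(v)$. This direction is routine because the Hadamard limit is designed to be stable under exactly such approaches.

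For the converse (\ref{th322ed}) $\implies$ (\ref{th321ed}), I would invoke Remark \ref{re1} and show that for arbitrary sequences $\lambda_n \to 0+$ (which may be taken strictly decreasing) and $h_n \to v$, the quotients $\frac{1}{\lambda_n}\odot(\textbf{F}(\bar{x}+\lambda_n h_n)\ominus_{gH}\textbf{F}(\bar{x}))$ converge to the single value $\textbf{L}(v)$. The device is to manufacture one path that threads through all the prescribed points. Concretely, define $g$ on $(0, \lambda_1]$ by linear interpolation between the values $g(\lambda_n) = h_n$ on each subinterval $[\lambda_{n+1}, \lambda_n]$, extend $g$ continuously beyond $\lambda_1$, and put $f(\lambda) = \bar{x} + \lambda\, g(\lambda)$ for $\lambda > 0$, with $f(0) = \bar{x}$ and any continuous extension to $\lambda \le 0$. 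Since $h_n \to v$ and the interpolant stays sandwiched between consecutive $h_n$, we get $g(\lambda) \to v$ as $\lambda \to 0+$, so $f_\mathscr{D}(0)(1) = v$ and $f$ is an admissible path with $f(\lambda_n) = \bar{x} + \lambda_n h_n$. By hypothesis (\ref{th322ed}), $(\textbf{F}\circ f)_\mathscr{D}(0)(1)$ exists and equals $\textbf{L}(v)$; restricting this one-parameter limit to the subsequence $\lambda = \lambda_n$ yields $\lim_{n\to\infty}\frac{1}{\lambda_n}\odot(\textbf{F}(\bar{x}+\lambda_n h_n)\ominus_{gH}\textbf{F}(\bar{x})) = \textbf{L}(v)$. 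As the sequences were arbitrary and $\textbf{L}$ is linear, Remark \ref{re1} gives that $\textbf{F}_\mathscr{H}(\bar{x})(v) = \textbf{L}(v)$ exists and is a linear IVF, i.e., $\textbf{F}$ is $gH$-Hadamard differentiable at $\bar{x}$.

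I expect the main obstacle to be the path construction in the converse: one must check that the interpolating curve is genuinely continuous, passes through every point $\bar{x} + \lambda_n h_n$, and still has directional derivative exactly $v$ at $0$, so that it qualifies as an admissible path in (\ref{th322ed}). Because the $gH$-difference and the scalar factor $\frac{1}{\lambda_n}$ commute well with subsequential limits, once such a path is in hand the remaining steps reduce to bookkeeping with the sequential criterion of Remark \ref{re1}.
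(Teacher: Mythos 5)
Your proposal is correct and takes essentially the same approach as the paper: the forward direction identifies $\textbf{L}$ with $\textbf{F}_\mathscr{H}(\bar{x})$ and reads the path's difference quotient $\frac{1}{\lambda}\odot\left(\textbf{F}(\bar{x}+\lambda h_\lambda)\ominus_{gH}\textbf{F}(\bar{x})\right)$ as an admissible approach to the Hadamard limit, while the converse manufactures a single path threading through the points $\bar{x}+\lambda_n h_n$ so that the hypothesis on $(\textbf{F}\circ f)_\mathscr{D}(0)(1)$ forces the sequential limit of Remark \ref{re1}. The only differences are cosmetic: the paper frames the converse as a contradiction and uses the piecewise path $f(\delta)=\bar{x}+\delta h_n$ for $\delta_n \leq \delta < \delta_{n-1}$ (Delfour's construction) instead of your linear interpolation, and both arguments rely on the same tacit reduction to strictly decreasing $\lambda_n$ that you flag only in passing.
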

		
		\begin{proof}(\ref{th321ed}) $\implies$ (\ref{th322ed}). Let $\{\delta_n\}$ be a sequence of positive real numbers with $\delta_n \to 0^+$ and $h_n = \frac{1}{\delta_n}\left(f(\delta_n)-f(0)\right)$ for all $n \in \mathbb{N}$. Since $f_\mathscr{D}(0)(1)$ exists, we have
			\begin{equation}\label{e321ed}
				\lim_{n \to \infty} h_n = \lim_{n \to \infty} \frac{1}{\delta_n}\odot\left(f(\delta_n)- f(0) \right) = f_\mathscr{D}(0)(1)=v.
			\end{equation}
			If $\textbf{F}$ is $gH$-Hadamard differentiable at $\bar{x}$, then
			\begin{eqnarray*}
				\textbf{F}_\mathscr{H}(\bar{x})(v)&=&\lim_{n \to \infty}\frac{1}{\delta_n}\odot\left(\textbf{F}(\bar{x}+\delta_n h_n)\ominus_{gH}\textbf{F}(\bar{x})\right)\\
				&=& \lim_{n \to \infty}\frac{1}{\delta_n}\odot\left(\textbf{F}(f(\delta_n))\ominus_{gH}\textbf{F}(f(0))\right),\quad \text{ since } f(0)=\bar{x} \text{ and } h_n = \frac{1}{\delta_n}\left(f(\delta_n)-f(0)\right) \\
				&=& \lim_{n \to \infty}\frac{1}{\delta_n}\odot\left((\textbf{F} \circ f)(\delta_n)\ominus_{gH}(\textbf{F} \circ f)(0)\right).
			\end{eqnarray*}
			Hence, $(\textbf{F}\circ f)_\mathscr{D}(0)(1)=\textbf{F}_\mathscr{H}(\bar{x})(v)$. Due to the linearity of $\textbf{F}_\mathscr{H}(\bar{x})(v)$ on $\mathbb{R}^n$, by taking $\textbf{L}(\bar{x})(v)=\textbf{F}_\mathscr{H}(\bar{x})(v)$, we get the desired result.\\
			
			\noindent (\ref{th322ed}) $\implies$ (\ref{th321ed}). If possible, assume that $\textbf{F}$ is not $gH$-Hadamard differentiable at $\bar{x}$. Then, there exist sequences $h_n \to v$ and $\delta_n \to 0^+$ such that
			\begin{equation}\label{ethe36ed}
				\textquoteleft \text{ either }\lim_{n \to \infty}\frac{1}{\delta_n}\odot\left(\textbf{F}(\bar{x}+\delta_n h_n)\ominus_{gH}\textbf{F}(\bar{x})\right) \text{ does not exist\textquoteright \quad or \quad \textquoteleft limit value is not linear IVF on } \mathbb{R}^n \mrq .
			\end{equation}
			Since $h_n \to v$ and $\delta_n \to 0^+$, for every $\epsilon >0 $ there exist a natural number $N$ and a real number $a$ such that \begin{equation}\label{e322ed}
				\lVert h_n\rVert \leq a,\quad \lVert h_n -v\rVert < \epsilon, ~\text{and} \quad \delta_n < \epsilon/a~\text{for all} ~n> N. \end{equation}
			By using the sequences $\{h_n\}$ and $\{\delta_n\}$, we construct a function $f: \mathbb{R} \rightarrow \mathbb{R}^n$ as follows:
			\[ f(\delta) =
			\begin{cases}
			\bar{x} +\delta v,& \text{if $\delta \leq 0$},\\
			\bar{x}+\delta h_n, & \text{if $\delta_n \leq \delta < \delta_{n-1}, n\geq 2$},\\
			\bar{x}+\delta h_1, & \text{if $\delta \geq \delta_1.$}\\
			\end{cases}\]
			Thus the function $f$ yields $f(0)=\bar{x}$ and $f_\mathscr{D}(0)(1)=v$ (for details, see p. 92 in \cite{Delfour2012}). By hypothesis,\\ $(\textbf{F}\circ f)_\mathscr{D}(0)(1)$ exists and equals to $\textbf{L}(\bar{x})(v)$, where $v=f_\mathscr{D}(0)(1)$. From the construction of $f$, we have
			\begin{align*}
				& \lim_{n \to \infty}\frac{1}{\delta_n}\odot\left((\textbf{F} \circ f)(\delta_n)\ominus_{gH}(\textbf{F} \circ f)(0)\right) =\textbf{L}(\bar{x})(v)\\
				\text{or, }& \lim_{n \to \infty}\frac{1}{\delta_n}\odot\left(\textbf{F}(f(\delta_n))\ominus_{gH}\textbf{F}(f(0))\right) =\textbf{L}(\bar{x})(v)\\
				\text{or, }&\lim_{n \to \infty}\frac{1}{\delta_n}\odot\left(\textbf{F}(\bar{x}+\delta_n h_n)\ominus_{gH}\textbf{F}(\bar{x})\right) =\textbf{L}(\bar{x})(v),
			\end{align*}
			which contradicts to (\ref{ethe36ed}). Therefore, $\textbf{F}$ is $gH$-Hadamard differentiable at $\bar{x}$.
			
		\end{proof}

		\begin{theorem}\label{th34edd}$($\emph{Chain rule}$)$.
			Let $H:\mathbb{R}^m \rightarrow \mathbb{R}^n$ be a vector-valued function and $\textbf{F}:\mathbb{R}^n \rightarrow I(\mathbb{R})$ be an IVF. Assume that for a point $\bar{x}\in \mathbb{R}^m$ and direction $v\in \mathbb{R}^m$,
			\begin{enumerate}[\rm(a)]
				\item \label{ath341edd} $H_\mathscr{D}(\bar{x})(v)$ exists for all $v \in \mathbb{R}^m$, and
				
				\item \label{ath342edd} $\textbf{F} _\mathscr{H}(\bar{y})(z)$ exists, \quad where $\bar{y}=H(\bar{x})$ and $ z=H_\mathscr{D}(\bar{x})(v)$.
			\end{enumerate}
			Then,
			\begin{enumerate}[\rm(i)]
				\item\label{th341eedd} $(\textbf{F}\circ H)_\mathscr{D}(\bar{x})(v)$ exists and
				$(\textbf{F}\circ H)_\mathscr{D}(\bar{x})(v)= \textbf{F} _\mathscr{H}(\bar{y})(z)$
				
				\item \label{th342eedd} if $H _\mathscr{H}(\bar{x})(v)$ exists, then $(\textbf{F}\circ H)_\mathscr{H}(\bar{x})(v)$ exists and
				\[(\textbf{F}\circ H)_\mathscr{H}(\bar{x})(v)= \textbf{F} _\mathscr{H}(\bar{y})(\bar{z}), \quad \text{where}~\bar{y}=H(\bar{x}),~ \bar{z}=H_\mathscr{H}(\bar{x})(v).\]
			\end{enumerate}
		\end{theorem}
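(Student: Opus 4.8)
The plan is to reduce both parts to a single device: rewrite the inner increment $H(\bar{x}+\lambda h)-H(\bar{x})$ as $\lambda$ times a vector that converges to the relevant derivative of $H$, and then feed this (slightly perturbed) direction into the $gH$-Hadamard derivative of $\textbf{F}$ at $\bar{y}=H(\bar{x})$. Throughout I would work with the sequential characterization of Remark \ref{re1}, which is what renders the perturbed direction harmless.

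For part (\ref{th341eedd}), I would start from the definition of the $gH$-directional derivative of $\textbf{F}\circ H$, namely $(\textbf{F}\circ H)_\mathscr{D}(\bar{x})(v)=\lim_{\lambda\to 0+}\frac{1}{\lambda}\odot\big(\textbf{F}(H(\bar{x}+\lambda v))\ominus_{gH}\textbf{F}(H(\bar{x}))\big)$. I would fix an arbitrary sequence $\lambda_n\to 0+$ and set $h_n:=\frac{1}{\lambda_n}\big(H(\bar{x}+\lambda_n v)-H(\bar{x})\big)$; assumption (\ref{ath341edd}) gives $h_n\to H_\mathscr{D}(\bar{x})(v)=z$. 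The key algebraic identity is $H(\bar{x}+\lambda_n v)=\bar{y}+\lambda_n h_n$, so the difference quotient equals $\frac{1}{\lambda_n}\odot\big(\textbf{F}(\bar{y}+\lambda_n h_n)\ominus_{gH}\textbf{F}(\bar{y})\big)$. Since $\lambda_n\to 0+$ and $h_n\to z$, assumption (\ref{ath342edd}) together with Remark \ref{re1} permits passage to the limit, yielding $\textbf{F}_\mathscr{H}(\bar{y})(z)$. As the sequence was arbitrary, $(\textbf{F}\circ H)_\mathscr{D}(\bar{x})(v)$ exists and equals $\textbf{F}_\mathscr{H}(\bar{y})(z)$.

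For part (\ref{th342eedd}) I would repeat the argument but let the outer direction vary as well. I would take arbitrary sequences $\lambda_n\to 0+$ and $w_n\to v$ and put $h_n:=\frac{1}{\lambda_n}\big(H(\bar{x}+\lambda_n w_n)-H(\bar{x})\big)$. The existence of $H_\mathscr{H}(\bar{x})(v)$ now forces $h_n\to\bar{z}:=H_\mathscr{H}(\bar{x})(v)$. A small matching step is needed here: because existence of the Hadamard derivative of $H$ implies existence of its directional derivative with the same value (the vector-valued analogue of Remark \ref{r2}), we have $\bar{z}=z$, so the existence granted by (\ref{ath342edd}) indeed covers $\textbf{F}_\mathscr{H}(\bar{y})(\bar{z})$. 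Using $H(\bar{x}+\lambda_n w_n)=\bar{y}+\lambda_n h_n$ and Remark \ref{re1} once more gives the limit $\textbf{F}_\mathscr{H}(\bar{y})(\bar{z})$. Finally, to confirm this is a genuine $gH$-Hadamard derivative I would verify linearity: since $(\textbf{F}\circ H)_\mathscr{H}(\bar{x})(v)=\textbf{F}_\mathscr{H}(\bar{y})\big(H_\mathscr{H}(\bar{x})(v)\big)$ is the composition of the linear map $H_\mathscr{H}(\bar{x})$ with the linear IVF $\textbf{F}_\mathscr{H}(\bar{y})$, the resulting map is a linear IVF in the sense of Definition \ref{linear}.

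The main obstacle---or rather the point on which the whole argument turns---is that the effective direction $h_n$ along which $\textbf{F}$ is probed is not fixed: it only converges to $z$ while being perturbed at each stage. A mere $gH$-directional or $gH$-G\^{a}teaux derivative of $\textbf{F}$ at $\bar{y}$ along the fixed direction $z$ would not permit passing to the limit under such a perturbation, which is precisely why the chain rule fails for those weaker derivatives. The joint limit ($\lambda\to 0+$, $h\to v$) built into Definition \ref{derivative} is exactly what absorbs the varying direction, so the difficulty is dissolved by the structure of the $gH$-Hadamard derivative itself; the only genuinely new bookkeeping is the identification $\bar{z}=z$ in part (\ref{th342eedd}).
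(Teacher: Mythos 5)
Your proof is correct and follows essentially the same route as the paper's: the paper likewise rewrites the inner increment via $\theta(\delta)=\frac{1}{\delta}\big(H(\bar{x}+\delta v)-H(\bar{x})\big)$ (and $\Phi(\delta,h)$ for part (\ref{th342eedd})), notes convergence to the relevant derivative of $H$, and lets the joint limit in Definition \ref{derivative} absorb the perturbed direction, which is exactly your sequential argument via Remark \ref{re1}. Your two extra touches---the explicit identification $\bar{z}=z$ and the linearity check for the composite map---are points the paper leaves implicit, but they refine rather than change the argument.
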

		
		\begin{proof}(\ref{th341eedd})
			For $\delta >0$, define
			\begin{equation}\label{eth341edd}
				\textbf{Q}(\delta)= \frac{1}{\delta}\odot\big(\textbf{F}(H(\bar{x}+\delta v))\ominus_{gH}\textbf{F}(H(\bar{x}))\big) \quad \text{and} \quad \theta(\delta)=\frac{1}{\delta}\big(H(\bar{x}+\delta v)-H(\bar{x})\big).
			\end{equation}
			Then,
			\begin{equation}\label{eth342edd}
				\textbf{Q}(\delta)=\frac{1}{\delta}\odot\big(\textbf{F}(H(\bar{x})+\delta \theta(\delta))\ominus_{gH}\textbf{F}(H(\bar{x}))\big).
			\end{equation}
			Since $\theta(\delta)\to H_\mathscr{D}(\bar{x})(v)$ as $\delta \to 0+$, from (\ref{eth341edd}), (\ref{eth342edd}) and the hypothesis (\ref{ath342edd}), we have
			\begin{eqnarray*}
				\textbf{F}_\mathscr{H}(\bar{y})(z)&=&	\lim_{\delta \to 0+}
				\frac{1}{\delta}\odot\left(\textbf{F}(H(\bar{x}+\delta v))\ominus_{gH}\textbf{F}(H(\bar{x}))\right) ,~\text{where}~\bar{y}=H(\bar{x}), z=H_\mathscr{D}(\bar{x})(v)\\
				&=& \lim_{
					\delta \to 0+} \frac{1}{\delta}\odot\left((\textbf{F} \circ H)(\bar{x}+\delta v)\ominus_{gH}(\textbf{F} \circ H)(\bar{x}))\right)\\
				&=& (\textbf{F}\circ H)_\mathscr{D}(\bar{x})(v).
			\end{eqnarray*}
			(\ref{th342eedd}) For $\delta >0$ and $h\in \mathbb{R}^m$, define
			\begin{equation}\label{eth343edd}
				\textbf{Q}'(\delta, h)= \frac{1}{\delta}\odot\left(\textbf{F}(H(\bar{x}+\delta h))\ominus_{gH}\textbf{F}(H(\bar{x}))\right) \text{ and } \Phi(\delta, h)=\frac{1}{\delta}\left(H(\bar{x}+\delta h)-H(\bar{x})\right).
			\end{equation}
			Then,
			\begin{equation}\label{eth344edd}
				\textbf{Q}'(\delta, h)=\frac{1}{\delta}\odot\left(\textbf{F}(H(\bar{x})+\delta \Phi(\delta,h))\ominus_{gH}\textbf{F}(H(\bar{x}))\right).
			\end{equation}
			Since $\Phi(\delta, h)\to H_\mathscr{H}(\bar{x})(v)~\text{as}~ \delta \to 0+~\text{and}~h\to v$, from (\ref{eth343edd}), (\ref{eth344edd}) and the hypothesis (\ref{ath342edd}), we have
			\begin{eqnarray*}
				\textbf{F}_\mathscr{H}(\bar{y})(\bar{k}) &=&	\lim_{\substack{%
						\delta \to 0+\\
						h \to v}} \frac{1}{\delta}\odot\big(\textbf{F}(H(\bar{x}+\delta h))\ominus_{gH}\textbf{F}(H(\bar{x}))\big),~\text{where}~\bar{y}=H(\bar{x}),~ \bar{z}=H_\mathscr{H}(\bar{x})(v)\\
				&=& \lim_{\substack{%
						\delta \to 0+\\
						h \to v}} \frac{1}{\delta}\odot\big(\textbf{F} \circ H)(\bar{x}+\delta h)\ominus_{gH}(\textbf{F} \circ H)(\bar{x}))\big )\\
				&=& (\textbf{F}\circ H)_\mathscr{H}(\bar{x})(v).
			\end{eqnarray*}
		\end{proof}
		
		The weaker assumption---the existence of $G_\mathscr{D}(\bar{x})(v)$ and $\textbf{F} _\mathscr{D}(\bar{y})(k)$ with $\bar{y}=G(\bar{x}), k=G_\mathscr{D}(\bar{x})(v)$---is not sufficient to prove Theorem \ref{th34edd}. For the proof of this theorem, we require a strong assumption (\ref{ath342edd}) of Theorem \ref{th34edd}. This is illustrated by the following example that the composition $\textbf{F}\circ G$, of a $gH$-G\^{a}teaux differentiable IVF $\textbf{F}$ and a G\^{a}teaux differentiable vector-valued function $G$, is not $gH$-G\^{a}teaux differentiable and even not $gH$-directional differentiable in any direction $v\neq 0$.
		
		\begin{example}\label{ex31}
			Consider the IVF $\textbf{F}:\mathbb{R}^2 \rightarrow I(\mathbb{R})$ defined by
			\[	\textbf{F}(x, y)=
			\begin{cases}
			\left( \frac{x^6}{(y-x^2)^2+x^8}\right) \odot [2,~6], & \text{if } (x,y) \neq (0, 0),\\
			\textbf{0}, & \text{otherwise},
			\end{cases}	\]
			and the vector-valued function $G:\mathbb{R} \rightarrow \mathbb{R}^2$ by $G(x)=(x, x^2)$ for all $x \in \mathbb{R}$.\\
			It is clear that $G$ is G\^{a}teaux differentiable function at $\bar{x}=0$ in every direction. Note that $\bar{y}=G(\bar{x})=(0, 0)$ and for any $h \in \mathbb{R}^2$, we have
			\begin{eqnarray*}
				\lim_{\substack{%
						\lambda \to 0+\\
					}}\frac{1}{\lambda}\odot\left(\textbf{F}(\bar{y}+\lambda h)\ominus_{gH}\textbf{F}(\bar{y})\right)
					&=&	\lim_{\substack{%
							\lambda \to 0+\\
						}}\frac{1}{\lambda}\odot\left(	\left( \frac{\lambda^6h_1^6}{(\lambda h_2-\lambda^2 h_1^2)^2+\lambda ^8 h_1^8}\right) \odot [2,~6] \right) = \textbf{0}.
					\end{eqnarray*}
					Then, due to the linearity and $gH$-continuity of the limit value, $\textbf{F}$ is also $gH$-G\^{a}teaux differentiable IVF at $ \bar{y}=G(\bar{x})$.\\
					The composition of $\textbf{F}$ and $G$ is
					\[\textbf{H}(x)=(\textbf{F}\circ G)(x)=
					\begin{cases}
					\left( \frac{1}{x^2}\right) \odot [2,~6], & \text{if } (x,y) \neq (0, 0),\\
					\textbf{0}, & \text{otherwise}.
					\end{cases}	\]
					Since for $h\neq0$,
					\[\lim_{\lambda \to 0+}\frac{1}{\lambda}\odot\left(\textbf{H}(\bar{x}+\lambda h)\ominus_{gH}\textbf{H}(\bar{x})\right)=\lim_{\lambda \to 0+}\frac{1}{\lambda^3h}\odot[2, 6]\]
					does not exist, $\textbf{H}=\textbf{F}\circ G$ is not $gH$-directional differentiable IVF at $G(\bar{x})=0$ in any direction $h\neq 0$.
					
				\end{example}	
				
				\begin{theorem} \label{th33dd}
					Let $I$ be a finite set of indices and $\textbf{F}_i: \mathcal{X} \rightarrow I(\mathbb{R})$ be a family of IVFs such that ${\textbf{F}} _{i _\mathscr{H}}(\bar{x})(h)$\\ exists for all $h \in \mathcal{X}.$ For each $x\in \mathcal{X}$, let the intervals in $\{\textbf{F}_i(x): i \in I\}$ be comparable. If $\textbf{F}(x) = \underset{i \in I}{\max}~ \textbf{F}_i(x)$ for all $x \in \mathcal{X}$, then,
					\[
					\textbf{F}_{\mathscr{H}}(\bar{x})(h) = \underset{i \in \mathcal{A}(\bar{x})}{\max}~ {\textbf{F}} _{i _\mathscr{H}}(\bar{x})(h), \text{ where } \mathcal{A}(\bar{x}) = \{ i \in I: \textbf{F}_{i}(\bar{x}) = \textbf{F}(\bar{x})\}.
					\]
				\end{theorem}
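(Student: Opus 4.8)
The plan is to localize the maximum to the active set near $\bar{x}$, to turn the difference quotient of $\textbf{F}$ into a maximum of the difference quotients of the active $\textbf{F}_i$, and then to pass to the limit. The two order-theoretic facts I would record first are that, for a fixed interval $\textbf{C}$ and $\lambda>0$, both maps $\textbf{A}\mapsto\textbf{A}\ominus_{gH}\textbf{C}$ and $\textbf{A}\mapsto\frac{1}{\lambda}\odot\textbf{A}$ are monotone with respect to $\preceq$ (immediate from the endpoint formula for $\ominus_{gH}$ together with monotonicity of $\min$ and $\max$ on $\mathbb{R}$). Hence each of these maps carries a $\preceq$-comparable family to a comparable family and commutes with the operation $\max$ of Definition \ref{max}. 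Since $\{\textbf{F}_i(x)\}$ is comparable at every $x$, its maximum is the $\preceq$-largest member, so $\textbf{F}(x)=[\max_{i}\underline{f}_i(x),\ \max_{i}\overline{f}_i(x)]$, i.e. $\underline{f},\overline{f}$ are the pointwise maxima of the endpoint functions.

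Next comes the reduction step. By Theorem \ref{the1} each $\textbf{F}_i$ is $gH$-continuous at $\bar{x}$, so the endpoint functions $\underline{f}_i,\overline{f}_i$ are continuous there. For $i\notin\mathcal{A}(\bar{x})$ we have $\textbf{F}_i(\bar{x})\prec\textbf{F}_j(\bar{x})=\textbf{F}(\bar{x})$ for any $j\in\mathcal{A}(\bar{x})$, so at least one endpoint inequality is strict. I would argue that a strict inequality in one endpoint at $\bar{x}$ forces $\textbf{F}_i(x)\preceq\textbf{F}_j(x)$ for $x$ near $\bar{x}$: if the other endpoint tried to overtake on the neighbourhood, the two intervals would become incomparable, contradicting the standing hypothesis. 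Using finiteness of $I$ and intersecting the finitely many neighbourhoods, I obtain a neighbourhood $U$ of $\bar{x}$ on which every inactive index is dominated by the active maximum, whence $\textbf{F}(x)=\max_{i\in\mathcal{A}(\bar{x})}\textbf{F}_i(x)$ on $U$.

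The key identity follows on $U$ using $\textbf{F}_i(\bar{x})=\textbf{F}(\bar{x})$ for $i\in\mathcal{A}(\bar{x})$: the monotonicity facts let me pull the common subtraction $\ominus_{gH}\textbf{F}(\bar{x})$ and the positive scaling through the maximum,
\[\frac{1}{\lambda}\odot\big(\textbf{F}(\bar{x}+\lambda h)\ominus_{gH}\textbf{F}(\bar{x})\big)=\max_{i\in\mathcal{A}(\bar{x})}\ \frac{1}{\lambda}\odot\big(\textbf{F}_i(\bar{x}+\lambda h)\ominus_{gH}\textbf{F}_i(\bar{x})\big).\]
Letting $\lambda\to0+$ and $h\to v$, each inner quotient tends to $\textbf{F}_{i_\mathscr{H}}(\bar{x})(v)$; since the maximum of a finite comparable family is taken endpointwise and norm convergence is endpointwise, $\max$ commutes with the limit and yields $\textbf{F}_\mathscr{H}(\bar{x})(v)=\max_{i\in\mathcal{A}(\bar{x})}\textbf{F}_{i_\mathscr{H}}(\bar{x})(v)$. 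Along the way I would also check, via the same monotonicity of $\ominus_{gH}$ and positive scaling applied to the relation $\textbf{F}_i(\bar{x}+\lambda h)\preceq\textbf{F}_j(\bar{x}+\lambda h)$, that the family $\{\textbf{F}_{i_\mathscr{H}}(\bar{x})(v)\}_{i\in\mathcal{A}(\bar{x})}$ is comparable, so that the maximum on the right is well defined.

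The step I expect to be the main obstacle is the remaining requirement in Definition \ref{derivative} that $\textbf{F}_\mathscr{H}(\bar{x})$ be a \emph{linear} IVF in the sense of Definition \ref{linear}. The displayed identity gives existence of the limit and its value, but linearity does not follow formally from linearity of each $\textbf{F}_{i_\mathscr{H}}(\bar{x})$: positive homogeneity extended to $\alpha<0$, namely $\textbf{F}_\mathscr{H}(\bar{x})(\alpha v)=\alpha\odot\textbf{F}_\mathscr{H}(\bar{x})(v)$, is delicate because multiplication by a negative scalar reverses $\preceq$ and therefore interchanges $\max$ with $\min$. I would therefore concentrate the hard work here, attempting to show that the everywhere-comparability of $\{\textbf{F}_i\}$ (not merely comparability at $\bar{x}$) forces the active derivatives $\{\textbf{F}_{i_\mathscr{H}}(\bar{x})\}_{i\in\mathcal{A}(\bar{x})}$ to be mutually compatible enough for $\max_{i\in\mathcal{A}(\bar{x})}\textbf{F}_{i_\mathscr{H}}(\bar{x})$ to satisfy Definition \ref{linear}; this is precisely the place where the global comparability hypothesis, rather than the local dominance used in the reduction, must be invoked decisively.
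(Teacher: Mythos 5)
Your argument follows essentially the same route as the paper's proof: both rest on (a) localizing the active set near $\bar{x}$ --- the paper proves the claim that $\mathcal{A}(x)\subseteq\mathcal{A}(\bar{x})$ on a neighbourhood of $\bar{x}$, using $gH$-continuity of the $\textbf{F}_i$ and finiteness of $I$, which is exactly what your endpoint/incomparability argument delivers --- and (b) monotonicity of $\textbf{A}\mapsto\textbf{A}\ominus_{gH}\textbf{C}$, of $\textbf{C}\mapsto\textbf{A}\ominus_{gH}\textbf{C}$, and of positive scaling with respect to $\preceq$. The packaging differs: the paper establishes the two opposite inequalities $\max_{i\in\mathcal{A}(\bar{x})}\textbf{F}_{i_\mathscr{H}}(\bar{x})(h)\preceq\textbf{F}_{\mathscr{H}}(\bar{x})(h)$ (from $\textbf{F}_i\preceq\textbf{F}$ pointwise) and $\textbf{F}_{\mathscr{H}}(\bar{x})(h)\preceq\max_{i\in\mathcal{A}(\bar{x})}\textbf{F}_{i_\mathscr{H}}(\bar{x})(h)$ (from the neighbourhood claim), whereas you prove an exact identity between the difference quotient of $\textbf{F}$ and the maximum of the active difference quotients, and then let the finite, endpointwise $\max$ commute with the limit. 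Your version buys something concrete: it proves that the limit defining $\textbf{F}_{\mathscr{H}}(\bar{x})(h)$ exists, while both chains of inequalities in the paper tacitly treat that limit as an already existing quantity; strictly speaking, the paper proves only the displayed formula, not the differentiability.

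The obstacle you flag in your last paragraph is genuine, but two things should be said about it. First, the paper does not address it at all: linearity of the limit map, required by Definition \ref{derivative}, is never verified. Second, it cannot be repaired, even invoking global comparability as you propose, because it can fail. Take $\mathcal{X}=\mathbb{R}$, $\textbf{F}_1(x)=[x,x]$ and $\textbf{F}_2(x)=[-|x|,\,|x|]$. These are comparable at every $x$, both are $gH$-Hadamard differentiable at $\bar{x}=0$ with linear derivatives $v\mapsto[v,v]$ and $v\mapsto[-|v|,\,|v|]$, and $\mathcal{A}(0)=\{1,2\}$. Here $\textbf{F}(x)=\max_i\textbf{F}_i(x)=[x,\,|x|]$, and the limit of its difference quotients at $0$ exists in every direction and equals $[v,\,|v|]=\max_{i\in\mathcal{A}(0)}\textbf{F}_{i_\mathscr{H}}(0)(v)$, exactly as the theorem asserts; but the map $v\mapsto[v,\,|v|]$ violates condition (i) of Definition \ref{linear} for negative scalars, since $[-1,1]\neq(-1)\odot[1,1]=[-1,-1]$. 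So under the strict reading of Definition \ref{derivative}, $\textbf{F}$ is not $gH$-Hadamard differentiable at $0$ and the theorem itself is defective; under the reading the paper implicitly adopts (the conclusion is an equality of limit values), your proof is complete once you drop the final paragraph's worry. Either way, the remaining gap lies in the statement, not in your argument.
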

				
				\begin{proof}
					Let $\bar{x}\in\mathcal{X}$ and $d \in\mathcal{X}$ be such that $\bar{x}+ \lambda d \in\mathcal{X}$ for $\lambda >$ 0. Then,
					\begin{align}\label{00dd}
						&\textbf{F} _{i}(\bar{x} + \delta d)~\preceq~\textbf{F}(\bar{x} + \delta d), \quad \text{for all }i \in I \nonumber\\
						\text{or, }& \textbf{F} _{i}(\bar{x} + \delta d)\ominus _{gH} \textbf{F} (\bar{x})~\preceq~\textbf{F}(\bar{x} + \delta d) \ominus _{gH} \textbf{F} (\bar{x}),
						\quad \text{for all }i \in I \nonumber\\
						\text{or, }&\textbf{F} _{i}(\bar{x} + \delta d) \ominus _{gH} \textbf{F} _{i} (\bar{x})~\preceq~\textbf{F}(\bar{x} + \delta d) \ominus _{gH} \textbf{F} (\bar{x}), \quad \text{for each } i\in \mathcal{A}(\bar{x}) \nonumber\\
						\text{or, }&\lim_{\substack{%
								\delta \to 0+\\
								d \to h}} \frac{1}{\delta} \odot (\textbf{F} _{i}(\bar{x} + \delta d) \ominus _{gH} \textbf{F} _{i}(\bar{x}))~\preceq~\lim_{\substack{%
								\delta \to 0+\\
								d \to h}} \frac{1}{\delta} \odot
						(\textbf{F} (\bar{x} + \delta d) \ominus _{gH} \textbf{F} (\bar{x})) \nonumber\\
						\text{or, }&\underset{i \in \mathcal{A}(\bar{x})}{\max}~ {\textbf{F}} _{i _\mathscr{H}}(\bar{x})(h)~\preceq~\textbf{F}_{\mathscr{H}}(\bar{x})(h).~
					\end{align}
					To prove the reverse inequality, we claim that there exists a neighbourhood $\mathcal{N}(\bar{x})$ such that $\mathcal{A}(x) \subset \mathcal{A}(\bar{x}) \text{ for all } x \in \mathcal{N}(\bar{x})$. Assume on contrary that there exists a sequence $\{x_{k}\}$ in $\mathcal{X}$ with $x_{k}\to \bar{x}$ such that $\mathcal{A}(x_{k}) \not\subset \mathcal{A}(\bar{x})$.
					We can choose $i_{k} \in \mathcal{A}(x_{k})$ but $i_{k} \notin \mathcal{A}(\bar{x})$. Since $\mathcal{A}(x_{k})$ is closed, $i_{k} \to \bar{i}\in \mathcal{A}(x_{k})$. By $gH$-continuity of $\textbf{F}$, we have
					\[\textbf{F} _{\bar{i}}(x_{k}) = \textbf{F} (x_{k}) \implies \textbf{F} _{\bar{i}}(\bar{x}) = \textbf{F}(\bar{x}),\]
					which contradicts to $i_{k} \notin \mathcal{A}(\bar{x})$. Thus, $\mathcal{A}(x) \subset \mathcal{A}(\bar{x}) \text{ for all } x \in \mathcal{N}(\bar{x})$.\\
					Let us choose a sequence $\{\delta _{k}\},\delta_{k} \to 0$ such that $\bar{x} + \delta _{k} d \in \mathcal{N}(\bar{x})$ for all $d\in \mathcal{X}$. Then,
					\begin{align} \label{0_2dd}
						&  \textbf{F} _{i}(\bar{x})~\preceq~\textbf{F}(\bar{x}), \quad \text{for all } i \in I\nonumber \\
						\text{or, }& \textbf{F} (\bar{x} + \delta _{k} d) \ominus _{gH} \textbf{F} (\bar{x}) \preceq  \textbf{F} (\bar{x} + \delta _{k} d) \ominus _{gH} \textbf{F} _{i}(\bar{x}), 
						\quad \text{for all } i \in \mathcal{A}(\bar{x})\nonumber  \\
						\text{or, }& \textbf{F} (\bar{x} + \delta_{k} d) \ominus _{gH} \textbf{F} (\bar{x})  \preceq \textbf{F}_{i}(\bar{x} + \delta_{k} d) \ominus _{gH} \textbf{F}_{i}(\bar{x}), \quad \text{for all } i\in \mathcal{A}(\bar{x}+\delta_k d) \nonumber \\
						\text{or, }& 	\lim_{\substack{%
								k \to \infty\\
								d \to h}} \frac{1}{\delta_{k}} \odot (\textbf{F} (\bar{x} + \delta_{k} d) \ominus _{gH} \textbf{F} (\bar{x})) \preceq \lim_{\substack{%
								k \to \infty\\
								d \to h}}\frac{1}{\delta_{k}} \odot (\textbf{F}_{i}(\bar{x} + \delta_{k} d) \ominus _{gH} \textbf{F}_{i}(\bar{x})) \nonumber \\
						\text{or, }& \textbf{F}_{\mathscr{H}}(\bar{x})(h) \preceq  \underset{i \in \mathcal{A}(\bar{x})}{\max}~ {\textbf{F}} _{i _\mathscr{H}}(\bar{x})(h).
					\end{align}
					From (\ref{00dd}) and (\ref{0_2dd}) , we obtain
					\[
					\textbf{F} _{\mathscr{H}}(\bar{x})(h) = \max \textbf{F} _{i_\mathscr{H}}(\bar{x})(h) \text{ for all } i\in \mathcal{A}(\bar{x}).	\]
				\end{proof}


				\section{Characterization of Efficient Solutions}\label{section5}
				
				In this section, we present some characterizations of efficient solutions for IOPs with the help of the properties of $gH$-Hadamard differentiable IVFs.
				
				\begin{theorem}\label{tgd}$($\emph{Sufficient condition for efficient points}$)$.
					Let $\mathcal{S}$ be a nonempty convex subset of $\mathcal{X}$ and $\textbf{F}: \mathcal{S} \rightarrow I(\mathbb{R})$ be a convex IVF. If the function $\textbf{F}$ has a $gH$-Hadamard derivative at $\bar{x} \in \mathcal{S}$ in the  direction $v-\bar{x}$ with
					\begin{equation}\label{cep1}
						\textbf{F}_\mathscr{H}(\bar{x})(v-\bar{x})\nprec \textbf{0}, \quad \text{for all } v \in \mathcal{X},
					\end{equation}
					then $\bar{x}$ must be an efficient point of the IOP (\ref{IOP}).
				\end{theorem}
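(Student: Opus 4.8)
The goal is to verify that $\bar{x}$ satisfies Definition \ref{efficient_point_def}, i.e.\ that $\textbf{F}(v)\nprec\textbf{F}(\bar{x})$ for every $v\in\mathcal{S}$. The plan is to chain together the convexity estimate already available for $gH$-Hadamard differentiable IVFs with the hypothesis (\ref{cep1}), using part (\ref{31}) of Lemma \ref{forfrechet} as the bridge, and finally to translate the resulting dominance statement about the $gH$-difference into a dominance statement between the function values themselves.

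First I would invoke the inequality (\ref{gc}) established inside the proof of Theorem \ref{eth33}: since $\mathcal{S}$ is convex and $\textbf{F}$ is a convex IVF possessing a $gH$-Hadamard derivative at $\bar{x}$,
\[
\textbf{F}_\mathscr{H}(\bar{x})(v-\bar{x})\preceq \textbf{F}(v)\ominus_{gH}\textbf{F}(\bar{x}), \quad \text{ for all } v\in\mathcal{S}.
\]
It is worth noting that the weaker $\nprec$ form appearing in the statement of Theorem \ref{eth33} would not suffice here, because $\nprec$ is not transitive; the $\preceq$ form from (\ref{gc}) is exactly what can be fed into Lemma \ref{forfrechet}.

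Next, fixing an arbitrary $v\in\mathcal{S}\subseteq\mathcal{X}$, I would set $\textbf{B}=\textbf{F}_\mathscr{H}(\bar{x})(v-\bar{x})$ and $\textbf{A}=\textbf{F}(v)\ominus_{gH}\textbf{F}(\bar{x})$. The hypothesis (\ref{cep1}) gives $\textbf{B}\nprec\textbf{0}$, while the displayed inequality gives $\textbf{B}\preceq\textbf{A}$. Part (\ref{31}) of Lemma \ref{forfrechet} then yields $\textbf{A}\nprec\textbf{0}$, that is,
\[
\textbf{F}(v)\ominus_{gH}\textbf{F}(\bar{x})\nprec\textbf{0}, \quad \text{ for all } v\in\mathcal{S}.
\]

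It remains to convert this into $\textbf{F}(v)\nprec\textbf{F}(\bar{x})$. Writing $p=\underline{f}(v)-\underline{f}(\bar{x})$ and $q=\overline{f}(v)-\overline{f}(\bar{x})$, one has $\textbf{F}(v)\ominus_{gH}\textbf{F}(\bar{x})=[\min\{p,q\},\max\{p,q\}]$, and a short check against Definition \ref{interval_dominance}(ii) establishes the equivalence $\textbf{F}(v)\prec\textbf{F}(\bar{x})\iff \textbf{F}(v)\ominus_{gH}\textbf{F}(\bar{x})\prec\textbf{0}$; negating both sides gives $\textbf{F}(v)\ominus_{gH}\textbf{F}(\bar{x})\nprec\textbf{0}\iff\textbf{F}(v)\nprec\textbf{F}(\bar{x})$. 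Hence $\textbf{F}(v)\nprec\textbf{F}(\bar{x})$ for all $v\in\mathcal{S}$, and so $\bar{x}$ is an efficient point of the IOP (\ref{IOP}). The only genuinely computational step is this final equivalence, whose verification is a small case analysis on the $\min$/$\max$ defining the $gH$-difference; everything preceding it is a direct application of the cited lemma and theorem, so I anticipate no real obstacle beyond carefully handling the boundary cases (where one of $p,q$ equals zero) in that case analysis.
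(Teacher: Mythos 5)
Your proof is correct, but it follows a genuinely different route from the paper's own proof of this theorem. The paper argues by contradiction: assuming $\bar{x}$ is not efficient, it uses convexity to deduce $\textbf{F}(\bar{x}+\lambda(y-\bar{x}))\ominus_{gH}\textbf{F}(\bar{x})\prec\textbf{0}$, passes to the limit to get $\textbf{F}_\mathscr{H}(\bar{x})(v-\bar{x})\preceq\textbf{0}$ (note the strict dominance necessarily weakens to $\preceq$ in the limit), and then splits into two cases: if the derivative equals $\textbf{0}$, it invokes Lemma \ref{lc1} and the convexity of $\underline{f},\overline{f}$ to conclude $\bar{x}$ minimizes both endpoint functions and is therefore efficient (contradicting the assumption); if the derivative is $\prec\textbf{0}$, this contradicts (\ref{cep1}). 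Your direct argument --- inequality (\ref{gc}), then Lemma \ref{forfrechet}(\ref{31}), then the equivalence $\textbf{F}(v)\ominus_{gH}\textbf{F}(\bar{x})\prec\textbf{0}\iff\textbf{F}(v)\prec\textbf{F}(\bar{x})$ --- is in fact exactly the strategy the paper itself deploys later for Theorem \ref{kkts} (the KKT sufficient condition), of which this theorem is the unconstrained special case; your version even fills in the final min/max equivalence that the proof of Theorem \ref{kkts} asserts without justification. What your route buys is the avoidance of both the contradiction framework and the case split, and your observation that the $\preceq$ form (\ref{gc}) rather than the $\nprec$ conclusion of Theorem \ref{eth33} is what must be used is exactly right, since $\nprec$ cannot be chained. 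One caveat: (\ref{gc}) is derived in the paper under the hypotheses of Theorem \ref{eth33} ($\mathcal{S}\subseteq\mathbb{R}^n$, differentiability at every point), whereas here $\mathcal{X}$ is a general normed space and differentiability is assumed only at $\bar{x}$; you should note that the derivation of (\ref{gc}) still goes through in this setting by fixing $h=v$ in the convexity inequality and using that existence of the $gH$-Hadamard derivative implies existence of the $gH$-directional derivative with the same value, so that no $gH$-continuity of $\textbf{F}$ at $v$ is needed. Since the paper commits the same reuse of (\ref{gc}) in Theorem \ref{kkts}, this is a gap in the paper's practice as much as in yours, and it is easily repaired.
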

				
				\begin{proof}
					Assume that $\bar{x}$ is not an efficient point of $\textbf{F}$. Then, there exists at least one $y\in \mathcal{S}$ such that for any $\lambda\in (0,1]$, we have
					\begin{align*}
						& \lambda\odot\textbf{F}(y)\prec\lambda\odot\textbf{F}(\bar{x}), \\
						\text{or, } & \lambda\odot\textbf{F}(y)\oplus\lambda'\odot\textbf{F}(\bar{x})\prec\lambda\odot\textbf{F}(\bar{x})\oplus\lambda'\odot\textbf{F}(\bar{x}),~\text{where}~\lambda'=1-\lambda, \\
						\text{or, }& \lambda\odot\textbf{F}(y)\oplus\lambda'\odot\textbf{F}(\bar{x})\prec(\lambda+\lambda')\odot\textbf{F}(\bar{x})=\textbf{F}(\bar{x}).
					\end{align*}
					Due to the convexity of $\textbf{F}$ on $\mathcal{S}$, we have
					\begin{align}\label{k}
						& \textbf{F}(\bar{x}+\lambda (y-\bar{x}))=\textbf{F}(\lambda y+\lambda'\bar{x})\preceq\lambda\odot\textbf{F}(y)\oplus\lambda'\odot\textbf{F}(\bar{x})\prec\textbf{F}(\bar{x}), \nonumber\\
						\text{or, } & \textbf{F}(\bar{x}+\lambda (y-\bar{x}))\ominus_{gH}\textbf{F}(\bar{x})\prec \textbf{0}, \nonumber \\
						\text{or, } & \textbf{F}_\mathscr{H}(\bar{x})(v-\bar{x})\preceq \textbf{0}.
					\end{align}
					Now we have the following two possibilities.
					\begin{enumerate}[$\bullet$ \textbf{Case} I:]
						\item If $\textbf{F}_\mathscr{H}(\bar{x})(v-\bar{x})= \textbf{0}$, then $\textbf{F}_\mathscr{D}(\bar{x})(v-\bar{x})= \textbf{0}$ and
						\begin{equation}\label{ci}
							\underline{f}_\mathscr{D}(\bar{x})(v-\bar{x})=0 \text{ and } \overline{f}_\mathscr{D}(\bar{x})(v-\bar{x})=0.
						\end{equation}
						Due to Lemma \ref{lc1}, $\underline{f}$ and $\overline{f}$ are convex on $\mathcal{S}$. From (\ref{ci}), we observe that $\bar{x}$ is a minimum point of $\underline{f}$ and $\overline{f}$. Consequently, $\bar{x}$ is an efficient point of $\textbf{F}$. This contradicts to our assumption that $\bar{x}$ is not efficient point of $\textbf{F}$.
						
						\item If $\textbf{F}_\mathscr{H}(\bar{x})(v-\bar{x}) \prec \textbf{0}$, then this contradicts the assumption that $\textbf{F}_\mathscr{H}(\bar{x})(v-\bar{x})\nprec \textbf{0}$ for all $v\in \mathcal{X}$.
					\end{enumerate}
					Hence, $\bar{x}$ is the efficient point of the IOP (\ref{IOP}).
				\end{proof}
				
				\begin{remark}\label{ne1}
					The relation (\ref{cep1}) can be seen as a variational inequality for interval-valued functions. For details as variational inequalities, we refer \cite{ansari2013generalized}. The converse of Theorem \ref{tgd} is not true. For example, consider  $\mathcal{X}= \mathbb{R}$, $\mathcal{S}=[-1, 2]$, and the convex IVF $\textbf{F}: \mathcal{S} \rightarrow I(\mathbb{R})$ defined by
					\[\textbf{F}(x)=[4x^2-4x+1, 2x^2+75].\]
					At $\bar{x}=0$ and for $v\in \mathcal{X}$, $\textbf{F}_\mathscr{H}(\bar{x})(v)=v\odot [-4, 0]$ for all $v\in \mathcal{X}$.\\
					From Figure \ref{fexcd1}, it is clear that $\bar{x}=0$ is an efficient solution of the IOP (\ref{IOP}).  However, for all $v>0$ we have $\textbf{F}_\mathscr{H}(\bar{x})(v) \prec \textbf{0}.$
					\begin{figure}[H]
						\begin{center}
							\includegraphics[scale=0.7]{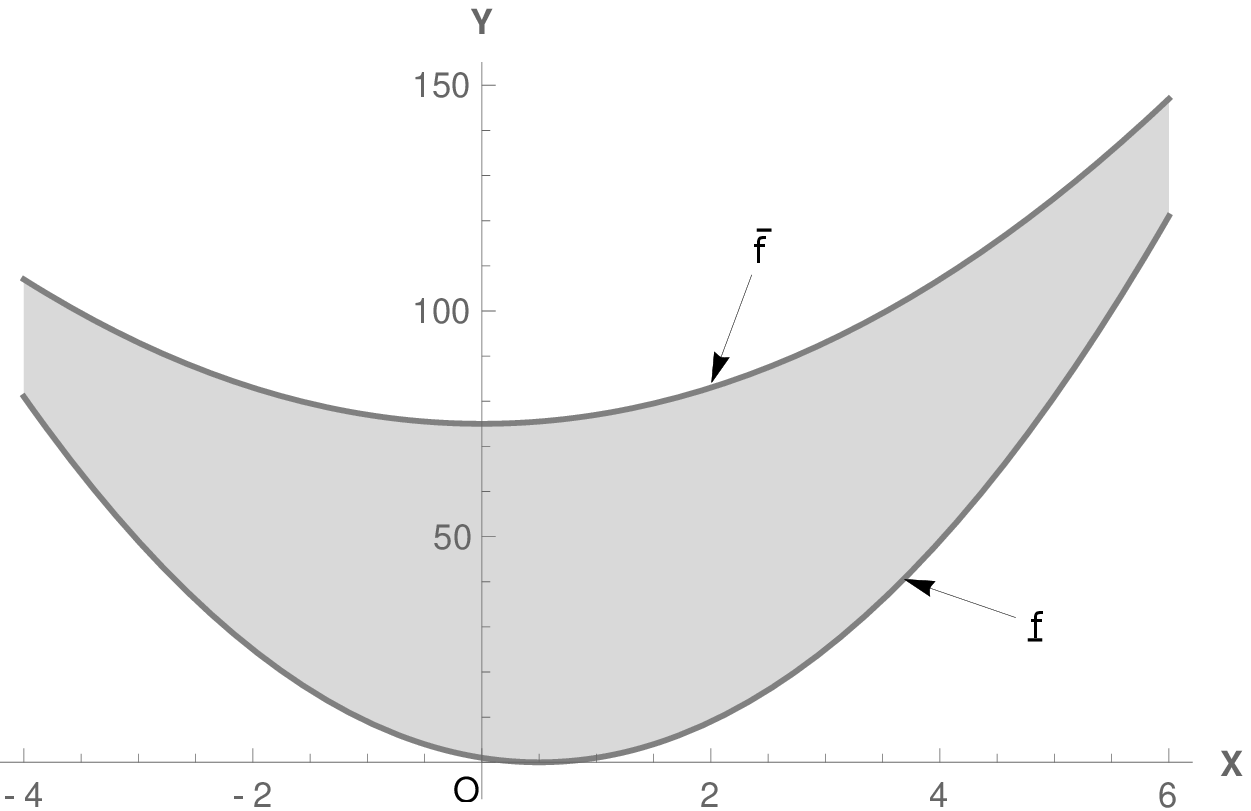}
							\caption{IVF $\textbf{F}$ of Remark \ref{ne1}}\label{fexcd1}
						\end{center}
					\end{figure}
					
				\end{remark}

				\begin{theorem}\label{eeth42} $($\emph{Necessary condition for efficient points}$)$.
					Let $\mathcal{S}$ be a linear subspace of $\mathcal{X}$, $\textbf{F}: \mathcal{S} \rightarrow I(\mathbb{R})$ be an IVF and $\bar{x}\in \mathcal{S}$ be an efficient point of the IOP (\ref{IOP}). If the function $\textbf{F}$ has a $gH$-Hadamard derivative at $\bar{x}$ in every direction $v\in \mathcal{S}$, then \[
					\textbf{F}_\mathscr{H}(\bar{x})(v-\bar{x})\nprec \textbf{0}, \quad \text{for all  } v\in \mathcal{S}.
					\]
				\end{theorem}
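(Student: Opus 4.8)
The statement is the necessary counterpart of Theorem \ref{tgd}, so the plan is to argue by contradiction along the ray joining $\bar{x}$ to a feasible point. Suppose, to the contrary, that there is a point $v_0\in\mathcal{S}$ for which $\textbf{F}_\mathscr{H}(\bar{x})(v_0-\bar{x})\prec\textbf{0}$, and set $w=v_0-\bar{x}$. Since $\mathcal{S}$ is a linear subspace and $\bar{x},v_0\in\mathcal{S}$, we have $w\in\mathcal{S}$ and $\bar{x}+\lambda w\in\mathcal{S}$ for every $\lambda$, so the ray stays feasible. Because $\textbf{F}$ is $gH$-Hadamard differentiable at $\bar{x}$, Remark \ref{r2} guarantees that the $gH$-directional derivative exists and coincides with the Hadamard derivative; restricting the defining limit to the fixed direction $h\equiv w$ therefore gives
\[
\textbf{F}_\mathscr{H}(\bar{x})(w)=\lim_{\lambda\to 0+}\frac{1}{\lambda}\odot\big(\textbf{F}(\bar{x}+\lambda w)\ominus_{gH}\textbf{F}(\bar{x})\big).
\]

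The next step is to transfer the strict dominance from the limit to the difference quotients. First I would record the elementary equivalence $\textbf{A}\prec\textbf{B}\iff\textbf{A}\ominus_{gH}\textbf{B}\prec\textbf{0}$, which follows directly from Definition \ref{interval_dominance} and the closed-form expression for $\ominus_{gH}$. If one can show that $\textbf{F}_\mathscr{H}(\bar{x})(w)\prec\textbf{0}$ forces
\[
\frac{1}{\lambda}\odot\big(\textbf{F}(\bar{x}+\lambda w)\ominus_{gH}\textbf{F}(\bar{x})\big)\prec\textbf{0}
\]
for all sufficiently small $\lambda>0$, then multiplying through by $\lambda>0$ (which preserves $\prec$, since positive scalar multiplication scales both endpoints) yields $\textbf{F}(\bar{x}+\lambda w)\ominus_{gH}\textbf{F}(\bar{x})\prec\textbf{0}$, i.e.\ $\textbf{F}(\bar{x}+\lambda w)\prec\textbf{F}(\bar{x})$. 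As $\bar{x}+\lambda w\in\mathcal{S}$, this contradicts the efficiency of $\bar{x}$ (Definition \ref{efficient_point_def}), and the claim would follow.

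The main obstacle is precisely this transfer step: unlike inequalities in $\mathbb{R}$, the relation $\prec\textbf{0}$ is \emph{not} closed under limits, because the upper endpoint of the quotient can tend to $0$ from above while the lower endpoint stays bounded away from $0$, so the limiting interval may satisfy $\prec\textbf{0}$ even though no individual quotient does. To handle this I would pass to the endpoint functions, writing $\textbf{F}_\mathscr{H}(\bar{x})(w)=[\min\{L,U\},\max\{L,U\}]$ with $L=\underline{f}_\mathscr{D}(\bar{x})(w)$ and $U=\overline{f}_\mathscr{D}(\bar{x})(w)$, and split on the signs of $L$ and $U$. When $\max\{L,U\}<0$ both endpoints of the quotient are eventually negative and the transfer is immediate. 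The delicate case is $\max\{L,U\}=0>\min\{L,U\}$: there the endpoint realising the maximum has vanishing one-sided derivative, so whether $\textbf{F}(\bar{x}+\lambda w)$ is genuinely dominated by $\textbf{F}(\bar{x})$ hinges on higher-order behaviour that the first-order hypothesis does not control. I expect this boundary case to be the crux, and I would try to close it by exploiting the linearity of $\textbf{F}_\mathscr{H}(\bar{x})$ (via Lemma \ref{nb}) together with the feasibility of both $\bar{x}\pm\lambda w$ afforded by the subspace $\mathcal{S}$, mirroring the way convexity was used to remove the analogous gap in Theorem \ref{tgd}.
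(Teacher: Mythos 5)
Your setup is correct and you have put your finger on exactly the right difficulty, but the proof is incomplete at that point, and the gap cannot be closed: the boundary case you isolate --- $\textbf{F}_\mathscr{H}(\bar{x})(w)=[\underline{g},0]$ with $\underline{g}<0$, the upper endpoints of the quotients tending to $0$ from above --- is not a removable technicality; it actually occurs at efficient points, so the statement itself fails as formulated. Take $\mathcal{X}=\mathcal{S}=\mathbb{R}$, $\bar{x}=0$, and
\[
\textbf{F}(x)=\left[-x-\tfrac{1}{4},\ x^{2}\right],
\]
a well-defined (even convex) IVF since $x^{2}-\left(-x-\tfrac{1}{4}\right)=\left(x+\tfrac{1}{2}\right)^{2}\ge 0$. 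The point $\bar{x}=0$ is efficient, because $\textbf{F}(x)\prec\textbf{F}(0)$ would force $x^{2}\le 0$. The difference quotient in direction $h$ is $\left[\min\{-h,\lambda h^{2}\},\ \max\{-h,\lambda h^{2}\}\right]$, so the limit in Definition \ref{derivative} exists in every direction and equals $\textbf{F}_\mathscr{H}(0)(w)=w\odot[-1,0]$, which is linear in the sense of Definition \ref{linear} (it has the same form $\ell(w)\odot \textbf{C}$ as the derivative in Example \ref{ee1}). Yet $\textbf{F}_\mathscr{H}(0)(w)=[-w,0]\prec\textbf{0}$ for every $w>0$: all hypotheses of Theorem \ref{eeth42} hold and the conclusion fails. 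Note that each quotient $[-h,\lambda h^{2}]$ is $\nprec\textbf{0}$, exactly as efficiency demands, while its limit is $\prec\textbf{0}$ --- precisely the phenomenon you warned about. (This is essentially the paper's own Remark \ref{in} example transplanted onto a linear subspace; the subspace hypothesis does not repair it.)

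You should also know that the rescue you hoped for --- ``exploit the linearity of $\textbf{F}_\mathscr{H}(\bar{x})$ via Lemma \ref{nb} together with feasibility of $\bar{x}\pm\lambda w$'' --- is exactly what the paper's own proof attempts, and it is where that proof breaks down. The paper distinguishes the cases $\textbf{F}_\mathscr{H}(\bar{x})(v-\bar{x})\preceq\textbf{0}$ and $\textbf{F}_\mathscr{H}(\bar{x})(v-\bar{x})\nprec\textbf{0}$ and, in the first, invokes (\ref{nb2}) of Lemma \ref{nb} to conclude the derivative equals $\textbf{0}$. But that lemma's hypothesis is global ($\textbf{F}(x)\preceq\textbf{0}$ for \emph{all} $x$ in the subspace; its proof needs $\textbf{F}(-x)\preceq\textbf{0}$ as well as $\textbf{F}(x)\preceq\textbf{0}$), whereas the case assumption holds only in the single direction $v-\bar{x}$. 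In the example above, $\textbf{F}_\mathscr{H}(0)(w)=[-w,0]\preceq\textbf{0}$ for $w>0$ while $\textbf{F}_\mathscr{H}(0)(-w)=[0,w]\npreceq\textbf{0}$, so the lemma is inapplicable and its conclusion false there; moreover, the paper's two cases are not exhaustive, and the efficiency relation displayed at the start of its proof is never used afterwards. In short: your proposal is incomplete exactly where you said it was, the paper closes that same gap only by a fallacious application of Lemma \ref{nb}, and the counterexample shows the gap is genuine. What does survive is the weaker conclusion $0\in\textbf{F}_\mathscr{H}(\bar{x})(v)$ for all $v$ (the limit argument forces the upper endpoint of the derivative to be $\ge 0$, and linearity then forces the lower endpoint to be $\le 0$), but not $\textbf{F}_\mathscr{H}(\bar{x})(v-\bar{x})\nprec\textbf{0}$.
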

				\begin{proof}
					Since the point $\bar{x}$ is an efficient point of the function $\textbf{F}$, for any $h \in \mathcal{S}$ and $\lambda > 0$, we have
					\begin{align}\label{algn1}
						& \textbf{F}(\bar{x}+\lambda (h-\bar{x}))\ominus_{gH}\textbf{F}(\bar{x})\nprec \textbf{0}.
					\end{align}
					If $\textbf{F}_\mathscr{H}(\bar{x})(v-\bar{x}) \preceq \textbf{0}$, then due to linearity of $\textbf{F}_\mathscr{H}(\bar{x})$ on $\mathcal{S}$, we have $\textbf{F}_\mathscr{H}(\bar{x})(v-\bar{x}) = \textbf{0}$ by (\ref{nb2}) of Lemma \ref{nb}. Therefore, $\textbf{F}_\mathscr{H}(\bar{x})(v-\bar{x})\nprec \textbf{0} \text{ for all  } v\in \mathcal{S}.$\\
					If $\textbf{F}_\mathscr{H}(\bar{x})(v-\bar{x}) \nprec \textbf{0}$, then the result holds.
				\end{proof}

				\begin{remark}\label{in}
					One may think that in Theorem \ref{eeth42}, instead of considering the fact that the IVF $\textbf{F}$ is defined on a linear subspace of $\mathcal{S}$, we may take $\textbf{F}$ being defined on any nonempty convex subset of $\mathcal{S}$. However, this assumption is not sufficient. For instance, consider $\mathcal{X} = \mathbb{R}$, $\mathcal{S}=[-1, 7]$, and the convex IVF $\textbf{F}: \mathcal{S} \rightarrow I(\mathbb{R})$ defined by $\textbf{F}(x)= [x^2-4x+4, x^2+5].$ Then at $\bar{x}\in \mathcal{S}$, $\textbf{F}_\mathscr{H}(\bar{x})(v)=2v\odot [\bar{x}-2, \bar{x}]$ for all $v \in \mathcal{X}$. Note that $\bar{x}=0$ is an efficient point of IOP (\ref{IOP}) because $\textbf{F}(y) \nprec \textbf{F}(\bar{x}) \text{ for all } y \in \mathcal{S}$.
					However, $ \textbf{F}_\mathscr{H}(\bar{x})(v) \prec \textbf{0} $ for all $v>0$.
				\end{remark}

				\begin{theorem}\label{eeth42r}
					Let $\mathcal{S}$ be a nonempty subset of $\mathcal{X}$, $\textbf{F}: \mathcal{S} \rightarrow I(\mathbb{R})$ be an IVF, and $\bar{x}\in \mathcal{S}$ be an efficient point of the IOP (\ref{IOP}). If the IVF $\textbf{F}$ has a $gH$-Hadamard derivative at $\bar{x}$ in every direction $v\in \mathcal{S}$, then there exist no $ v\in \mathcal{S}$ such that $\textbf{F}_\mathscr{H}(\bar{x})(v-\bar{x}) < \textbf{0}.$
				\end{theorem}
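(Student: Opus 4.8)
The plan is to argue by contradiction, exploiting the fact that the relation $<$ on intervals is an \emph{open} condition: by Lemma \ref{rr} it amounts to two strict inequalities on the endpoints, so it survives the passage from a limit value back to the terms of an approximating sequence. This openness is exactly what lets me avoid the linearity hypothesis used in Theorem \ref{eeth42} and work with an arbitrary nonempty $\mathcal{S}$.

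First I would suppose, to the contrary, that there exists $v\in\mathcal{S}$ with $\textbf{F}_\mathscr{H}(\bar{x})(v-\bar{x}) < \textbf{0}$. Writing $\textbf{F}_\mathscr{H}(\bar{x})(v-\bar{x}) = [\underline{\ell},\overline{\ell}]$, Lemma \ref{rr} gives $\underline{\ell}<0$ and $\overline{\ell}<0$. Using Remark \ref{re1} with the \emph{fixed} sequence $h_n = v-\bar{x}$ and any $\lambda_n\to 0+$, the intervals $\textbf{C}_n = \frac{1}{\lambda_n}\odot\left(\textbf{F}(\bar{x}+\lambda_n(v-\bar{x}))\ominus_{gH}\textbf{F}(\bar{x})\right)$ converge to $[\underline{\ell},\overline{\ell}]$ in the norm ${\lVert\cdot\rVert}_{I(\mathbb{R})}$, and since this norm is $\max\{|\underline{c}_n-\underline{\ell}|,|\overline{c}_n-\overline{\ell}|\}$, convergence forces both endpoints of $\textbf{C}_n$ to converge to those of the limit. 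Hence for all large $n$ both endpoints of $\textbf{C}_n$ are strictly negative, i.e. $\textbf{C}_n < \textbf{0}$.

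Next I would strip off the positive scalar $\frac{1}{\lambda_n}$: since multiplying an interval by a positive number scales both endpoints without changing their signs, $\textbf{C}_n < \textbf{0}$ is equivalent to $\textbf{F}(\bar{x}+\lambda_n(v-\bar{x}))\ominus_{gH}\textbf{F}(\bar{x}) < \textbf{0}$. I would then translate this $gH$-difference statement into a direct comparison: from the endpoint formula for $\ominus_{gH}$ together with Lemma \ref{rr}, one checks that $\textbf{A}\ominus_{gH}\textbf{B} < \textbf{0}$ holds precisely when $\underline{a}<\underline{b}$ and $\overline{a}<\overline{b}$, that is, when $\textbf{A}<\textbf{B}$. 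Applying this yields $\textbf{F}(\bar{x}+\lambda_n(v-\bar{x})) < \textbf{F}(\bar{x})$ for large $n$, and since $<$ implies $\prec$ (better strict domination is stronger than strict domination, as seen by comparing the endpoint characterizations in Definitions \ref{interval_dominance} and \ref{better}), we obtain $\textbf{F}(\bar{x}+\lambda_n(v-\bar{x}))\prec\textbf{F}(\bar{x})$.

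Finally, because the $gH$-Hadamard derivative in the direction $v-\bar{x}$ is assumed to exist at $\bar{x}$, the points $\bar{x}+\lambda_n(v-\bar{x})$ necessarily lie in $\mathcal{S}$ for all large $n$; thus I have produced a point $x\in\mathcal{S}$ with $\textbf{F}(x)\prec\textbf{F}(\bar{x})$, which contradicts the efficiency of $\bar{x}$ in the sense of Definition \ref{efficient_point_def}. This contradiction establishes that no such $v$ can exist. I expect the only delicate step to be the first one---rigorously transferring the strict negativity of the limit to the approximating intervals $\textbf{C}_n$---which rests on the equivalence between norm convergence in $I(\mathbb{R})$ and convergence of both endpoints, guaranteeing that the open condition $<$ persists for large $n$; the subsequent reductions are routine manipulations of the interval arithmetic and the dominance relations.
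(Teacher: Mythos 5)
Your proof is correct and is essentially the contrapositive of the paper's own argument: both hinge on the endpoint characterization of $<$ (Lemma \ref{rr}) and on applying efficiency to the difference quotients $\frac{1}{\lambda}\odot\left(\textbf{F}(\bar{x}+\lambda(v-\bar{x}))\ominus_{gH}\textbf{F}(\bar{x})\right)$. Where the paper passes the weak inequality (nonnegative maximum of the endpoint differences) to the limit and then invokes Lemma \ref{rr}, you pull the strict negativity of the limit back to the quotients via norm convergence of endpoints---the same mechanism run in the opposite direction, with your version spelling out the sequence argument and the domain point that the paper leaves implicit.
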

				
				\begin{proof}
					Since the point $\bar{x}$ is an efficient point of the function $\textbf{F}$, for any $h \in \mathcal{S}$ and $\lambda > 0$, we have
					\begin{align*}\label{algn1r}
						& \textbf{F}(\bar{x}+\lambda (h-\bar{x}))\ominus_{gH}\textbf{F}(\bar{x})\nprec \textbf{0}.
					\end{align*}
					This implies that
					\begin{equation}\label{r}
						\lim_{\lambda \to 0+} \frac{1}{\lambda} \max\{\underline{f}(\bar{x}+\lambda (h-\bar{x}))- \underline{f}(\bar{x}), \overline{f}(\bar{x}+\lambda (h-\bar{x}))- \overline{f}(\bar{x}) \} \geq 0.
					\end{equation}
					From (\ref{r}) and Lemma \ref{rr}, there is no $ v\in \mathcal{S}$ such that $\textbf{F}_\mathscr{H}(\bar{x})(v-\bar{x}) < \textbf{0}.$
				\end{proof}
				
				\begin{theorem}\label{eeth42ddd}.
					Let $\mathcal{S}$ be a linear subspace of $\mathcal{X}$, $\textbf{F}: \mathcal{S} \rightarrow I(\mathbb{R})$ be an IVF, and $\bar{x}\in \mathcal{S}$ be an efficient point of the IOP (\ref{IOP}). If the IVF $\textbf{F}$ has a $gH$-Hadamard derivative at $\bar{x}$ in every direction $v\in \mathcal{S}$, then \[0 \in \textbf{F}_\mathscr{H}(\bar{x})(v), \quad \text{for all } v \in \mathcal{S}.\]
					The converse holds if $\textbf{F}$ is convex on $\mathcal{X}$.
				\end{theorem}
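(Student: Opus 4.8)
The plan is to treat the two implications separately, leaning on the first-order necessary condition already proved and on the convexity machinery of Section \ref{section3}.

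For the necessary part (efficiency $\Rightarrow 0 \in \textbf{F}_\mathscr{H}(\bar{x})(v)$), I would start from Theorem \ref{eeth42}, which gives $\textbf{F}_\mathscr{H}(\bar{x})(v-\bar{x}) \nprec \textbf{0}$ for all $v \in \mathcal{S}$. Since $\mathcal{S}$ is a linear subspace containing $\bar{x}$, the set $\{v-\bar{x} : v \in \mathcal{S}\}$ is all of $\mathcal{S}$, so this reads $\textbf{F}_\mathscr{H}(\bar{x})(w) \nprec \textbf{0}$ for every $w \in \mathcal{S}$. As $\textbf{F}_\mathscr{H}(\bar{x})$ is a linear IVF, part (\ref{nb1}) of Lemma \ref{nb} shows that, for each $w$ with $\textbf{F}_\mathscr{H}(\bar{x})(w)\neq\textbf{0}$, the intervals $\textbf{0}$ and $\textbf{F}_\mathscr{H}(\bar{x})(w)$ are not comparable. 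Writing $\textbf{F}_\mathscr{H}(\bar{x})(w) = [\underline{a}, \overline{a}]$ and unwinding this at the level of endpoints, $\textbf{0} \npreceq \textbf{F}_\mathscr{H}(\bar{x})(w)$ forces $\underline{a} < 0$ and $\textbf{F}_\mathscr{H}(\bar{x})(w) \npreceq \textbf{0}$ forces $\overline{a} > 0$, so $\underline{a} < 0 < \overline{a}$ and hence $0 \in \textbf{F}_\mathscr{H}(\bar{x})(w)$; the degenerate case $\textbf{F}_\mathscr{H}(\bar{x})(w)=\textbf{0}$ is immediate since $0\in\textbf{0}$. This yields $0 \in \textbf{F}_\mathscr{H}(\bar{x})(v)$ for all $v \in \mathcal{S}$.

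For the converse (convexity $+\ 0 \in \textbf{F}_\mathscr{H}(\bar{x})(v) \Rightarrow$ efficiency) I would reuse the gradient-type inequality obtained inside the proof of Theorem \ref{eth33}, namely $\textbf{F}_\mathscr{H}(\bar{x})(v-\bar{x}) \preceq \textbf{F}(v) \ominus_{gH} \textbf{F}(\bar{x})$ for all $v$ (inequality (\ref{gc})). A direct endpoint computation with the $gH$-difference formula shows that $\textbf{F}(v) \prec \textbf{F}(\bar{x})$ is equivalent to $\textbf{F}(v) \ominus_{gH} \textbf{F}(\bar{x}) \prec \textbf{0}$, so efficiency amounts to $\textbf{F}(v) \ominus_{gH} \textbf{F}(\bar{x}) \nprec \textbf{0}$ for all $v$. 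I would get this from (\ref{31}) of Lemma \ref{forfrechet} applied with $\textbf{B} = \textbf{F}_\mathscr{H}(\bar{x})(v-\bar{x})$ and $\textbf{A} = \textbf{F}(v) \ominus_{gH} \textbf{F}(\bar{x})$, which turns $\textbf{B} \nprec \textbf{0}$ and $\textbf{B} \preceq \textbf{A}$ into $\textbf{A} \nprec \textbf{0}$. Equivalently, one argues by contradiction exactly as in Theorem \ref{tgd}: non-efficiency plus convexity forces $\textbf{F}_\mathscr{H}(\bar{x})(y-\bar{x}) \preceq \textbf{0}$ for some $y$, and the sub-case $\textbf{F}_\mathscr{H}(\bar{x})(y-\bar{x}) = \textbf{0}$ is eliminated via Lemma \ref{lc1} as in Case I there.

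The crux—and the step I expect to be the genuine obstacle—is the passage from the hypothesis $0 \in \textbf{F}_\mathscr{H}(\bar{x})(v-\bar{x})$ to the strict non-domination $\textbf{F}_\mathscr{H}(\bar{x})(v-\bar{x}) \nprec \textbf{0}$ that both routes above require. This implication is \emph{not} formal: with $\textbf{F}_\mathscr{H}(\bar{x})(v-\bar{x}) = [\underline{a}, \overline{a}]$, the containment $0 \in [\underline{a}, \overline{a}]$ still permits $\underline{a} < 0 = \overline{a}$, and then $[\underline{a}, 0] \prec \textbf{0}$. To try to close this gap I would evaluate the hypothesis also in the opposite direction $\bar{x}-v$, using linearity to write $\textbf{F}_\mathscr{H}(\bar{x})(\bar{x}-v) = -\textbf{F}_\mathscr{H}(\bar{x})(v-\bar{x})$, and combine it with the \emph{global} convexity on $\mathcal{X}$—both the pointwise bound $\underline{f}\le\overline{f}$ and the convexity of $\underline{f},\overline{f}$ from Lemma \ref{lc1}—to attempt to exclude the boundary case $\overline{a}=0,\ \underline{a}<0$. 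I would flag, however, that this boundary case is exactly the first-order profile a convex IVF can display at a point that is \emph{not} efficient, so the containment $0\in\textbf{F}_\mathscr{H}(\bar{x})(v)$ alone does not appear first-order sufficient; the honest reading is that the converse really wants the full non-domination condition $\textbf{F}_\mathscr{H}(\bar{x})(v-\bar{x}) \nprec \textbf{0}$ of Theorem \ref{eeth42} as part of its hypotheses, in which case efficiency follows at once from Theorem \ref{tgd}.
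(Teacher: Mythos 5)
Your forward direction is correct and is essentially the paper's own argument: Theorem \ref{eeth42} gives $\textbf{F}_\mathscr{H}(\bar{x})(w)\nprec\textbf{0}$ for all $w\in\mathcal{S}$ (using that $\{v-\bar{x}: v\in\mathcal{S}\}=\mathcal{S}$), then positive homogeneity applied at $-w$ gives $\textbf{F}_\mathscr{H}(\bar{x})(w)\nsucc\textbf{0}$, and the two non-dominations force either $\textbf{F}_\mathscr{H}(\bar{x})(w)=\textbf{0}$ or $\underline{a}<0<\overline{a}$, hence $0\in\textbf{F}_\mathscr{H}(\bar{x})(w)$. The paper phrases this directly rather than through Lemma \ref{nb}, but it is the same endpoint computation.

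On the converse, your diagnosis is exactly right, and it deserves to be stated bluntly: the step you refuse to make is precisely the step the paper does make. The paper's proof asserts that $0\in\textbf{F}_\mathscr{H}(\bar{x})(w)$ together with linearity yields $\textbf{F}_\mathscr{H}(\bar{x})(w)\nprec\textbf{0}$ and $\textbf{0}\nprec\textbf{F}_\mathscr{H}(\bar{x})(w)$, and then invokes Theorem \ref{tgd}. That inference is invalid for the reason you give: $0\in[\underline{a},\overline{a}]$ is compatible with $\underline{a}<0=\overline{a}$, in which case $[\underline{a},0]\prec\textbf{0}$ by Definition \ref{interval_dominance}, and linearity does not exclude this, since $\textbf{L}(w)=w\odot[-1,0]$ is a linear IVF with $0\in\textbf{L}(w)$ for every $w$ yet $\textbf{L}(w)\prec\textbf{0}$ for all $w>0$. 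Moreover, your suspicion that this boundary profile actually occurs for a convex IVF at a non-efficient point can be confirmed, so the converse as stated is false rather than merely unproven. Take $\mathcal{S}=\mathcal{X}=\mathbb{R}$ and
\[
\textbf{F}(x)=\left[\max\{-x,\;x-2\}-10,\ \max\{0,\;|x|-1\}\right],\qquad x\in\mathbb{R}.
\]
Both endpoint functions are convex and $\underline{f}\le\overline{f}$ everywhere, so $\textbf{F}$ is a convex IVF by Lemma \ref{lc1}. Near $0$ one has $\textbf{F}(x)=[-x-10,\,0]$, so the limit defining the derivative gives $\textbf{F}_\mathscr{H}(0)(v)=v\odot[-1,0]$, which is a linear IVF containing $0$ in every direction $v$; yet $\textbf{F}(1)=[-11,0]\prec[-10,0]=\textbf{F}(0)$, so $\bar{x}=0$ is not an efficient point. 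Hence your ``honest reading'' is the correct resolution: in the convex case the first-order condition that characterizes efficiency is the non-domination condition $\textbf{F}_\mathscr{H}(\bar{x})(v-\bar{x})\nprec\textbf{0}$ of Theorems \ref{tgd} and \ref{eeth42}, and the containment $0\in\textbf{F}_\mathscr{H}(\bar{x})(v)$ is strictly weaker; no amount of linearity or convexity upgrades it.
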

				
				\begin{proof}
					Let $\bar{x}$ be an efficient point of IOP (\ref{IOP}). Then, by Theorem \ref{eeth42}, we have $\textbf{F}_\mathscr{H}(\bar{x})(v)\nprec \textbf{0} \text{ for all  } v\in \mathcal{S}.$ Due to linearity of $\textbf{F}_\mathscr{H}(\bar{x})$ and $v=-h$, we obtain $\textbf{F}_\mathscr{H}(\bar{x})(h)\nsucc \textbf{0} \text{ for all  } h\in \mathcal{S}$. Hence, $0 \in \textbf{F}_\mathscr{H}(\bar{x})(v) \text{ for all  }$\\$ v\in \mathcal{S}.$
					
					Conversely, let $\textbf{F}$ be convex on $\mathcal{S}$ and assume that \textbf{F} has a $gH$-Hadamard derivative at $\bar{x}$ in every direction $w\in \mathcal{X}$. Let $\text{0} \in \textbf{F}_\mathscr{H}(\bar{x})(w) $ for all $w\in \mathcal{X}$. Then, due to linearity of $\textbf{F}_\mathscr{H}(\bar{x})$ on $\mathcal{S}$, we have		\[\textbf{F}_\mathscr{H}(\bar{x})(w) \nprec \textbf{0}~~\text{and}~~\textbf{0}\nprec\textbf{F}_\mathscr{H}(\bar{x})(w)~~\text{for all}~~w.\]
					Hence, $\bar{x}$ is efficient point of IOP (\ref{IOP}) by Theorem \ref{tgd}.
				\end{proof}
				
				
				\section{\textbf{Fritz John and Karush-Kuhn-Tucker Optimality Conditions}} \label{section5d}
				
				In this section, we derive an extended KKT necessary and sufficient optimality conditions to characterize efficient solutions of IOPs.
				\begin{lemma}\label{K1} Let $\textbf{F}:\mathbb{R}^n \to I(\mathbb{R})$ be a $gH$-Hadamard differentiable IVF at $\bar{x}$ in the direction $v\in \mathbb{R}^n$ with $\textbf{F}_\mathscr{H}(\bar{x})(v) \prec \textbf{0}$. Then, there exists $\delta >0$ such that for each $\lambda \in (0, \delta)$,
					\[\textbf{F}(\bar{x}+\lambda v) \prec \textbf{F}(\bar{x}).\]
				\end{lemma}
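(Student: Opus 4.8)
The plan is to reduce the statement to the ordinary one-sided directional derivatives of the endpoint functions $\underline{f}$ and $\overline{f}$ and then extract the sign information encoded in $\textbf{F}_\mathscr{H}(\bar{x})(v)\prec\textbf{0}$. First, since $\textbf{F}$ is $gH$-Hadamard differentiable at $\bar{x}$ in the direction $v$, Remark \ref{r2} gives that the $gH$-directional derivative exists and coincides with the Hadamard derivative, so that
\[
\textbf{F}_\mathscr{H}(\bar{x})(v)=\lim_{\lambda\to 0+}\frac{1}{\lambda}\odot\left(\textbf{F}(\bar{x}+\lambda v)\ominus_{gH}\textbf{F}(\bar{x})\right).
\]
Expanding $\textbf{F}(\bar{x}+\lambda v)\ominus_{gH}\textbf{F}(\bar{x})$ through its endpoints and dividing by $\lambda>0$, this limit evaluates to $\textbf{F}_\mathscr{H}(\bar{x})(v)=[\min\{\underline{f}_\mathscr{D}(\bar{x})(v),\overline{f}_\mathscr{D}(\bar{x})(v)\},\ \max\{\underline{f}_\mathscr{D}(\bar{x})(v),\overline{f}_\mathscr{D}(\bar{x})(v)\}]$, where $\underline{f}_\mathscr{D}(\bar{x})(v)$ and $\overline{f}_\mathscr{D}(\bar{x})(v)$ are the directional derivatives of Definition \ref{ddd}.

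Next I would translate the hypothesis into sign conditions. By Definition \ref{interval_dominance}(ii), $\textbf{F}_\mathscr{H}(\bar{x})(v)\prec\textbf{0}$ is equivalent to requiring both $\underline{f}_\mathscr{D}(\bar{x})(v)\le 0$ and $\overline{f}_\mathscr{D}(\bar{x})(v)\le 0$ with at least one of the two strictly negative. For each endpoint function whose directional derivative is strictly negative, the limit in Definition \ref{ddd} supplies a threshold: there is $\delta_{\underline{f}}>0$ (respectively $\delta_{\overline{f}}>0$) on which the corresponding difference quotient stays negative, hence $\underline{f}(\bar{x}+\lambda v)<\underline{f}(\bar{x})$ (respectively $\overline{f}(\bar{x}+\lambda v)<\overline{f}(\bar{x})$) there. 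Taking $\delta$ to be the minimum of the relevant thresholds and combining the two endpoint inequalities then yields $\textbf{F}(\bar{x}+\lambda v)\prec\textbf{F}(\bar{x})$ for all $\lambda\in(0,\delta)$, by the endpoint form of strict domination.

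The main obstacle is the boundary case in which exactly one of the two directional derivatives equals zero---say $\overline{f}_\mathscr{D}(\bar{x})(v)=0$ while $\underline{f}_\mathscr{D}(\bar{x})(v)<0$. Then the scheme above controls only the lower endpoint, and the vanishing directional derivative gives no first-order sign information about $\overline{f}(\bar{x}+\lambda v)-\overline{f}(\bar{x})$, so the inequality $\overline{f}(\bar{x}+\lambda v)\le\overline{f}(\bar{x})$ needed to close the argument is not automatic. I expect this is precisely where care is required: one must either invoke additional structure at $\bar{x}$ (e.g.\ convexity, or the linearity of $\textbf{F}_\mathscr{H}(\bar{x})$ via Lemma \ref{nb}) to rule out a first-order increase of the borderline endpoint, or restrict attention to the regime genuinely used in the subsequent Fritz John/KKT development, namely the one in which $\textbf{F}_\mathscr{H}(\bar{x})(v)$ has both endpoints strictly negative. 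In that regime both difference quotients are eventually negative, and the conclusion $\textbf{F}(\bar{x}+\lambda v)\prec\textbf{F}(\bar{x})$---indeed $\textbf{F}(\bar{x}+\lambda v)<\textbf{F}(\bar{x})$ in the sense of Definition \ref{better}, by Lemma \ref{rr}---follows immediately.
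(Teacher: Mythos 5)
Your concern about the boundary case is not a technicality requiring extra care --- it is fatal: the lemma as stated is false, and no argument can close the gap you identified. Take $n=1$, $\bar{x}=0$, $v=1$, and
\[
\textbf{F}(x)=\left[-x,\ 1+x^{2}\right],
\]
which is a genuine IVF on $\mathbb{R}$ because $x^{2}+x+1>0$ for all $x$. Then
\[
\frac{1}{\lambda}\odot\left(\textbf{F}(\lambda h)\ominus_{gH}\textbf{F}(0)\right)
=\left[\min\{-h,\lambda h^{2}\},\ \max\{-h,\lambda h^{2}\}\right]
\longrightarrow \left[\min\{-v,0\},\ \max\{-v,0\}\right]=v\odot[-1,0]
\]
as $\lambda\to 0+$ and $h\to v$, and $v\mapsto v\odot[-1,0]$ is a linear IVF in the sense of Definition \ref{linear}. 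Hence $\textbf{F}$ is $gH$-Hadamard differentiable at $\bar{x}=0$ with $\textbf{F}_\mathscr{H}(0)(1)=[-1,0]\prec\textbf{0}$ by Definition \ref{interval_dominance}(ii). Nevertheless, for every $\lambda>0$,
\[
\textbf{F}(\lambda)=\left[-\lambda,\ 1+\lambda^{2}\right]\not\prec[0,1]=\textbf{F}(0),
\]
since the upper endpoint strictly increases. So under the stated hypothesis, which permits the upper endpoint of $\textbf{F}_\mathscr{H}(\bar{x})(v)$ to vanish, the conclusion can fail.

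The paper's own proof commits precisely the error you anticipated: its first step asserts that $\textbf{F}_\mathscr{H}(\bar{x})(v)\prec\textbf{0}$ forces $\frac{1}{\lambda}\odot\left(\textbf{F}(\bar{x}+\lambda h)\ominus_{gH}\textbf{F}(\bar{x})\right)\prec\textbf{0}$ for all small $\lambda$ and all $h$ near $v$. But strict dominance is not an open condition ($[-1,\tfrac{1}{k}]\to[-1,0]$ while $[-1,\tfrac{1}{k}]\not\prec\textbf{0}$ for every $k$), and in the example above every quotient has upper endpoint $\lambda h^{2}>0$. That implication is valid only when both endpoints of the derivative are strictly negative, i.e.\ $\textbf{F}_\mathscr{H}(\bar{x})(v)<\textbf{0}$ in the sense of Definition \ref{better}, which by Lemma \ref{rr} means exactly that both endpoints are negative; in that regime your endpoint argument and the paper's interval-quotient argument are the same proof, and both are sound. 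So where your proposal is complete it matches the paper, and where you flagged a gap the paper is simply wrong; the correct fix is to strengthen the hypothesis (here, and in the descent cone of Definition \ref{dK2} that feeds Theorems \ref{theorem13}, \ref{theorem16} and \ref{theorem17}) from $\prec\textbf{0}$ to $<\textbf{0}$. One small repair to your main case: you should not assume that $\underline{f}_\mathscr{D}(\bar{x})(v)$ and $\overline{f}_\mathscr{D}(\bar{x})(v)$ exist, since the min/max in the $gH$-quotient may swap between the two endpoint increments and their separate convergence is not automatic. It suffices that the upper endpoint of the quotient, namely $\frac{1}{\lambda}\max\{\underline{f}(\bar{x}+\lambda v)-\underline{f}(\bar{x}),\ \overline{f}(\bar{x}+\lambda v)-\overline{f}(\bar{x})\}$, converges to a strictly negative limit; then both increments are eventually negative, and Lemma \ref{rr} gives $\textbf{F}(\bar{x}+\lambda v)<\textbf{F}(\bar{x})$, hence $\textbf{F}(\bar{x}+\lambda v)\prec\textbf{F}(\bar{x})$.
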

				
				\begin{proof}
					Since $\textbf{F}_\mathscr{H}(\bar{x})(v) \prec \textbf{0}$, there exist $\delta, \delta' >0$ such that for all $h \in \mathbb{R}^n$, we have
					\[\frac{1}{\lambda}\odot(\textbf{F}(\bar{x}+\lambda h)\ominus_{gH}\textbf{F}(\bar{x})) \prec \textbf{0},~\lambda \in (0, \delta)~\text{and}~\lVert v-h \rVert <\delta'.\]
					Due to $gH$-continuity of $\textbf{F}$ at $v$, we get
					\begin{eqnarray*}
						&&\textbf{F}(\bar{x}+\lambda v)\ominus_{gH}\textbf{F}(\bar{x}) \prec \textbf{0},~\forall~\lambda \in (0, \delta),
					\end{eqnarray*}
					which implies $\textbf{F}(\bar{x}+\lambda v)\prec\textbf{F}(\bar{x}),~\forall~\lambda \in (0, \delta).$
				\end{proof}
				
				\begin{definition}\label{dK2} Let $\textbf{F}:\mathbb{R}^n \to I(\mathbb{R})$ be a $gH$-Hadamard differentiable IVF at $\bar{x}$. Then, the set of descent directions at $\bar{x}$ is defined by
					\[\hat{\textbf{F}}(\bar{x})= \{d \in \mathbb{R}^n: \textbf{F}_\mathscr{H}(\bar{x})(d)\prec \textbf{0} \}.\]
					As for any  $d$ in $\hat{\textbf{F}}(\bar{x})$, $\lambda d \in \hat{\textbf{F}}(\bar{x})$ for all $\lambda >0$, the set $\hat{\textbf{F}}(\bar{x})$ is called the cone of descent direction.
					
				\end{definition}
				
				\begin{definition}\label{dK3} \cite{Ghosh2020} Given a nonempty set $\mathcal{S}\subseteq \mathbb{R}^n$ and $\bar{x} \in \mathcal{S}$. At $\bar{x}$, the cone of feasible directions of $\mathcal{S}$ is defined by
					\[\hat{\mathcal{S}}(\bar{x})= \{d \in \mathbb{R}^n: d \neq 0,~ \bar{x}+\lambda d \in \mathcal{S}, ~\forall~ \lambda \in (0, \delta)~\text{and for some}~\delta >0\}.\]
					
				\end{definition}
				
				\begin{lemma}\label{lk4}
					Let $\mathcal{S}\subseteq \mathbb{R}^n$ 
					and $\textbf{F}:\mathbb{R}^n \to I(\mathbb{R})$ be a $gH$-Hadamard differentiable IVF at $\bar{x} \in \mathcal{S}$. If $\bar{x}$ is an efficient solution of the IOP (\ref{IOP}), then $\hat{\textbf{F}}(\bar{x}) \cap \hat{\mathcal{S}}(\bar{x})= \emptyset.$
				\end{lemma}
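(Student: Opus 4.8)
The plan is to argue by contradiction, leaning entirely on Lemma \ref{K1}, which converts the infinitesimal descent condition $\textbf{F}_\mathscr{H}(\bar{x})(v)\prec\textbf{0}$ into an honest strict improvement $\textbf{F}(\bar{x}+\lambda v)\prec\textbf{F}(\bar{x})$ for all small enough $\lambda$. So I would begin by assuming $\hat{\textbf{F}}(\bar{x}) \cap \hat{\mathcal{S}}(\bar{x}) \neq \emptyset$ and picking a direction $d$ lying in both cones, then unpack what each membership yields.

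First I would use $d \in \hat{\textbf{F}}(\bar{x})$, which by Definition \ref{dK2} means $\textbf{F}_\mathscr{H}(\bar{x})(d)\prec\textbf{0}$. Applying Lemma \ref{K1} with $v=d$ then produces a threshold $\delta_1>0$ such that $\textbf{F}(\bar{x}+\lambda d)\prec\textbf{F}(\bar{x})$ for every $\lambda\in(0,\delta_1)$. Next I would use $d \in \hat{\mathcal{S}}(\bar{x})$, which by Definition \ref{dK3} furnishes a (possibly different) threshold $\delta_2>0$ such that $\bar{x}+\lambda d\in\mathcal{S}$ for every $\lambda\in(0,\delta_2)$.

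The decisive step is to reconcile these two ranges of admissible step sizes: set $\delta=\min\{\delta_1,\delta_2\}$ and fix any $\lambda\in(0,\delta)$. Writing $x_\lambda=\bar{x}+\lambda d$, this point is simultaneously feasible, $x_\lambda\in\mathcal{S}$, and strictly better than $\bar{x}$, namely $\textbf{F}(x_\lambda)\prec\textbf{F}(\bar{x})$. This directly contradicts the efficiency of $\bar{x}$, since by Definition \ref{efficient_point_def} an efficient point of the IOP (\ref{IOP}) must satisfy $\textbf{F}(x)\nprec\textbf{F}(\bar{x})$ for all $x\in\mathcal{S}$. Therefore no such common direction $d$ can exist, and I conclude $\hat{\textbf{F}}(\bar{x}) \cap \hat{\mathcal{S}}(\bar{x})= \emptyset$.

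I do not anticipate any real obstacle here: once Lemma \ref{K1} is available, the proof is a two-line contradiction. The only point requiring mild care is recording the two thresholds $\delta_1$ and $\delta_2$ separately and intersecting their half-open intervals of step sizes before extracting the contradiction, so that the descent inequality and the feasibility condition hold for the \emph{same} value of $\lambda$.
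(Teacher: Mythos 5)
Your proof is correct and follows essentially the same route as the paper: assume a common direction $d$ exists, use Lemma \ref{K1} together with Definition \ref{dK3} to obtain the two thresholds, take $\delta=\min\{\delta_1,\delta_2\}$, and derive a feasible point strictly dominating $\textbf{F}(\bar{x})$, contradicting efficiency. The only cosmetic difference is that you invoke Definition \ref{efficient_point_def} for (global) efficiency, whereas the paper phrases the contradiction in terms of a local efficient point; the argument is identical either way.
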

				
				\begin{proof}
					Assume contrary that $\hat{\textbf{F}}(\bar{x}) \cap \hat{\mathcal{S}}(\bar{x}) \neq \emptyset$ and $d \in \hat{\textbf{F}}(\bar{x}) \cap \hat{\mathcal{S}}(\bar{x})$. By Lemma \ref{K1} and Definition \ref{dK3}, there exist $\delta_1 , \delta_2>0$ such that
					\[\bar{x}+\lambda d \in \mathcal{S}~\text{for all}~\lambda ~\text{in}~ (0, \delta_1) \quad \text{and} \quad \textbf{F}(\bar{x}+\lambda d) \prec \textbf{F}(\bar{x})~\text{for all}~\lambda ~\text{in}~(0, \delta_2).\]
					Taking $\delta = \min\{\delta_1, \delta_2\}$, we see that for all $\lambda \in (0, \delta)$,
					\[\bar{x}+\lambda d \in \mathcal{S}~\text{and}~\textbf{F}(\bar{x}+\lambda d) \prec \textbf{F}(\bar{x}).\]
					This is contradictory to $\bar{x}$ being a local efficient point. Hence, $\hat{\textbf{F}}(\bar{x}) \cap \hat{\mathcal{S}}(\bar{x})= \emptyset.$
				\end{proof}

				\begin{lemma}{\label{prop1}}
					For $i=1,2,\ldots,m$, let $\textbf{G}_{i}: \mathbb{R}^n \rightarrow I(\mathbb{R})$ be IVF, $X$ be a non-empty open set in $\mathbb{R}^n$, and $\mathcal{S}$ = $\{x \in X: \textbf{G}_{i}(x) \preceq  \mathbf{0} \text{ for }i= 1,2,\dotsc,m \}$. Let $\bar{x} \in \mathcal{S}$ and $\textit{I}(\bar{x}) = \{i: \textbf{G}_{i}(\bar{x}) = \mathbf{0} \}$. For all $i \in I(\bar{x})$, assume that $\textbf{G}_{i}$ is $gH$-Hadamard differentiable at $\bar{x}$ and $gH$-continuous for $i \notin I(\bar{x})$, define
					\[ \hat{G}(\bar{x}) = \{d : {\textbf{G}_i}_\mathscr{H}(\bar{x})(d)_(\bar{x})  \prec \mathbf{0} \ \text{for  all } i \in I(x_0) \} .\]
					Then,
					$ \hat{G}(\bar{x}) \subseteq \hat{\mathcal{S}}(\bar{x}),$
					where $\hat{\mathcal{S}}(\bar{x}) = \{d \in \mathbb{R}^n :d \neq 0,~ \bar{x} + \alpha d \in S ~ \forall \alpha~ \in (0,\delta)  \text{ for some } \delta >0 \}$.
				\end{lemma}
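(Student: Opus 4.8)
The plan is to take an arbitrary $d \in \hat{G}(\bar{x})$ and produce a single $\delta > 0$ witnessing $d \in \hat{\mathcal{S}}(\bar{x})$, that is, with $\bar{x} + \alpha d \in \mathcal{S}$ for every $\alpha \in (0,\delta)$. First I would record that $d \neq 0$: each ${\textbf{G}_i}_\mathscr{H}(\bar{x})$ is a linear IVF, so it sends $0$ to $\mathbf{0}$ (Definition \ref{linear}(i) with $\lambda = 0$ gives ${\textbf{G}_i}_\mathscr{H}(\bar{x})(0) = 0 \odot {\textbf{G}_i}_\mathscr{H}(\bar{x})(x) = \mathbf{0}$), and since $\mathbf{0} \nprec \mathbf{0}$, the choice $d = 0$ cannot satisfy the defining inequality of $\hat{G}(\bar{x})$. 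Next, because $X$ is open and $\bar{x} \in X$, there is $\delta_0 > 0$ with $\bar{x} + \alpha d \in X$ for all $\alpha \in (0,\delta_0)$. It then remains to guarantee $\textbf{G}_i(\bar{x} + \alpha d) \preceq \mathbf{0}$ for small $\alpha$ and every $i$, which I would split according to whether $i$ is active.

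For the active indices $i \in I(\bar{x})$ we have $\textbf{G}_i(\bar{x}) = \mathbf{0}$ and, by hypothesis, ${\textbf{G}_i}_\mathscr{H}(\bar{x})(d) \prec \mathbf{0}$. This is precisely the setting of Lemma \ref{K1} applied to $\textbf{G}_i$ in the direction $d$, which yields a $\delta_i > 0$ such that $\textbf{G}_i(\bar{x} + \alpha d) \prec \textbf{G}_i(\bar{x}) = \mathbf{0}$ for every $\alpha \in (0,\delta_i)$. Since strict dominance implies dominance (the remark following Definition \ref{interval_dominance}), this gives $\textbf{G}_i(\bar{x} + \alpha d) \preceq \mathbf{0}$ on $(0,\delta_i)$.

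For the inactive indices $i \notin I(\bar{x})$ the feasibility of $\bar{x}$ gives $\textbf{G}_i(\bar{x}) \preceq \mathbf{0}$ while $\textbf{G}_i(\bar{x}) \neq \mathbf{0}$, so both endpoints are $\le 0$ and not both zero, i.e. $\textbf{G}_i(\bar{x}) \prec \mathbf{0}$. Here I would invoke $gH$-continuity of $\textbf{G}_i$ at $\bar{x}$: writing $\textbf{G}_i = [\underline{g}_i, \overline{g}_i]$, the endpoint values $\underline{g}_i(\bar{x} + \alpha d)$ and $\overline{g}_i(\bar{x} + \alpha d)$ tend to $\underline{g}_i(\bar{x})$ and $\overline{g}_i(\bar{x})$ as $\alpha \to 0^{+}$, so the strictly negative endpoint stays negative on some interval $(0,\delta_i)$ and the value remains $\preceq \mathbf{0}$. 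Taking $\delta = \min\{\delta_0,\ \delta_i : i = 1,\dots,m\}$ over the finitely many indices, we obtain $\bar{x} + \alpha d \in X$ and $\textbf{G}_i(\bar{x} + \alpha d) \preceq \mathbf{0}$ for all $i$ and all $\alpha \in (0,\delta)$, hence $\bar{x} + \alpha d \in \mathcal{S}$; together with $d \neq 0$ this is exactly $d \in \hat{\mathcal{S}}(\bar{x})$.

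I expect the only genuinely delicate step to be the inactive case when some endpoint of $\textbf{G}_i(\bar{x})$ equals zero while the other is strictly negative, so that $\textbf{G}_i(\bar{x}) \prec \mathbf{0}$ holds but both endpoints strictly negative (i.e. $\textbf{G}_i(\bar{x}) < \mathbf{0}$ in the sense of Lemma \ref{rr}) fails. There $gH$-continuity keeps the strictly negative endpoint below zero but does not by itself prevent the zero endpoint from drifting positive; securing $\textbf{G}_i(\bar{x} + \alpha d) \preceq \mathbf{0}$ then needs a more careful argument, or the strengthened inactive hypothesis $\textbf{G}_i(\bar{x}) < \mathbf{0}$. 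Every other step is routine once Lemma \ref{K1} and the openness of $X$ are in hand.
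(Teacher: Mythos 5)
Your strategy is the intended one: the paper itself gives no argument for this lemma (its ``proof'' merely defers to Lemma 3.1 of \cite{Ghosh2020}), and the argument being deferred to is exactly your decomposition --- $d \neq 0$ from linearity of the $gH$-Hadamard derivative, a $\delta_0$ from openness of $X$, Lemma \ref{K1} for the active indices, and $gH$-continuity for the inactive ones. Your treatment of the active indices (Lemma \ref{K1} gives $\textbf{G}_i(\bar{x}+\alpha d) \prec \textbf{G}_i(\bar{x}) = \mathbf{0}$, and strict dominance implies dominance) is correct, as is your observation that $gH$-continuity of an IVF is equivalent to continuity of both endpoint functions.

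The gap you flag in the inactive case is genuine, and it is worse than you suggest: no ``more careful argument'' can close it, because with the hypotheses as stated the lemma is false. Take $n = m = 1$, $X = \mathbb{R}$, $\textbf{G}_1(x) = [x-1,\, x]$, $\bar{x} = 0$. Then $\textbf{G}_1(0) = [-1,0] \preceq \mathbf{0}$ and $\textbf{G}_1(0) \neq \mathbf{0}$, so $\bar{x} \in \mathcal{S} = (-\infty, 0]$, $I(\bar{x}) = \emptyset$, and $\hat{G}(\bar{x})$ is all of $\mathbb{R}$ vacuously; yet $d = 1 \notin \hat{\mathcal{S}}(\bar{x})$, since $\overline{g}_1(\alpha) = \alpha > 0$ for every $\alpha > 0$. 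The failure is not an artifact of the empty active set: in $\mathbb{R}^2$ with $\textbf{G}_1(x_1,x_2) = [x_1 - 1,\, x_1]$ and $\textbf{G}_2(x_1,x_2) = [x_2, x_2]$, at $\bar{x} = (0,0)$ one has $I(\bar{x}) = \{2\}$ and $d = (1,-1) \in \hat{G}(\bar{x})$, but $\bar{x} + \alpha d \notin \mathcal{S}$ for all $\alpha > 0$. The repair is exactly the strengthened hypothesis you name: require $\textbf{G}_i(\bar{x}) < \mathbf{0}$ for $i \notin I(\bar{x})$ (both endpoints strictly negative, by Lemma \ref{rr}), or equivalently redefine the active set as $\{i : \overline{g}_i(\bar{x}) = 0\}$; with that definition your active-index argument still goes through, since Lemma \ref{K1} yields $\textbf{G}_i(\bar{x}+\alpha d) \prec \textbf{G}_i(\bar{x}) \preceq \mathbf{0}$, and the inactive indices then genuinely satisfy $\textbf{G}_i(\bar{x}) < \mathbf{0}$, so your continuity argument suffices. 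This defect is inherited from Lemma 3.1 of \cite{Ghosh2020}; it is telling that reference \cite{Chauhan2021} in this paper's own bibliography is an erratum to that very paper.
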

				
				\begin{proof}
					It is similar to proof of Lemma 3.1 in \cite{Ghosh2020} for $gH$-Hadamard derivative, and therefore, we omit.
				\end{proof}

				With the help of Lemma \ref{prop1}, we characterize an efficient solution of a constrained IOP. It is shown that at a local efficient solution, the cones of descent direction and feasible direction have an empty intersection.

				\begin{theorem}{\label{theorem13}}
					Let $\mathcal{S}$ be a non-empty open set in $\mathbb{R}^n$. Consider an IOP
					\begin{equation}\label{const_IOP}
						\left.
						\begin{aligned}
							&\min ~ {\textbf F}(x) \\
							&~\textup{such that }~ \textbf{G}_{i}(x) \preceq \mathbf{0},~\textup{ for $i = 1,2,\dotsc,m$} \\
							&~ x \in \mathcal{S},
						\end{aligned}
						\right\}
					\end{equation}
					where ${\textbf F} : \mathbb{R}^n \rightarrow I(\mathbb{R})$ and $\textbf{G}_{i}: \mathbb{R}^n \rightarrow I(\mathbb{R})$ for $i=1,2,\dotsc,m$.
					For a feasible point $x_0$, define $ \textit{I}(x_0) = \{i:   \textbf{G}_{i}(\bar{x})=0 \}$. At $\bar{x}$, let ${\textbf F}$ and  $\textbf{G}_{i}$, $i \in I(\bar{x})$, be $gH$-Hadamard differentiable, and for $i \notin I(\bar{x})$, $\textbf{G}_{i}$ be $gH$-continuous.
					If $\bar{x}$ is a local efficient solution of (\ref{const_IOP}), then
					\[\hat{F}(\bar{x}) \cap \hat{G}(\bar{x}) = \emptyset,\] where $ \hat{F}(\bar{x}) = \{d : \textbf{F}_\mathscr{H}(\bar{x})(d)_(\bar{x})  \prec \mathbf{0} \} \ \textup{ and } \hat{G}(\bar{x}) = \{d : {\textbf{G}_i}_\mathscr{H}(\bar{x})(d)_(\bar{x})  \prec \mathbf{0} \text{ for each } i \in I(\bar{x}) \}$.
				\end{theorem}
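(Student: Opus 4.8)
The plan is to argue by contradiction, reusing the mechanism of Lemma~\ref{lk4} but replacing the cone of feasible directions of the abstract set by the active-constraint descent cone $\hat{G}(\bar{x})$ through Lemma~\ref{prop1}. Write $\mathcal{M}=\{x\in\mathcal{S}:\textbf{G}_i(x)\preceq\textbf{0},\ i=1,\dots,m\}$ for the feasible region of the constrained IOP (\ref{const_IOP}), and suppose toward a contradiction that $\hat{F}(\bar{x})\cap\hat{G}(\bar{x})\neq\emptyset$. Fix a direction $d$ in this intersection; I would then show that $d$ is simultaneously a feasible direction and a genuine descent direction for the objective near $\bar{x}$, which is incompatible with local efficiency.

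First I would exploit membership in $\hat{G}(\bar{x})$. Since $d\in\hat{G}(\bar{x})$, Lemma~\ref{prop1} yields $d\in\hat{\mathcal{M}}(\bar{x})$, the cone of feasible directions of $\mathcal{M}$ at $\bar{x}$ (here the open set $\mathcal{S}$ of Theorem~\ref{theorem13} plays the role of the open set in Lemma~\ref{prop1}). By Definition~\ref{dK3}, this gives some $\delta_1>0$ with $\bar{x}+\lambda d\in\mathcal{M}$ for all $\lambda\in(0,\delta_1)$, so the segment stays feasible for small positive $\lambda$. Next I would use membership in $\hat{F}(\bar{x})$: this means $\textbf{F}_\mathscr{H}(\bar{x})(d)\prec\textbf{0}$, and Lemma~\ref{K1} then supplies $\delta_2>0$ such that $\textbf{F}(\bar{x}+\lambda d)\prec\textbf{F}(\bar{x})$ for all $\lambda\in(0,\delta_2)$.

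Taking $\delta=\min\{\delta_1,\delta_2\}$, for every $\lambda\in(0,\delta)$ the point $\bar{x}+\lambda d$ is feasible and satisfies $\textbf{F}(\bar{x}+\lambda d)\prec\textbf{F}(\bar{x})$. Letting $\lambda\to0+$ produces feasible points arbitrarily close to $\bar{x}$ whose objective strictly dominates $\textbf{F}(\bar{x})$, contradicting the local efficiency of $\bar{x}$ (Definition~\ref{efficient_point_def} read on a neighbourhood). Hence no such $d$ can exist and $\hat{F}(\bar{x})\cap\hat{G}(\bar{x})=\emptyset$.

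Essentially all of the analytic content is already packaged in the two cited lemmas, so the argument above is short and forward: Lemma~\ref{K1} converts the $gH$-Hadamard descent condition on the objective into actual strict domination of the interval function values along the segment, while Lemma~\ref{prop1} converts the active-constraint descent condition into feasibility of that same segment. The one delicate point, which is exactly where the $gH$-continuity hypothesis on the inactive constraints ($i\notin I(\bar{x})$, for which $\textbf{G}_i(\bar{x})\prec\textbf{0}$) enters, is that those constraints remain satisfied along $\bar{x}+\lambda d$ for small $\lambda$; but this is precisely what Lemma~\ref{prop1} has already absorbed. The main obstacle in writing the proof is therefore not the contradiction itself but ensuring the two radii $\delta_1,\delta_2$ can be made simultaneously small, so that feasibility and strict descent both hold on a common interval $(0,\delta)$—which is immediate once both lemmas are in hand.
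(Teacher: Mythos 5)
Your proof is correct and takes essentially the same route as the paper: the paper's own proof simply chains Lemma \ref{lk4} (whose proof is exactly your inlined argument combining Lemma \ref{K1} with Definition \ref{dK3} and the minimum of the two radii) with the inclusion $\hat{G}(\bar{x}) \subseteq \hat{\mathcal{S}}(\bar{x})$ from Lemma \ref{prop1}. The only difference is presentational: you unfold Lemma \ref{lk4} into an explicit contradiction argument instead of citing it.
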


				\begin{proof}
					By Lemma \ref{lk4} and Lemma \ref{prop1}, we obtain
					\[ \text{$x_0$ is a local efficient solution } \implies \hat{F}(\bar{x}) \cap \hat{\mathcal{S}}(\bar{x}) = \emptyset \implies \hat{F}(\bar{x}) \cap \hat{G}(\bar{x}) = \emptyset.\]
				\end{proof}

				
				

				\begin{theorem}{\label{theorem16}}
					\textup{(Extended Fritz John necessary optimality condition).}
					Let $\mathcal{S}$ be a non-empty open set in $\mathbb{R}^n$; ${\textbf F} : \mathbb{R}^n \rightarrow I(\mathbb{R})$ and $\textbf{G}_{i}: \mathbb{R}^n \rightarrow I(\mathbb{R})$ for $i = 1,2,\dotsc,m$ be IVFs. Consider the IOP:
					\begin{equation}\label{CIOP_DG}
						\left.
						\begin{aligned}
							&~\min~ {\textbf F}(x),\\
							&~\textup{such that} ~ \textbf{G}_{i}(x) \preceq  \mathbf{0}, \quad i = 1,2,\dotsc,m \\
							~&~ x \in \mathcal{S}.
						\end{aligned}
						\right\}
					\end{equation}
					For a feasible point $\bar{x}$, define $\textit{I}(\bar{x}) = \{i:  {\textbf{G}_{i}}(\bar{x}) = \textbf{0} \}$. Let ${\textbf F}$ and $\textbf{G}_{i}$  be $gH$-Hadamard differentiable at $\bar{x}$ for $i \in I(\bar{x})$ and $gH$-continuous for $i \notin I(\bar{x})$.
					If $\bar{x}$ is a local efficient point of (\ref{CIOP_DG}), then there exist constants $u_0 \ and \ u_i$ for $i \in I(\bar{x})$ such that
					\begin{equation*}
						\left\{
						\begin{aligned}
							& 0 \in \left (u_0 \odot \textbf{F}_\mathscr{H}(\bar{x})(d) \oplus \sum_{i \in I(\bar{x})} u_i \odot {\textbf{G}_i}_\mathscr{H}(\bar{x})(d)\right), \\
							& u_0 \ge 0, u_i \geq 0 \text{ for } i \in I(\bar{x}), \\
							& \left( u_0, {u_I}  \right) \neq \left(0,  {0}_v^{|I(\bar{x})|} \right),
						\end{aligned}
						\right.
					\end{equation*}
					where ${u_I}$ is the vector whose components are $u_i$ for $i \in I(\bar{x})$. \\
					Further, if $\textbf{G}_{i}$, for all $i \notin I(\bar{x})$, are also $gH$-Hadamard differentiable at $\bar{x}$, then there exist constants $u_0, u_1, u_2, \ldots, u_m$ such that
					\begin{equation*}
						\left\{
						\begin{aligned}
							& 0 \in \left (u_0 \odot \textbf{F}_\mathscr{H}(\bar{x})(d) \oplus \sum_{i=1}^{m} u_i\odot {\textbf{G}_i}_\mathscr{H}(\bar{x})(d)\right), \\
							& u_i \odot {\textbf{G}_{i}(\bar{x})} = \textbf{0},~ i = 1, 2, \dotsc, m, \\
							& u_0 \geq 0, u_i \geq 0,~  i = 1,2,\dotsc,m, \\
							& \left( u_0, {u}  \right) \neq \left(0,  {0}_v^m \right),
						\end{aligned}
						\right.
					\end{equation*}
					where ${u}$ is the vector $(u_1, u_2, \ldots, u_m)$.
				\end{theorem}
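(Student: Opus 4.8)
The plan is to derive the multiplier rule from the purely geometric optimality condition already established in Theorem~\ref{theorem13}, by passing to the endpoint functionals of the $gH$-Hadamard derivatives and invoking a Gordan-type theorem of the alternative. Since $\bar{x}$ is a local efficient point of~(\ref{CIOP_DG}), Theorem~\ref{theorem13} gives $\hat{F}(\bar{x}) \cap \hat{G}(\bar{x}) = \emptyset$; that is, there is no direction $d$ with $\textbf{F}_\mathscr{H}(\bar{x})(d) \prec \textbf{0}$ and ${\textbf{G}_i}_\mathscr{H}(\bar{x})(d) \prec \textbf{0}$ holding simultaneously for every $i \in I(\bar{x})$. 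This single geometric fact is the input I would feed into the alternative argument.

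Next I would express this inconsistency as a real linear system. Writing each linear $gH$-Hadamard derivative through its lower and upper endpoint functionals, say $\textbf{F}_\mathscr{H}(\bar{x})(d) = [\underline{F}(d), \overline{F}(d)]$ and ${\textbf{G}_i}_\mathscr{H}(\bar{x})(d) = [\underline{G_i}(d), \overline{G_i}(d)]$, I observe that any $d$ making all of these endpoint functionals strictly negative would necessarily lie in $\hat{F}(\bar{x}) \cap \hat{G}(\bar{x})$, since two strictly negative endpoints certainly yield strict domination by $\textbf{0}$. Hence the emptiness forces the strict system---all endpoint functionals of $\textbf{F}$ and of the active ${\textbf{G}_i}$ negative---to have no solution $d$. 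Applying Gordan's theorem of the alternative to this inconsistent strict system produces nonnegative scalars, not all zero, whose combination of the endpoint functionals vanishes identically in $d$.

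The remaining step is bookkeeping: I would collect the endpoint scalars into single interval multipliers $u_0 \ge 0$ and $u_i \ge 0$, $i \in I(\bar{x})$, so that the vanishing linear combination becomes exactly the membership $0 \in \left(u_0 \odot \textbf{F}_\mathscr{H}(\bar{x})(d) \oplus \sum_{i \in I(\bar{x})} u_i \odot {\textbf{G}_i}_\mathscr{H}(\bar{x})(d)\right)$, using that for $u \ge 0$ the scaling is $u \odot [\underline{a}, \overline{a}] = [u\underline{a}, u\overline{a}]$ and that $\oplus$ adds endpoints. Non-triviality of the Gordan multipliers yields $\left(u_0, u_I\right) \neq \left(0, {0}_v^{|I(\bar{x})|}\right)$. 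For the full statement I would simply set $u_i = 0$ for every inactive index $i \notin I(\bar{x})$; then the sum over $I(\bar{x})$ extends to the sum over all $i = 1, \ldots, m$, and complementary slackness $u_i \odot \textbf{G}_i(\bar{x}) = \textbf{0}$ holds automatically---for active $i$ because $\textbf{G}_i(\bar{x}) = \textbf{0}$, for inactive $i$ because $u_i = 0$---provided the $\textbf{G}_i$ are also $gH$-Hadamard differentiable there.

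The main obstacle I anticipate is the faithful passage between the interval relation $\prec \textbf{0}$ and a real strict-inequality system: strict domination by $\textbf{0}$ permits one endpoint to vanish, so I must check that replacing it by the stronger ``both endpoints negative'' system does not destroy the implication needed for Gordan, and, conversely, that the scalars returned by the alternative can be regrouped into genuine single multipliers $u_0, u_i$ compatible with $gH$-arithmetic, so that the contained-zero condition $0 \in (\cdots)$ is valid for all $d$. Verifying that the endpoint functionals of a linear $gH$-Hadamard derivative combine additively under these nonnegative coefficients, rather than interchanging their roles across directions, is the delicate point on which the argument rests.
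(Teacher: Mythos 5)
Your reduction of the problem to the inconsistency of the strict endpoint system is fine: if both endpoints of $\textbf{F}_\mathscr{H}(\bar{x})(d)$ and of every active ${\textbf{G}_i}_\mathscr{H}(\bar{x})(d)$ were negative, then each interval would satisfy $\prec\textbf{0}$, so $d$ would lie in $\hat{F}(\bar{x})\cap\hat{G}(\bar{x})$, contradicting Theorem \ref{theorem13}. But the step you yourself call ``the delicate point'' is where the argument actually breaks, and no repair is offered. Gordan's theorem of the alternative applies to a system of \emph{linear} functionals, and the endpoint functionals of a $gH$-Hadamard derivative are not linear. Linearity of an IVF in this paper (Definition \ref{linear}) gives $\textbf{F}_\mathscr{H}(\bar{x})(\lambda d)=\lambda\odot\textbf{F}_\mathscr{H}(\bar{x})(d)$, and for $\lambda<0$ the $gH$-scalar product swaps endpoints, so $\underline{F}(-d)=-\overline{F}(d)$ rather than $-\underline{F}(d)$. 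Concretely, for the typical derivative $\textbf{F}_\mathscr{H}(\bar{x})(d)=c^{\top}d\odot[\underline{a},\overline{a}]$ with $\underline{a}<\overline{a}$ (as in Example \ref{ee1}), the endpoints are $\min\{\underline{a}\,c^{\top}d,\;\overline{a}\,c^{\top}d\}$ and $\max\{\underline{a}\,c^{\top}d,\;\overline{a}\,c^{\top}d\}$: positively homogeneous and piecewise linear, not linear. Hence the classical Gordan theorem cannot be invoked, and its conclusion that some nonnegative combination of the endpoint functionals ``vanishes identically in $d$'' is precisely what fails. A convex theorem of the alternative (Fan--Glicksberg--Hoffman type) would instead require convexity of the relevant endpoint functionals, but condition (ii) of Definition \ref{linear} permits non-comparability in place of additivity, so even the upper endpoint of a linear IVF need not be convex: $\textbf{F}(d)=[c^{\top}d-g(d),\,c^{\top}d+g(d)]$ satisfies Definition \ref{linear} for \emph{any} nonnegative absolutely homogeneous $g$, including non-convex ones such as $g(d)=\min\{|d_1|,|d_2|\}$ on $\mathbb{R}^2$. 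Without a valid alternative theorem no multipliers are produced, and the proposal stops there.

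What your proposal misses is that the paper needs no separation or alternative theorem at all. From $\hat{F}(\bar{x})\cap\hat{G}(\bar{x})=\emptyset$ the paper passes to $\textbf{F}_\mathscr{H}(\bar{x})(d)\nprec\textbf{0}$ and ${\textbf{G}_i}_\mathscr{H}(\bar{x})(d)\nprec\textbf{0}$ for all $d$ and all $i\in I(\bar{x})$, and then applies Lemma \ref{nb}: a linear IVF that nowhere strictly dominates $\textbf{0}$ must satisfy $0\in\textbf{F}_\mathscr{H}(\bar{x})(d)$ for every $d$ (non-comparability with $\textbf{0}$, or equality with it, forces $\underline{F}(d)\le 0\le\overline{F}(d)$). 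Once $0$ lies in each of the intervals $\textbf{F}_\mathscr{H}(\bar{x})(d)$ and ${\textbf{G}_i}_\mathscr{H}(\bar{x})(d)$, \emph{any} nonnegative, not-all-zero choice of $u_0,u_i$ satisfies the Fritz John membership, since a nonnegative combination of intervals each containing $0$ again contains $0$; the second part follows by setting $u_i=0$ for inactive $i$, exactly as you do. So in the paper the multipliers are not extracted from the geometry by a Gordan-type argument --- the interval linearity of the derivative makes their choice trivial --- whereas your route attempts a sharper extraction that the $gH$-calculus developed in this paper does not support.
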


				\begin{proof}
					Since $\bar{x}$ is a local efficient point of (\ref{CIOP_DG}), by Theorem \ref{theorem13}, we get
					\begin{align}
						& \hat{F}(\bar{x}) \cap \hat{G}(\bar{x}) = \emptyset, \nonumber \\
						\text{or, } & \nexists \ d \in \mathbb{R}^n \text{ s.t. } \textbf{F}_\mathscr{H}(\bar{x})(d) \prec \mathbf{0} \text{ and } {\textbf{G}_i}_\mathscr{H}(\bar{x})(d)  \prec \mathbf{0} ~\forall ~i~ \in I(\bar{x}), \nonumber\\
						\text{or, }&\textbf{F}_\mathscr{H}(\bar{x})(d)  \nprec \mathbf{0} \text{ and } {\textbf{G}_i}_\mathscr{H}(\bar{x})(d)  \nprec \mathbf{0} ~\forall ~d \in \mathbb{R}^n~\text{and}~ i \in I(\bar{x}), \nonumber\\
						\text{or, }& 0 \in \textbf{F}_\mathscr{H}(\bar{x})(d) \text{ and } 0 \in {\textbf{G}_i}_\mathscr{H}(\bar{x})(d) ~\forall ~d \in \mathbb{R}^n~\text{and}~ i~ \in I(\bar{x})~\text{ by Lemma \ref{nb}}.
					\end{align}
					We can chose nonzero vector $ p $ with $p=[u_0, u_i]_{i \in I(\bar{x})}^\top$
					such that
					\begin{equation*}
						\left\{
						\begin{aligned}
							& 0 \in \left (u_0 \odot \textbf{F}_\mathscr{H}(\bar{x})(d) \oplus \sum_{i \in I(\bar{x})} u_i \odot {\textbf{G}_i}_\mathscr{H}(\bar{x})(d)\right), \\
							& u_0, u_i \geq 0 \text{ for } i \in I(x_0), \\
							& \left( u_0, {u_I}  \right) \neq \left(0, 0, \cdots, 0 \right).
						\end{aligned}
						\right.
					\end{equation*}
					This proves the first part of the theorem. \\
					For $i \in I(\bar{x})$, ${\textbf{G}_{i}}(\bar{x}) = \textbf{0}$. Therefore, $ u_i \odot \textbf{G}_{i}(\bar{x}) = \textbf{0}$. If $\textbf{G}_{i}$ for all $i \notin I(\bar{x})$ are also $gH$-differentiable at $\bar{x}$, by setting $u_i = 0$ for $ i \notin I(\bar{x})$ the second part of the theorem is followed.
				\end{proof}
				
				\begin{definition}\label{linearindep}\cite{Ghosh2020} The set of $m$ intervals $\{ \textbf{X}_1, \textbf{X}_2, \ldots, \textbf{X}_m \}$ is said to be linearly independent if for $m$ real numbers $c_1$, $c_2$, \ldots, $c_m$:  \[0 \in c_1 \odot \textbf{X}_1 \oplus c_2 \odot \textbf{X}_2 \oplus  \ldots \oplus c_m \odot \textbf{X}_m \quad \text{if and only if}\quad c_1 = 0, c_2 = 0, \dotsc , c_m = 0. \]
				\end{definition}

				\begin{theorem}{\label{theorem17}}
					\textup{(Extended Karush-Kuhn-Tucker necessary optimality condition).}
					Let $\mathcal{S}$ be a non-empty open set in $\mathbb{R}^n$ and  ${\textbf F} : \mathbb{R}^n \rightarrow I(\mathbb{R})$ and $\textbf{G}_{i}: \mathbb{R}^n \rightarrow I(\mathbb{R})$, $i=1,2,\dotsc,m$, be IVFs. Suppose that $\bar{x}$ is a feasible point of the following IOP:
					\begin{equation*}
						\left.
						\begin{aligned}
							&~\min ~ {\textbf F}(x) \\
							&~\textup{such that } ~ \textbf{G}_{i}(x) \preceq  \mathbf{0},\quad i = 1,2,\dotsc,m \\
							~&~ x \in \mathcal{S}.
						\end{aligned}
						\right\}
					\end{equation*}
					Define $\textit{I}(\bar{x}) = \{i: {\textbf{G}_{i}}(\bar{x}) = 0\}$. Let
					\begin{enumerate}[(i)]
						\item  ${\textbf F}$ and  $\textbf{G}_{i}$ be $gH$-Hadamard differentiable at $\bar{x}$ for all $i \in I(\bar{x})$,
						\item $\textbf{G}_{i}$ be $gH$-continuous for all $i \notin I(\bar{x})$, and
						\item the collection of intervals $\{ {\textbf{G}_i}_\mathscr{H}(\bar{x})(d) ~:~ i \in I(\bar{x}) \}$ be linearly independent.
					\end{enumerate}
					If $\bar{x}$ is a local efficient solution, then there exist constants $u_i\geq 0$ for all $i \in I(\bar{x})$ such that
					\begin{equation*}
						0 \in \left (u_0 \odot \textbf{F}_\mathscr{H}(\bar{x})(d) \oplus \sum_{i \in I(\bar{x})} u_i\odot {\textbf{G}_i}_\mathscr{H}(\bar{x})(d)\right)
					\end{equation*}
					If $\textbf{G}_{i}$'s, for $i \notin I(\bar{x})$, are also $gH$-differentiable at $\bar{x}$, then there exist constants $u_1$, $u_2$, \ldots, $u_m$ such that
					\begin{equation*}
						\left\{
						\begin{aligned}
							& 0 \in \left (u_0 \odot \textbf{F}_\mathscr{H}(\bar{x})(d) \oplus \sum_{i=1}^{m}  u_i\odot {\textbf{G}_i}_\mathscr{H}(\bar{x})(d)\right) ,\\
							& u_i \odot \textbf{G}_{i}(\bar{x}) = \textbf{0},~ i = 1,2,\dotsc, m,\\
							& u_i \geq 0,~ i = 1,2,\dotsc, m .
						\end{aligned}
						\right.
					\end{equation*}
				\end{theorem}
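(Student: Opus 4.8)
The strategy is to upgrade the Extended Fritz John necessary condition (Theorem~\ref{theorem16}) to a Karush--Kuhn--Tucker condition by exploiting the linear independence constraint qualification~(iii) to force the objective multiplier $u_0$ to be strictly positive. Since $\bar{x}$ is a local efficient solution and the differentiability and continuity hypotheses of Theorem~\ref{theorem16} are satisfied, that theorem supplies constants $u_0 \ge 0$ and $u_i \ge 0$ for $i \in I(\bar{x})$, not all simultaneously zero, such that
\[
0 \in \left( u_0 \odot \textbf{F}_\mathscr{H}(\bar{x})(d) \oplus \sum_{i \in I(\bar{x})} u_i \odot {\textbf{G}_i}_\mathscr{H}(\bar{x})(d) \right).
\]
Once I establish $u_0 > 0$, the first assertion of the theorem follows immediately (after normalizing $u_0 = 1$ if one prefers).

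The heart of the argument is the claim that $u_0 \neq 0$, and I would prove it by contradiction. Suppose $u_0 = 0$. Then the Fritz John inclusion collapses to
\[
0 \in \sum_{i \in I(\bar{x})} u_i \odot {\textbf{G}_i}_\mathscr{H}(\bar{x})(d).
\]
Because the Fritz John multipliers are not all zero and $u_0 = 0$, the reduced multiplier vector is nonzero, so at least one $u_i$ with $i \in I(\bar{x})$ is strictly positive. However, hypothesis~(iii) asserts that the family $\{ {\textbf{G}_i}_\mathscr{H}(\bar{x})(d) : i \in I(\bar{x}) \}$ is linearly independent in the sense of Definition~\ref{linearindep}; hence the inclusion $0 \in \sum_{i \in I(\bar{x})} u_i \odot {\textbf{G}_i}_\mathscr{H}(\bar{x})(d)$ forces every $u_i$ to vanish, contradicting the positivity of some $u_i$. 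Therefore $u_0 > 0$, which is exactly what promotes the Fritz John multipliers to genuine KKT multipliers.

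For the second assertion, assuming in addition that each $\textbf{G}_i$ with $i \notin I(\bar{x})$ is $gH$-Hadamard differentiable at $\bar{x}$, I would extend the multiplier family by setting $u_i = 0$ for every $i \notin I(\bar{x})$. Since the newly introduced terms carry zero coefficients, the sum over the active set $I(\bar{x})$ coincides with the sum over all indices, so
\[
0 \in \left( u_0 \odot \textbf{F}_\mathscr{H}(\bar{x})(d) \oplus \sum_{i=1}^{m} u_i \odot {\textbf{G}_i}_\mathscr{H}(\bar{x})(d) \right).
\]
The complementary slackness relations $u_i \odot \textbf{G}_i(\bar{x}) = \textbf{0}$ then hold for each $i$: for $i \in I(\bar{x})$ one has $\textbf{G}_i(\bar{x}) = \textbf{0}$ by the definition of the active set, while for $i \notin I(\bar{x})$ one has $u_i = 0$. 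Combined with $u_i \ge 0$, this yields the full KKT system.

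The main obstacle is precisely the step $u_0 \neq 0$, which is where the constraint qualification is indispensable; the delicate point is to confirm that the interval inclusion $0 \in \sum_{i \in I(\bar{x})} u_i \odot {\textbf{G}_i}_\mathscr{H}(\bar{x})(d)$ genuinely activates the linear independence conclusion of Definition~\ref{linearindep} within the $gH$-arithmetic, and that the direction $d$ at which linear independence is assumed is the same direction carried through from the Fritz John inclusion. Everything else is bookkeeping once this positivity of $u_0$ is secured.
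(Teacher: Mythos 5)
Your proposal is correct and follows essentially the same route as the paper: invoke the Extended Fritz John condition (Theorem~\ref{theorem16}), use the linear independence hypothesis (Definition~\ref{linearindep}) to rule out $u_0=0$ (the paper states this in one sentence; you spell out the contradiction explicitly), and finish the second assertion by setting $u_i=0$ for inactive constraints and using $\textbf{G}_i(\bar{x})=\textbf{0}$ on the active set. The only cosmetic difference is that the paper normalizes the multipliers by dividing through by $u_0$, whereas you leave $u_0>0$ unnormalized, which is equally consistent with the stated conclusion.
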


				\begin{proof}
					By Theorem \ref{theorem16}, there exist real constants $u_0$ and $u'_i$ for all $i \in I(\bar{x})$, not all zeros, such that
					\begin{equation}
						\left\{
						\begin{aligned}
							& 0 \in \left (u_0 \odot \textbf{F}_\mathscr{H}(\bar{x})(d) \oplus \sum_{i \in I(\bar{x})} u'_i\odot {\textbf{G}_i}_\mathscr{H}(\bar{x})(d)\right), \\
							& u_0 \ge 0, u_i' \geq 0 \text{ for all }  i \in I(\bar{x}).
						\end{aligned}
						\right.
					\end{equation}
					Then, we must have $u_0 > 0$. Since otherwise, the set  $\{ {\textbf{G}_i}_\mathscr{H}(\bar{x})(d) ~:~ i \in I(\bar{x})\}$ will become linearly dependent. \\ \\
					Define $u_i = u_i' / u_0$. Then, $u_i \ge 0$ for all $i \in I(\bar{x})$ and
					\[0 \in \left (u_0 \odot \textbf{F}_\mathscr{H}(\bar{x})(d) \oplus \sum_{i \in I(\bar{x})} u_i\odot {\textbf{G}_i}_\mathscr{H}(\bar{x})(d)\right).\]
					For $i \in I(\bar{x})$, ${\textbf{G}_{i}}(\bar{x}) = \textbf{0}$. Therefore, $ 0 \in u_i \odot \textbf{G}_{i}(\bar{x})$. If the functions $\textbf{G}_{i}$ for $i \notin I(\bar{x})$ are also $gH$-Hadamard differentiable at $x_0$, then by setting $u_i = 0$ for $i \notin I(\bar{x})$,  the latter part of the theorem is followed.
				\end{proof}

				\begin{theorem}\label{kkts}$($\emph{Extended Karush-Kuhn-Tucker sufficient condition for efficient points}$)$. Let $\mathcal{S}$ be a nonempty convex subset of $\mathcal{X}$; $\textbf{F}: \mathcal{S} \to I(\mathbb{R})$ and $\textbf{G}_i:\mathcal{S} \to I(\mathbb{R}),~ i=1, 2, \cdots, m$ be interval-valued $gH$-Hadamard differentiable convex functions. Suppose that $\bar{x} \in \mathcal{S}$ is a feasible point of the following IOP:
					\begin{equation}
						\left.
						\begin{aligned}
							&~\text{ min }~ \textbf{F}(x) \\
							&~\text{ such that }~ \textbf{G}_i(\bar{x}) \preceq \textbf{0}, \quad i=1, 2, \cdots, m\\
							&~~x\in \mathcal{S}.
						\end{aligned}
						\right\}
					\end{equation}
					If there exist real constants $u_1, u_2, \ldots, u_m$ for which
					\[ \begin{cases}
					\textbf{F}_\mathscr{H}(\bar{x})(v) \oplus \sum_{i=1}^{m} u_i \odot {\textbf{G}_i}_\mathscr{H}(\bar{x})(v) \nprec \textbf{0}, \quad \text{ for all } v \in \mathcal{S},\\
					u_i \odot \textbf{G}_i(\bar{x})=\textbf{0}, ~i= 1, 2,\cdots, m\\
					u_i\geq 0,~i= 1, 2,\cdots, m,
					
					\end{cases}\]
					then $\bar{x}$ is an efficient point of the IOP.
					
				\end{theorem}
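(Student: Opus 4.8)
The plan is to argue by contradiction, adapting the template of the unconstrained sufficient condition (Theorem \ref{tgd}) and carrying the constraint terms through by complementary slackness. Suppose $\bar{x}$ were not efficient. Then there is a feasible point $y \in \mathcal{S}$, so $\textbf{G}_i(y) \preceq \textbf{0}$ for every $i$, with $\textbf{F}(y) \prec \textbf{F}(\bar{x})$. First I would discard the inactive constraints: for $i \notin I(\bar{x})$ we have $\textbf{G}_i(\bar{x}) \neq \textbf{0}$, so $u_i \odot \textbf{G}_i(\bar{x}) = \textbf{0}$ together with $u_i \geq 0$ forces $u_i = 0$. Consequently $\sum_{i=1}^{m} u_i \odot {\textbf{G}_i}_\mathscr{H}(\bar{x})(y - \bar{x}) = \sum_{i \in I(\bar{x})} u_i \odot {\textbf{G}_i}_\mathscr{H}(\bar{x})(y - \bar{x})$, and only the active indices matter.

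The next step is to read off the signs of the directional derivatives from convexity. Arguing exactly as in the convexity estimate (\ref{k}), for $\lambda \in (0, 1]$ one obtains $\textbf{F}(\bar{x} + \lambda(y - \bar{x})) \ominus_{gH} \textbf{F}(\bar{x}) \prec \textbf{0}$; dividing by $\lambda$ and passing to the $gH$-Hadamard limit (which exists by hypothesis, and along which $\textbf{F}$ is $gH$-continuous by Theorem \ref{the1}) gives $\textbf{F}_\mathscr{H}(\bar{x})(y - \bar{x}) \preceq \textbf{0}$. For each active index $i \in I(\bar{x})$ we have $\textbf{G}_i(\bar{x}) = \textbf{0}$, so convexity of $\textbf{G}_i$ and feasibility give $\textbf{G}_i(\bar{x} + \lambda(y - \bar{x})) \preceq \lambda \odot \textbf{G}_i(y) \preceq \textbf{0}$; the same limiting argument yields ${\textbf{G}_i}_\mathscr{H}(\bar{x})(y - \bar{x}) \preceq \textbf{0}$, and since $u_i \geq 0$ the active sum $\textbf{B} := \sum_{i \in I(\bar{x})} u_i \odot {\textbf{G}_i}_\mathscr{H}(\bar{x})(y - \bar{x})$ satisfies $\textbf{B} \preceq \textbf{0}$.

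Now I would splice in the KKT hypothesis. Evaluated along $y - \bar{x}$ it reads $\textbf{F}_\mathscr{H}(\bar{x})(y - \bar{x}) \oplus \textbf{B} \nprec \textbf{0}$; since $\textbf{B} \preceq \textbf{0}$, part (\ref{32}) of Lemma \ref{forfrechet} transfers this to $\textbf{F}_\mathscr{H}(\bar{x})(y - \bar{x}) \nprec \textbf{0}$. Here lies the main obstacle: the convexity step only produced the \emph{non-strict} relation $\textbf{F}_\mathscr{H}(\bar{x})(y - \bar{x}) \preceq \textbf{0}$ (a limit of strict dominations need not be strict), so this does not immediately contradict $\nprec \textbf{0}$. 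I would resolve this by noting that $\preceq \textbf{0}$ and $\nprec \textbf{0}$ can hold together only when $\textbf{F}_\mathscr{H}(\bar{x})(y - \bar{x}) = \textbf{0}$, that is $\underline{f}_\mathscr{D}(\bar{x})(y - \bar{x}) = \overline{f}_\mathscr{D}(\bar{x})(y - \bar{x}) = 0$.

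Finally I would close at this degenerate point as in Case~I of Theorem \ref{tgd}. By Lemma \ref{lc1} the endpoint functions $\underline{f}$ and $\overline{f}$ are convex, so the vanishing of their directional derivatives forces $\underline{f}(y) \geq \underline{f}(\bar{x})$ and $\overline{f}(y) \geq \overline{f}(\bar{x})$; this contradicts $\textbf{F}(y) \prec \textbf{F}(\bar{x})$, which by Definition \ref{interval_dominance} requires at least one endpoint to decrease strictly. The contradiction proves that $\bar{x}$ is efficient. I expect that the only genuinely non-mechanical point is this strict-versus-non-strict gap; routing around it via part (\ref{32}) of Lemma \ref{forfrechet} and the endpoint convexity is the crux, whereas the complementary-slackness bookkeeping and the convexity estimates are routine.
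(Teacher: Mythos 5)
Your proof is correct, but it takes a genuinely different route from the paper's. The paper argues directly, with no contradiction: for any feasible $v$ it invokes the gradient-type inequality (\ref{gc}) of Theorem \ref{eth33} --- $\textbf{F}_\mathscr{H}(\bar{x})(v-\bar{x}) \preceq \textbf{F}(v)\ominus_{gH}\textbf{F}(\bar{x})$, and likewise for each $\textbf{G}_i$ --- and uses part (\ref{31}) of Lemma \ref{forfrechet} to lift the hypothesis $\nprec \textbf{0}$ from the derivative level to the function-difference level; then complementary slackness replaces $\textbf{G}_i(v)\ominus_{gH}\textbf{G}_i(\bar{x})$ by $\textbf{G}_i(v) \preceq \textbf{0}$, and part (\ref{32}) of the same lemma strips the constraint sum away, giving $\textbf{F}(v)\nprec\textbf{F}(\bar{x})$ outright. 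You instead run a contradiction: you push the assumed strict domination down to the derivative level (obtaining only the non-strict $\preceq \textbf{0}$, as you correctly note), meet the KKT hypothesis there via part (\ref{32}), deduce $\textbf{F}_\mathscr{H}(\bar{x})(y-\bar{x})=\textbf{0}$, and close the strict-versus-non-strict gap by descending to the endpoint functions $\underline{f},\overline{f}$ (Lemma \ref{lc1}) --- essentially replaying Case I of Theorem \ref{tgd} with the constraint bookkeeping added. Both arguments share Lemma \ref{forfrechet}(\ref{32}) and complementary slackness; your observation that $u_i=0$ for inactive $i$ is the same fact the paper uses implicitly to write $\textbf{G}_i(v)\ominus_{gH}\textbf{G}_i(\bar{x})=\textbf{G}_i(v)$. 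What the paper's route buys is brevity and staying entirely at the interval level, never decomposing into endpoints; what yours buys is that the hypothesis is only ever used at the single offending direction $y-\bar{x}$, and the degenerate case is resolved by classical convex analysis on $\underline{f}$ and $\overline{f}$, which makes more transparent exactly where strictness can and cannot be preserved. One shared imprecision, inherited from the statement rather than introduced by you: the hypothesis quantifies the direction as $v\in\mathcal{S}$, yet both you and the paper apply it at the direction $v-\bar{x}$ (respectively $y-\bar{x}$), which need not lie in $\mathcal{S}$.
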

				
				\begin{proof}
					By the hypothesis, for every $v\in \mathcal{S}$ satisfying $\textbf{G}_i(v) \preceq \textbf{0}$ for all $i=1, 2,\ldots, m$, we have
					\begin{eqnarray*}
						&&	\textbf{F}_\mathscr{H}(\bar{x})(v-\bar{x}) \oplus \sum_{i=1}^{m} u_i {\textbf{G}_i}_\mathscr{H}(\bar{x})(v-\bar{x}) \nprec \textbf{0},\\
						&\implies& \left(\textbf{F}(v)\ominus_{gH} \textbf{F}(\bar{x})\right) \oplus \left( \sum_{i=1}^{m} u_i \left(\textbf{G}_i(v)\ominus_{gH} {\textbf{G}_i} (\bar{x})\right)\right) \nprec \textbf{0}\\
						&&(\text{ by (\ref{gc}) of Theorem \ref{eth33} and (\ref{31}) of Lemma \ref{forfrechet}}),\\
						&\implies& \left(\textbf{F}(v)\ominus_{gH} \textbf{F}(\bar{x})\right) \oplus \left( \sum_{i=1}^{m} u_i \left(\textbf{G}_i(v)\right)\right) \nprec \textbf{0},\\
						&\implies& \textbf{F}(v)\ominus_{gH} \textbf{F}(\bar{x})\nprec \textbf{0} \text{ from (\ref{32}) of Lemma \ref{forfrechet}},\\
						&\implies& \textbf{F}(v)\nprec \textbf{F}(\bar{x}).
					\end{eqnarray*}
					Hence, $\bar{x}$ is an efficient point of the IOP.
				\end{proof}
				
				
				\section{Application to Support Vector Machines}{\label{svm_application}}
				
				In many classification problems, the data set may not be precise and thus involves uncertainty. This may be due to errors in measurement, implementation, etc. For example, let us assume that we want to predict whether there will be rain tomorrow or not. The data we may require the wind speed, humidity levels, temperature, etc. These variables usually have values in intervals like 10--13 km/hr wind speed, 40--50\% humidity, $30-35^o C$ temperature, etc. The standard Support Vector Machines (SVM) formulation is not applicable for such data as these quantities are interval-valued. Thus, we formulate the SVM problem for the interval-valued data set \[\left\{ (\bm{X}_i, y_i) ~\middle|~ \bm{X}_i \in I(\mathbb{R})^n,~ y_i \in \{-1,1\}, i = 1, 2, \cdots, m \right\}\] by
				\begin{equation}{\label{svmadjust}}
					\left.
					\begin{aligned}
						&~\underset{w,b}{\min} ~F(w, b) = \tfrac{1}{2} \|w\|^2, \\
						&~\text{such that }~ \textbf{G}_{i}(w, b) = [1, 1] \ominus_{gH}y_i \odot \left(w^{\top} \odot \bm{X}_i \oplus b\right) \preceq \bm{0}, ~~i=1,2,\dotsc,m.
					\end{aligned}
					\right\}
				\end{equation}
				We note that the functions $F$ and $\textbf{G}_{i}$ are $gH$-Hadamard differentiable and convex. At $\bar{x}=(\bar{w}, \bar{b})$, in the direction $v=(w, b)$, we have
				\[\textbf{F}_\mathscr{H}(\bar{x})(v)  = w\odot [\bar{w}, \bar{w}]
				\text{ and } {\textbf{G}_i}_\mathscr{H}(\bar{x})(d)  = -\left(w\odot(y_i \odot \bm{X}_i)\oplus by_i\right).\]
				According to Theorem \ref{theorem17}, for an efficient point $(\bar{w}, \bar{b})$ of (\ref{svmadjust}), there exist nonnegative scalars $u_1, u_2, \ldots, u_m$ such that
				\begin{align}
					\label{kkt1} & \textbf{0} \in \left ( w\odot [\bar{w}, \bar{w}]  \oplus \sum_{i=1}^{m} u_i \odot -\left(w\odot(y_i \odot \bm{X}_i)\oplus by_i\right)\right), \\
					\text{ and }~ \label{kkt2}& \bm{0} = u_i \odot {\textbf{G}_{i}(w^*,b^*)}, \quad i = 1,2,\dotsc, m.
				\end{align}
				The condition (\ref{kkt1}) can be simplified as
				\begin{align*}
					\bm{0} \in \left( [w^*, w^*] \oplus \sum_{i=1}^{m} (-u_i y_i)\odot \bm{X}_i \right)
					\text{ and }~\sum_{i=1}^{m} u_i y_i =0.
				\end{align*}
				The data points $\bm{X}_i$ for which $u_i \neq 0$ are called support vectors. By (\ref{kkt2}), corresponding to any $u_i > 0$, we have $\textbf{G}_{i}(w^*,b^*) = \bm{0}$. Thus, corresponding to $w^*$, the value of the bias $b^*$ is such a quantity that $\textbf{G}_{i}(w^*,b^*) = \bm{0}$ for all of those $i \in \{1, 2, \ldots, m\}$ for which $u_i > 0$.
				
				As the functions $F$ and $\textbf{G}_{i}$ are $gH$-Hadamard differentiable and convex, by Theorems \ref{theorem17} and \ref{kkts}, the set of conditions by which we obtain the efficient solutions of the SVM IOP (\ref{svmadjust}) are
				\begin{equation}\label{final_equations}
					\left\{
					\begin{aligned}
						& \bm{0} \in \left( [w, w] \oplus \sum_{i=1}^{m} (-u_i y_i)\odot \bm{X}_i \right), \\
						& \sum_{i=1}^{m} u_i y_i = 0, \\
						& \bm{0} = u_i \odot \textbf{G}_{i}(w,b),~ i = 1,2,\dotsc, m.
					\end{aligned}
					\right.
				\end{equation}
				Corresponding to any of the value of $w$ that satisfies (\ref{final_equations}), we define the set of possible values of the bias by
				\begin{equation*}\label{bias_set}
					\bigcap_{i:~ u_i > 0} \left\{ b ~\middle|~ \textbf{G}_{i}(w,b) = \bm{0}\right\}.
				\end{equation*}
				By using any solution $\bar w$ and $\bar b$ of (\ref{final_equations}) and (\ref{bias_set}), a classifying hyperplane and the SVM classifier function are given by
				\[ \bar w^{\top}\bm{X} + \bar b = \bm{0} \quad \text{and} \quad s^*(\bm{X}) = \text{sign}\left( \bar w^{\top}\bm{X} + \bar b \right),~\text{where sign ($\cdot$) denotes the sign function. }  \]


				\section{Conclusion and Future Directions} \label{sect6}
				In this article, the concept of $gH$-hadamard derivative for IVFs has been studied (Definition \ref{derivative}). One can trivially notice that in the degenerate case, the Definition \ref{derivative} reduces to the respective conventional definition for the real-valued functions (see \cite{Yu2013,D2005}). It has been noticed that the $gH$-Hadamard derivative at any point is the $gH$-Fr\'{e}chet derivative at that point and vise-versa (Theorem \ref{th31}). Also, a $gH$-Hadamard differentiable IVF is found to be $gH$-continuous (Theorem \ref{the1}). It has been shown that the $gH$-Hadamard derivative is helpful to characterize the convexity of an IVF (Theorem \ref{eth33}). It is also observed that the composition of a Hadamard differentiable real-valued function and a $gH$-Hadamard differentiable IVF is $gH$-Hadamard differentiable IVF and the chain rule is applicable (Theorem \ref{th34edd} and Theorem \ref{th4444ed}). Further, for a finite number of IVFs whose values are comparable at each point, it has been proven that the $gH$-Hadamard derivative of maximum of all finite comparable IVFs is the maximum of their $gH$-Hadamard derivative (Theorem \ref{th33dd}).
				
				In addition, it has been shown that if the objective function of an IOP is convex on the feasible set  $\mathcal{S}\subseteq \mathcal{X}$ and $gH$-Hadamard derivative at $\bar{x} \in  \mathcal{S}$ does not dominate to $\textbf{0}$, then $\bar{x}$ is an efficient point of that IOP (Theorem \ref{tgd}). Further, it is proved that if the feasible set $\mathcal{S}$ is linear subspace of $\mathcal{X}$ and the objective function of IOP is $gH$-Hadamard differentiable at an efficient point of IOP, then $gH$-Hadamard derivative does not dominate to $\textbf{0}$ (Theorem \ref{eeth42}) and also contains $0$ (Theorem \ref{eeth42ddd}). Moreover, for constraint IOPs, we have proved extended KKT necessary and sufficient condition to characterize the efficient solutions by using $gH$-Hadamard derivative (Theorem \ref{theorem17} and Theorem \ref{kkts}).

				As an application of the proposed $gH$-Hadamard derivative, we have formulated and solved SVM problem  for interval-valued data.

				In analogy to the current study, future research can be carried out for other generalized directional derivatives for IVFs, e.g., upper and lower Dini semiderivative, Hadamard semiderivative, upper and lower Hadamard semiderivative, Michel-Penot, etc., and their relationships \cite{Delfour2012}.

				In parallel to the proposed analysis of IVFs, another promising direction of future research can be the analysis of the fuzzy-valued functions (FVFs) as the alpha-cuts of fuzzy numbers are compact intervals \cite{Ghoshfuzzy}. Hence, we expect that some results for FVFs can be obtained in a similar way to this paper.

				\appendix
				
				
				\section{Proof of Lemma \ref{forfrechet}} \label{nbe}
				\begin{proof}
					Let $\textbf{A} = [\underline{a}, \overline{a}]$ and $ \textbf{B} = [\underline{b}, \overline{b}]$.\\
					\begin{enumerate}[(i)]
						\item Since $\textbf{B}\nprec \textbf{0}$ and $\textbf{B}\preceq \textbf{A}$, then
						\[\overline{b}\geq 0~\text{and}~\overline{b}\geq \overline{a} \implies \overline{a}\geq 0 \implies \textbf{A}\nprec \textbf{0}.\]
						
						\item Since $\textbf{A} \oplus \textbf{B} \nprec \textbf{0}$ and $\textbf{B} \preceq \textbf{0}$, then
						\[\overline{a}+\overline{b}\geq 0 \text{ and } \overline{b}\leq 0 \implies \overline{a} \geq 0 \implies \textbf{A} \nprec \textbf{0}.\]
					\end{enumerate}
				\end{proof}
				
				\section{Proof of Lemma \ref{nb}}\label{anb}
				\begin{proof}
					\begin{enumerate}[(i)]
						\item If
						\begin{equation}\label{enb1}
							\textbf{F}(x) \nprec \textbf{0} \text{ for all } x \in \mathcal{S},
						\end{equation}
						then due to linearity of $\textbf{F}$, we have
						\begin{equation}\label{enb2}
							\textbf{F}(x)=(-1) \odot \textbf{F}(-x)  \nsucc \textbf{0} \text{ for all } x \in \mathcal{S}
						\end{equation}
						since $\textbf{F}(-x) \nprec \textbf{0}$ by (\ref{enb1}).
						From (\ref{enb1}) and (\ref{enb2}), it is clear that $\textbf{0}$ and $\textbf{F}(x)$ are not comparable.
						
						\item If
						$\textbf{F}(x) \preceq \textbf{0} \text{ for all } x \in \mathcal{S},$
						then due to linearity of $\textbf{F}$, we have
						$\textbf{F}(x)= (-1) \odot \textbf{F}(-x) \succeq \textbf{0} \text{ for all } x \in \mathcal{S}.$\\
						Hence, $\textbf{F}(x)= \textbf{0}$.
					\end{enumerate}
				\end{proof}

				\noindent
				\textbf{Acknowledgement}\\
				In this research, the first author (R. S. Chauhan) is supported by a research scholarship awarded by the University Grants Commission, Government of India. Also, the authors are thankful to Mr. Amit Kumar Debnath, Research Scholar, Department of Mathematical Sciences, IIT (BHU) Varanasi, India, for his valuable suggestions on the present work.

				\section*{Funding}
				Not applicable

				\subsection*{Author Contributions}
				All authors contributed to the study conception and analysis. Material preparation and analysis was performed by Ram Surat Chauhan. The first draft of the manuscript was written by Ram Surat Chauhan and all authors commented on previous versions of the manuscript. All authors read and approved the final manuscript.
				
				\subsection*{Conflicts of interest/Competing interests} 
				The authors declare that they have no known competing financial interests or personal relationships that could have appeared to influence the work reported in this paper.

				\subsection*{Availability of data and material}
				Not applicable

				\subsection*{Code availability}
				Not applicable

			\end{document}